\documentclass{article}
\usepackage[utf8]{inputenc}
\usepackage{amsmath,amssymb,amsthm,mathtools}
\usepackage{booktabs,longtable,array}
\usepackage{enumitem}
\usepackage[colorlinks=true]{hyperref}
\hypersetup{pdftitle={Every graph with no K7= minor is 6-colorable}}

\newtheorem{theorem}{Theorem}[section]
\newtheorem{lemma}[theorem]{Lemma}

\newtheorem{corollary}[theorem]{Corollary}
\newtheorem{observation}[theorem]{Observation}
\newtheorem{conjecture}[theorem]{Conjecture}

\theoremstyle{definition}

\newcommand{\target}{K_7^{=}}
\newcommand{\rhoFour}{\rho_4}
\newcommand{\bd}{\partial}
\newcommand{\Ktwofive}{$K_{2,\downarrow 5}$}
\newcommand{\calC}{\mathcal{C}}
\newcommand{\calS}{\mathcal{S}}
\newcommand{\calT}{\mathcal{T}}
\newcommand{\calL}{\mathcal{L}}
\newcommand{\roots}[2]{#1_{\downarrow #2}}
\newcommand{\dom}{\mathrm{dom}}
\newcommand{\id}{\mathrm{id}}
\newcommand{\vampco}{\ref{thm:VH}-counterexample}
\newcommand{\denco}{\ref{thm:density}-counterexample}

\title{Every graph with no $\target$ minor is $6$-colorable}
\author{Zden\v{e}k Dvo\v{r}\'ak\thanks{Charles University, Prague, Czech Republic.  E-mail: {\tt rakdver@iuuk.mff.cuni.cz}.  Supported by ERC-CZ project LL2328 (Beyond the Four Color Theorem) of the Ministry of Education of Czech Republic.}\\
\and Sergey Norin\thanks{Department of Mathematics and Statistics, McGill University. Email: {\tt sergey.norin@mcgill.ca}. Supported by an NSERC Discovery grant.}
\and Neil Rahman \thanks{Department of Mathematics and Statistics, McGill University. Email: {\tt neil.rahman@mcgill.ca}. Partially supported by an NSERC Discovery grant and supported by an NSERC CGRS M award.}
}
\date{23 August 2026}

\begin{document}
\maketitle

\begin{abstract}
The first open case of Hadwiger's conjecture states that every $K_7$-minor-free
graph is $6$-colorable.  We prove that this is the case for $\target$-minor-free graphs,
where $\target$ denotes the graph obtained from $K_7$ by deleting two
independent edges.  The proof is based on an independently interesting density result:
Every $5$-connected $\target$-minor-free graph with $n\ge 6$ vertices has at most $4n-8$ edges.
\end{abstract}


\section{Introduction and main result}

Well-known Hadwiger's conjecture states that for every positive integer $k$, every $K_{k+1}$-minor-free graph
is $k$-colorable.  Thus, for $k=4$, it provides a strengthening of the Four Color Theorem.
In fact, Wagner's characterization of $K_5$-minor-free graphs~\cite{wagner} shows that the $k=4$ case is equivalent to the Four Color Theorem
(the cases $k\le 3$ are substantially simpler and were proved already by Hadwiger~\cite{hadwiger}).
The case $k=5$ was only resolved in 1993 by Robertson, Seymour, and Thomas~\cite{robertsonseymourthomas},
by an involved argument showing it to be also equivalent to the Four Color Theorem.

However, the next case $k=6$ is widely open.  Indeed, we do not even know whether all $K_7$-minor-free graphs are $7$-colorable,
much less $6$-colorable as postulated by Hadwiger's conjecture (Jakobsen~\cite{jakobsen1971weakenings} and later Albar and Gonçalves~\cite{eightcol} proved that they are $8$-colorable).
Furthermore, even proving weaker statements is challenging.  Let $K_k^-$, $K_k^{\vee}$, and $K_k^=$ denote the graphs obtained from the clique $K_k$ by removing a single edge,
two edges incident with the same vertex, and two independent edges, respectively.  Jakobsen~\cite{jakobsenminus} proved that all $K_7^-$-minor-free graphs are $7$-colorable,
but whether they are $6$-colorable remains open.  Moreover, Jakobsen also proved (in~\cite{jakobsen1971weakenings}) that every graph that contains neither $K_7^{\vee}$ nor $\target$
as a minor is $6$-colorable.  This result was recently improved by Norin and Totschnig~\cite{NorinTotschnig}, who proved that it suffices to exclude $K_7^{\vee}$.
However, they left open the question of whether excluding the other graph ($\target$) as a minor is by itself sufficient.
In this paper, we resolve this question in the affirmative.

\begin{theorem}\label{thm:coloring}
Every $\target$-minor-free graph is $6$-colorable.
\end{theorem}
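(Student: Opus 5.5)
We argue by a minimal counterexample. Let $G$ be a $\target$-minor-free graph that is not $6$-colorable, chosen with $|V(G)|+|E(G)|$ minimal. Since $\target$-minor-freeness is closed under taking minors, every proper minor of $G$ is $6$-colorable, so $G$ is \emph{$7$-contraction-critical}. We use the standard properties of such graphs.
\begin{itemize}
\item Dirac: $G$ has minimum degree at least $7$.
\item Mader: $G$ is $7$-connected, in particular $5$-connected.
\item For every vertex $v$, the neighborhood $G[N(v)]$ has no independent set of size $3$ and is itself highly structured. Otherwise contracting two edges from $v$ to two non-adjacent neighbors would give a proper minor whose $6$-coloring extends.
\end{itemize}

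The core of the argument is the density statement announced in the abstract. Every $5$-connected $\target$-minor-free graph on $n\ge 6$ vertices has at most $4n-8$ edges. Applying it to $G$ gives $|E(G)|\le 4n-8$, so the average degree is below $8$. Hence $G$ has a vertex $v$ of degree exactly $7$. The remaining task is to rule out a degree-$7$ vertex in a $7$-contraction-critical $\target$-minor-free graph.

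For this local step, let $v$ have degree $7$ and $H=G[N(v)]$. The following constraints on $H$ hold.
\begin{itemize}
\item $\alpha(H)\le 2$, since $H$ has no independent set of size $3$.
\item $H$ has minimum degree at least $4$. This is the known property that in a $k$-contraction-critical graph each neighbor of a degree-$k$ vertex has many neighbors inside $N(v)$; Dirac's argument gives $\deg_H(u)\ge 4$ here.
\item $H$ is not $K_7$-like in the sense that it contains no $K_6^{=}$ minor. Together with $v$, such a minor would yield a $\target$ minor, because $v$ is adjacent to everything.
\end{itemize}
A $7$-vertex graph with $\alpha\le 2$ has a complement that is triangle-free, and $\delta(H)\ge 4$ means the complement has maximum degree at most $2$. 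So the complement of $H$ is a disjoint union of paths and cycles of length at least $4$ on $7$ vertices. A short case analysis shows each such $H$ contains $K_6^{=}$ as a minor, or else $H$ is so dense that $H+v$ already contains $\target$ as a subgraph. If some case survives, I would use the $7$-connectivity of $G$ to find, via Menger, vertex-disjoint paths outside $N[v]$ joining pairs of non-adjacent vertices of $H$. Contracting these paths adds the missing edges, producing the required minor; this is the classic Mader/Jakobsen-style rooted-path argument.

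The main obstacle is the density theorem itself, which I would prove separately by its own minimal counterexample. Take a $5$-connected $\target$-minor-free $G$ with more than $4n-8$ edges, minimal in a suitable sense. The plan is as follows.
\begin{itemize}
\item Contract an edge $uv$ lying in few triangles, at most $3$, so that the edge count drops by at most $4$ and the inequality is preserved. Minimality then forces every edge to lie in at least $4$ triangles.
\item Handle small separations by gluing along cliques or near-cliques, which is where the $5$-connectivity hypothesis and the $n\ge 6$ base case are needed.
\item Analyze a minimum-degree vertex, of degree at most $7$, whose neighborhood has minimum degree at least $4$. Show that this neighborhood, plus one or two extra contracted branch sets reached through the connectivity, yields $\target$.
\end{itemize}
The delicate part is keeping $5$-connectivity after contraction. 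I expect a detailed analysis of $5$-separations, likely with small exceptional graphs (for example $K_{2,\downarrow 5}$-type structures) in which the bound is tight.
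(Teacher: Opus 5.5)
\textbf{There is a genuine gap in the local step.} Reducing to the density bound is the right move, but you then try to exclude \emph{every} degree-$7$ vertex, and that step is both stronger than needed and unsupported.

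First, the claim $\delta(H)\ge 4$ for $H=G[N(v)]$ does not follow from contraction-criticality. Dirac's lemma gives only $\alpha(H)\le 2$, and that argument needs three pairwise non-adjacent neighbours identified through $v$, not two. By Kawarabayashi--Toft, a $7$-vertex graph with $\alpha\le 2$ merely contains $K_4$ or the Moser spindle, and both allow vertices of degree $3$ in $H$. In the paper, ``every edge lies in four triangles'' (which is what gives $\delta(H)\ge 4$) is a property of the minimal \emph{density} counterexample, not of $7$-contraction-critical graphs.

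Second, even granting $\delta(H)\ge 4$, your case analysis is false. Take $H=\overline{C_7}$: it has $\alpha(H)=2$ and $\delta(H)=4$. Every edge contraction leaves at most $12<13$ edges and every vertex deletion leaves $10$, so $H$ has no $K_6^{=}$ minor. Also $H+v$ has no $K_7^{=}$ subgraph, since dropping any one of its $8$ vertices leaves at most $16<19$ edges. Your Menger fallback, disjoint paths realizing prescribed non-adjacent pairs, is not something $7$-connectivity provides. What would work there is contracting a component of $G-N[v]$, which by $7$-connectivity sees all of $N(v)$, and then finding a rooted $K_5^-$ in $H$.

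Third, and most fundamentally: when $N(v)\supseteq K_4$, so $v$ lies in a $K_5$, nothing in your argument excludes $v$. The paper does not exclude such vertices either.

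\textbf{The missing idea is to bound the number of degree-$7$ vertices rather than rule them out.} With $\delta(G)\ge 7$ and $|E(G)|\le 4n-8$, there must be at least $16$ vertices of degree $7$. The paper shows there are at most five, in two steps:
\begin{enumerate}
\item Every degree-$7$ vertex lies in a $K_5$. In the Moser spindle case one contracts the path $v_1vv_3$, uses Kempe-adjacency of the remaining five neighbours, and applies the Kriesell--Mohr rooted-cycle lemma to build a $K_7^-$ minor.
\item A $7$-connected $K_7^-$-minor-free graph has at most one $K_5$. It has no $K_6^-$ subgraph, and two $K_5$'s joined by five disjoint paths plus one extra connecting path yield $K_7^-$.
\end{enumerate}
This gives $|E(G)|\ge 4n-2$, which contradicts the density theorem.

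Separately, your sketch of the density theorem names its central difficulty (contraction destroys $5$-connectivity) without resolving it. The paper handles this by proving the stronger $4$-bilight version, using reducible fragments with darts and the vampire/$K_{2,\downarrow 5}$ theorem to control $5$-separations.
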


To understand why this case is more difficult, we need to speak a bit about the proof strategy employed
in all aforementioned results.  First, one uses Kempe chain arguments to restrict the structure of a hypothetical
minimal counterexample.  More precisely, a graph $G$ is \emph{$k$-contraction-critical} if it has chromatic
number $k$ while every proper minor of $G$ is $(k-1)$-colorable.  Clearly, we can focus only on $7$-contraction-critical graphs.
Mader~\cite{Mader1968Connectivity} proved that every such graph other than $K_7$ is $7$-connected, and in particular has minimum degree at least $7$.
Moreover, it is possible to similarly show that a $7$-contraction-critical $K_7^\vee$-minor-free graph can only have very few vertices of degree exactly $7$,
and thus its average degree cannot be much smaller than $8$.  One then obtains a contradiction by using the following density result.
\begin{theorem}[Norin and Totschnig~\cite{NorinTotschnig}]\label{thm:densvee}
Let $G$ be a 4-connected graph not isomorphic to $K_{2,2,2,2}$.  If $G$ has $n\ge 5$ vertices and at least $4n-8$ edges, then
it contains $K_7^{\vee}$ as a minor.
\end{theorem}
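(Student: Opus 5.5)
The natural route is a minimal-counterexample argument driven by contractions. Suppose the statement fails and let $G$ be a counterexample minimizing $|V(G)|$, and subject to that, maximizing $|E(G)|$. Adding edges only makes finding a minor easier, so I may assume that $G$ is edge-maximal without a $K_7^{\vee}$ minor. I would then look at what happens when an edge $uv$ is contracted. The minor $G/uv$ has $n-1$ vertices and loses $1+|N(u)\cap N(v)|$ edges. If $G/uv$ stays $4$-connected, is not $K_{2,2,2,2}$, and has $n-1\ge 5$ vertices, then minimality forces $|E(G/uv)|<4(n-1)-8$. That gives $|N(u)\cap N(v)|\ge 4$ for every such edge. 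So the plan is first to reduce to the case where every edge lies in at least four triangles, or else the contraction breaks $4$-connectivity.

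The second step deals with connectivity. If $G$ has a separation $(A,B)$ of order at most $4$ with both sides nontrivial, I would glue along it. I would complete the separator $S$ to a clique on each side, or contract the other side onto $S$ using $4$-connectivity and linkage through the other side. This produces smaller graphs $G_A$ and $G_B$ that are minors of $G$. I then apply induction, or check the small cases ($n\le 7$, $K_{2,2,2,2}$) directly, and add up the edge counts, $|E(G)|\le |E(G_A)|+|E(G_B)|-\binom{|S|}{2}$. For $|S|=4$ the $-8$ terms combine with the $-6$ of the clique on $S$ to give the bound. The delicate point is when one of the pieces is $K_{2,2,2,2}$ plus attachments, or is too small; those cases have to be handled by hand.

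After these reductions, $G$ is $4$-connected and every contractible edge lies in at least four triangles, so every vertex $v$ has a neighbourhood $G[N(v)]$ of minimum degree at least $4$ (roughly). Take a vertex $v$ of degree at most $7$; one exists since $|E|$ is close to $4n$, and in fact the minimality argument should give degree exactly $\approx 8$ on average. Then $N(v)$ induces a dense graph on at most $7$ vertices with minimum degree $\ge 4$, and $v$ together with $N(v)$ plus connectivity to the rest of $G$ (contracting a component of $G-N[v]$ to a single vertex adjacent to many neighbours) should produce $K_7^{\vee}$: $v$, five or six vertices from $N(v)$, and the contracted outside vertex. I would do a case analysis on the structure of $G[N(v)]$, using Mader-style results on dense neighbourhoods.

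The main obstacle is the last step. I have to show that the local structure (neighbourhoods of minimum degree $4$ on few vertices) always yields the minor, with $K_{2,2,2,2}$ appearing as the unique tight exception. This requires a careful finite case analysis of small graphs, augmented by using the outside component to supply missing adjacencies. I would try first to show that the outside vertex can be made adjacent to all but at most one vertex of $N(v)$, which reduces everything to checking small graphs with minimum degree $4$.
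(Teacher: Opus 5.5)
The paper gives no proof of this statement; it is quoted from Norin and Totschnig. So the natural benchmark is the paper's proof of its own analogue, Theorem~\ref{thm:density}. Measured against that, your plan has a genuine gap, and it sits exactly where the difficulty lies: $4$-connectivity is not preserved by contraction.

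Contracting $uv$ creates a $3$-cut whenever $uv$ lies in a $4$-cut, for example whenever $u$ and $v$ have a common neighbour of degree $4$. Nothing in your setup limits how many edges do. So step~1 gives ``at least four triangles'' only for an uncontrolled set of edges, and the minimum-degree-$4$ neighbourhoods you use in step~3 are not established.

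Step~2, which is supposed to remove the offending $4$-separations, fails twice.
\begin{itemize}
\item Four disjoint paths through $B$ do not yield a rooted $K_4$ on $S=A\cap B$. If $G[B]$ can be drawn in a disc with $S$ on its boundary, then $R_{A,B}$ has no rooted $K_4$ minor: adding a vertex outside the disc adjacent to $S$ would give a planar graph with a $K_5$ minor. So $G[A]$ with $S$ completed to a clique need not be a minor of $G$.
\item Even when both gluings are legitimate and neither piece is $K_{2,2,2,2}$, the count does not close. Induction gives at most $4|X|-9$ edges for each piece, hence
\[
|E(G)|=|E(G_A)|+|E(G_B)|-12+|E(G[S])|\le (4|A|-9)+(4|B|-9)-6=4n-8 .
\]
This is consistent with $|E(G)|\ge 4n-8$ rather than contradicting it. The equality case ($S$ a clique in $G$, both pieces with exactly $4|X|-9$ edges) lies outside your induction hypothesis.
\end{itemize}
Separately, maximizing $|E(G)|$ is the wrong extremal choice. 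Finding a vertex of degree at most $7$ needs $|E(G)|<4n$, which comes from edge-minimality, not maximality.

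What is missing is a strengthening of the statement that survives reductions. In the paper's proof for $K_7^{=}$, $5$-connectivity is replaced by $4$-bilightness: roughly, a consistent choice of a side of non-positive $4$-density for every small separation.
\begin{itemize}
\item Only the sparse side of a small separation is reduced, to a clique or to a dart. Lemma~\ref{lem:dartexists} is needed precisely because a rooted clique may not exist.
\item Lemma~\ref{lemma:preserve-bilight} shows that reducing an inclusion-maximal reducible fragment preserves both the hypothesis and the density bound.
\item Separations of the next order are controlled by a rooted-minor theorem for their heavy side (Theorem~\ref{thm:VH}, leading to Lemma~\ref{lem:not-both-VH} and Corollary~\ref{cor:bifragment-constraint}). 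This is what makes deletion and contraction safe in Lemmas~\ref{lemma:one-edge-delete} and~\ref{lem:one-edge-contract}.
\end{itemize}
For $K_7^{\vee}$ under $4$-connectivity you would need an analogous package one order lower: a reduction-stable relaxation of $4$-connectivity, and a structure result for the heavy side of $4$-separations.

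Your final local step also cannot simply copy Lemmas~\ref{lem:lot-on-five} and~\ref{lem:sepby4}. Contracting an outside component onto five vertices of $N(v)$ that carry a rooted $K_5^-$ produces $K_7^{=}$, not $K_7^{\vee}$, because the missing edge $uv$ and the missing edge of $K_5^-$ are disjoint. There you need a rooted $K_5$ on the attachment set, which can fail (e.g.\ when $|N(v)|=6$), or a genuinely different choice of branch sets.
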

However, Norin and Totschnig also observed that the analogue of this theorem for $\target$ is false.  Specifically, consider the graph obtained from an arbitrarily
large matching by adding four universal vertices.  This graph is $4$-connected, has average degree close to $9$, and yet does not contain $\target$
as a minor, since it is not possible to remove four vertices from $\target$ and obtain (a minor of) a matching.
Norin and Totschnig suggested that this issue could be fixed by assuming 5-connectivity,
which would be (with a bit of an extra effort) sufficient to prove Theorem~\ref{thm:coloring}.
As the main technical contribution of this paper, we provide this missing piece.
\begin{theorem}\label{thm:denstarget}
Every $5$-connected graph with $n\ge 6$ vertices and at least $4n-7$ edges contains $\target$ as a minor.
\end{theorem}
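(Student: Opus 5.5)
We argue by a minimal counterexample: let $G$ be a $5$-connected graph with $n\ge 6$ vertices and $|E(G)|\ge 4n-7$ and no $\target$ minor, chosen with $|V(G)|$ minimum. We may delete edges until $|E(G)|=4n-7$ exactly, provided we keep $5$-connectivity. The standard way to organize this, following the approach behind Theorem~\ref{thm:densvee}, is to prove a stronger, more flexible statement for a wider class than $5$-connected graphs. We then contract edges, keeping track of a density function such as $\rhoFour(G)=4|V(G)|-|E(G)|$. We keep the minimality in the form ``every proper minor $H$ satisfying the connectivity hypothesis has $\rhoFour(H)\ge 8$''.

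\textbf{Local structure via contraction.} Contracting an edge $uv$ loses $1+|N(u)\cap N(v)|$ edges. Suppose $G/uv$ is still $5$-connected, or fits the relaxed hypothesis. Then minimality forces $|N(u)\cap N(v)|\ge 4$ for every such edge. Hence every edge lies in at least $4$ triangles, and the minimum degree is at least $5$. Since the average degree is just below $8$, there is a vertex $v$ of degree at most $7$. Its neighbourhood $G[N(v)]$ then has minimum degree at least $4$ on at most $7$ vertices, which makes it very dense.

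A $\target$ minor arises when such a dense neighbourhood, together with $v$ and one extra branch set coming from $G-N[v]$, gives the required adjacencies. The extra branch set is available because $G$ is $5$-connected, so $G-N[v]$ connects to many neighbours of $v$. The plan is a case analysis over the possible graphs $G[N(v)]$ for $\deg v\in\{5,6,7\}$. In each case we exhibit $\target$ either directly or using a component of $G-N[v]$ attached to at least $5$ vertices of $N(v)$.

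\textbf{Main obstacle: contractions that destroy $5$-connectivity.} The real difficulty is edges whose contraction creates a $4$-cut, or more generally small separations $(A,B)$ in $G$ with $|A\cap B|=5$. I would handle these by strengthening the induction hypothesis. The theorem would be stated for graphs that are $5$-connected except possibly at ``rooted'' separations. Alternatively, on a $5$-separation we glue: replace one side by a small dense gadget, a clique on the separator minus two independent edges, and apply induction to each side. For this to work we need a rooted version of the density bound, in the spirit of the paper's $\roots{K}{}$ notation: a graph with a designated separator $S$ of size $5$ and density $\ge 4n-c$ contains a $\target$ minor, or a rooted minor that combines with the other side into $\target$.

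Proving this rooted lemma is the main work. The exact constants $4n-7$ versus $4n-8$ must be tuned so that the additive losses from the separator edges, about $\binom52=10$ edges counted twice, balance correctly. The example of four universal vertices plus a matching shows why $4$-separations cannot be allowed at all: every step must preserve a $5$-linked structure. In this part I would first try to reduce to the case where every $5$-separation has a side with at most one vertex of degree $5$ inside it, then handle that case with the local neighbourhood analysis above.
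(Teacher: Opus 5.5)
There is a genuine gap. You correctly identify the obstacle, but you defer its resolution ("proving this rooted lemma is the main work"), and the concrete choices you do commit to would not work.

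\textbf{The hypothesis class.} A minimal-counterexample argument cannot be run inside the class of $5$-connected graphs. Two steps leave it. You delete edges to reach $|E(G)|=4n-7$, and your ``average degree just below $8$'' count depends on this equality. You also contract edges lying in at most three triangles. Both operations can create $4$-cuts, so ``provided we keep $5$-connectivity'' cannot be guaranteed.

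Your remark that $4$-separations ``cannot be allowed at all'' is misleading. The paper proves the stronger Theorem~\ref{thm:density} for $4$-bilight graphs: there are no two disjoint $(\le\!4)$-fragments that both have positive $4$-density. This still excludes the matching-plus-four-universal-vertices example, and it can be maintained. A maximal reducible $(\le\!4)$-fragment is replaced by a clique or a dart (Lemma~\ref{lem:dartexists}), and Lemma~\ref{lemma:preserve-bilight} shows this preserves $4$-bilightness without lowering $4$-density.

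Your alternative gadget for $5$-separations, a $K_5^=$ on the separator, does not in general even preserve $5$-connectivity. Two non-adjacent separator vertices that were joined through the removed side can be split by a $4$-cut of the glued graph.

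\textbf{The deferred rooted statement.} It is not a single bound of the form ``density $\ge 4n-c$ with a designated separator''. What is needed are two rooted results that combine to $\target$ across a $5$-separation:
\begin{itemize}
\item Dvo\v{r}\'ak's theorem, which gives a rooted $K_5$ in a $4$-light side of $4$-density at least $\lceil (m+3)/2\rceil$, where $m$ counts non-edges in the separator (Lemma~\ref{lem:D-target});
\item the new Theorem~\ref{thm:VH}: every quite heavy $4$-light $5$-rooted graph has a vampire or \Ktwofive{} as a rooted minor. This needs its own minimal-counterexample proof.
\end{itemize}
Together with the reducible-fragment machinery, these show that the two sides of a $5$-separation or $5$-bifragment cannot both be quite heavy (Lemmas~\ref{lem:not-both-VH} and~\ref{lem:no-nested-VH}, Corollary~\ref{cor:bifragment-constraint}). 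That is exactly what shows $G-e$ remains $4$-bilight, and likewise $G/e$ for an edge in at most three triangles (Lemmas~\ref{lemma:one-edge-delete} and~\ref{lem:one-edge-contract}).

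\textbf{The local step.} Your argument uses $5$-connectivity to get a component of $G-N[v]$ attached to five neighbours of $v$. Once connectivity is relaxed, this is no longer available. The paper instead uses the rooted $K_5^-$ of Lemma~\ref{lem:lot-on-five} to show that every such component has at most four attachments. Using the absence of reducible fragments, it then shows each such component has non-positive $4$-density (Lemma~\ref{lem:sepby4}). This forces $|E(G[N[v]])|\ge 4|N[v]|-7$, which is then ruled out for $\deg v\le 7$ (Corollary~\ref{cor:deg8}).
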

The lower bound from this theorem is close to optimal.  Note that $\target$ contains $K_6$ as a minor,
and thus any $K_6$-minor-free graph is also $\target$-minor-free.  In particular, by adding
a universal vertex to any $5$-connected plane triangulation with $n-1$ vertices, we obtain a $6$-connected $n$-vertex $\target$-minor-free graph with $4n-10$ edges.  Also, note that $K_6$ is a $\target$-minor-free graph with $n=6$ vertices and
$15=4n-9$ edges (though this seems to be an isolated example).  Thus, the following could be true.
\begin{conjecture}
Every $5$-connected graph with $n\ge 7$ vertices and at least $4n-9$ edges contains $\target$ as a minor.
\end{conjecture}
We think that our approach could in principle be used to obtain this strengthening. However, there are several places in our argument
where increasing the constant from $7$ to $9$ would require substantial additional effort, and since the exact constant
is not important for the proof of Theorem~\ref{thm:coloring}, we opted not to attempt this.
Let us also remark that there does not seem to be any fundamental obstruction that would prevent our
approach from working for $K_7^-$-minor-free graphs instead of $\target$-minor-free ones
(though of course a number of technical issues would have to be worked out).  Hence, we in particular believe the following
could be true (likely even with $2$ replaced by a slightly larger constant).
\begin{conjecture}\label{conj:k7minus}
Every $5$-connected graph with $n\ge 6$ vertices and at least $4n-2$ edges contains $K_7^-$ as a minor.
\end{conjecture}
This would be sufficient to prove that $K_7^-$-minor-free graphs are $6$-colorable.
Indeed, our proof of Theorem~\ref{thm:coloring} is by combination of Theorem~\ref{thm:denstarget}
with the following result.
\begin{theorem}\label{thm:counterdens}
Let $G$ be a $K_7^-$-minor free graph of chromatic number at least seven.  If every proper minor of $G$
is $6$-colorable, then $G$ is $7$-connected and $|E(G)|\ge 4|V(G)|-2$.
\end{theorem}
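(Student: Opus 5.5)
The plan is to get $7$-connectivity directly from Mader's theorem and then obtain the edge bound by ruling out too many vertices of degree exactly $7$.

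\textbf{Connectivity.} The hypotheses say that $G$ is $7$-contraction-critical. $G$ is not $K_7$, since $K_7$ contains $K_7^-$. Mader's theorem~\cite{Mader1968Connectivity} then gives that $G$ is $7$-connected, and in particular $\delta(G)\ge 7$.

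\textbf{Reduction to a degree count.} Write $n_7$ for the number of vertices of degree $7$. The bound $|E(G)|\ge 4n-2$ is equivalent to
\[
\sum_v (\deg v-8)\ge -4 .
\]
So it suffices to show that vertices of degree $7$ are rare, or are compensated by vertices of degree at least $9$ near them.

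\textbf{Local structure at a degree-$7$ vertex.} Let $v$ have degree $7$. I would use the standard contraction-critical facts.
\begin{itemize}
\item $\alpha(G[N(v)])\le 2$. If $x,y,z\in N(v)$ were pairwise nonadjacent, contract the connected set $\{v,x,y,z\}$ and $6$-color the resulting proper minor. Expanding back, $N(v)$ sees at most $5$ colors, so $v$ could be colored.
\item For every $u\in N(v)$, $u$ is nonadjacent to at most two vertices of $N(v)-u$. Contracting $uv$ and using a Kempe-chain argument shows that $u$ and $v$ have many common neighbours. Hence the complement $\overline{G[N(v)]}$ is triangle-free with maximum degree at most $2$, so it is a disjoint union of paths and cycles of length at least $4$.
\end{itemize}
This means $G[N[v]]$ is nearly $K_8$, missing only a sparse set of edges.

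I would then use $7$-connectivity of $G$ and Menger's theorem to route vertex-disjoint paths through $G-N[v]$ between nonadjacent pairs of $N(v)$. Contracting these paths adds the missing edges. Choosing $v$ as one branch vertex and contracting suitable edges inside $N(v)$ yields a $K_7^-$ minor. The only exceptions are very rigid configurations, such as $\overline{G[N(v)]}$ being a $C_7$ or a long cycle with $G-N[v]$ attached in a constrained way. I expect this case analysis to be modest, in the spirit of Jakobsen's arguments.

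\textbf{Main obstacle: the global count.} The local analysis alone does not bound $n_7$. I would show the following.
\begin{enumerate}
\item Two degree-$7$ vertices cannot be adjacent. If $u,v$ are adjacent of degree $7$, their neighbourhoods overlap in at least $5$ vertices, and together with the structure of $\overline{G[N(v)]}$ this forces a $K_7^-$ minor.
\item Any two degree-$7$ vertices with heavily overlapping neighbourhoods also force a $K_7^-$ minor.
\end{enumerate}
After this, a discharging scheme should work. Each degree-$7$ vertex takes charge from its neighbours of degree at least $9$, which must exist in the exceptional configurations; otherwise a minor appears. If instead $N(v)$ consists of degree-$8$ vertices, I would contract $v$ into different neighbours and use the resulting colorings to find a $K_7^-$ minor spanning several neighbourhoods.

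If full discharging proves too hard, I would instead bound $n_7\le 4$ directly. Given five degree-$7$ vertices, which are pairwise nonadjacent by item~1, I would use $7$-connectivity to link their neighbourhoods into a $K_7^-$ model. Making this linkage argument work uniformly is where I expect most of the effort to go.
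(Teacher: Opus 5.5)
\paragraph{Verdict: there is a genuine gap.} Your connectivity step is fine. Strictly, you should add that $\chi(G)=7$ because $G-v$ is $6$-colorable, so $G$ really is $7$-contraction-critical and $G\neq K_7$. The reduction to counting degree-$7$ vertices is also fine. In fact, since $\sum_v(\deg v-8)=2|E(G)|-8|V(G)|$ is even, $n_7\le 5$ already suffices, so you are aiming at a stronger bound than needed.

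The gap is that the step carrying all the content, bounding $n_7$, is never argued. ``A discharging scheme should work'' and ``link their neighbourhoods into a $K_7^-$ model'' are hopes, not arguments. The local facts you want to build on are also unsupported:
\begin{enumerate}
\item Contracting $uv$ and running Kempe chains only shows that $u$ is joined to each uniquely coloured vertex of $N(v)\setminus\{u\}$ by a two-coloured path. That is Kempe-adjacency, not adjacency, so it produces no common neighbours.
\item Your claim that $\overline{G[N(v)]}$ has maximum degree at most $2$ would exclude the Moser spindle as $G[N(v)]$ without using the minor hypothesis at all (six of its seven vertices have three non-neighbours). That is exactly the hard case, and the paper needs $K_7^-$-freeness to dispose of it.
\item Nothing in your argument rules out $G[N(v)]$ being the disjoint union of $K_3$ and $K_4$. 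This configuration is consistent with the true structure: $v$ together with the $K_4$ is a $K_5$.
\item ``Two degree-$7$ vertices cannot be adjacent'' comes with no argument and points the wrong way. In the actual structure all degree-$7$ vertices lie in a single $K_5$, so they are pairwise adjacent.
\end{enumerate}

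\paragraph{What the paper uses instead.} The missing ideas are two structural lemmas.

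First, every degree-$7$ vertex $v$ lies in a $K_5$.
\begin{itemize}
\item By Dirac, $\alpha(G[N(v)])\le 2$, so by Kawarabayashi--Toft $G[N(v)]$ contains $K_4$ or a Moser spindle.
\item In the spindle case, contract the path $v_1vv_3$ and take a $6$-colouring. Then $v_2,v_2',v_4,v_4',v_5$ must be pairwise Kempe-adjacent.
\item Apply the Kriesell--Mohr lemma to the subgraph coloured $1,\dots,4$. This gives $K_4$ rooted at $\{v_2,v_2',v_4,v_4'\}$.
\item Contracting $v_1v_5$ then yields $K_6^-$ on $N(v)\setminus\{v_5\}$, hence $K_7^-$ together with $v$.
\end{itemize}

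Second, $G$ has at most one $K_5$.
\begin{itemize}
\item $7$-connectivity rules out a $K_6^-$ subgraph: contract $G-W$ to a single vertex.
\item Two distinct $K_5$'s are joined by five disjoint paths (Menger), plus one extra path in $G-Z$. Contracting these gives $K_7^-$.
\end{itemize}

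Together these give $n_7\le 5$, hence $|E(G)|\ge\lceil(8|V(G)|-5)/2\rceil=4|V(G)|-2$.
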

By using Conjecture~\ref{conj:k7minus} instead of Theorem~\ref{thm:denstarget}, we would get the desired
result on $6$-colorability of $K_7^-$-minor-free graphs.
This would bring us one step closer to Hadwiger's conjecture for $K_7$-minor-free graphs.
However, let us remark that a density result analogous to Theorem~\ref{thm:denstarget} is false for $K_7$-minor-free
graphs.  Indeed, graphs obtained from $5$-connected $(n-2)$-vertex plane triangulations by adding two universal vertices
are $K_7$-minor-free, $7$-connected, and have $5n-15$ edges.  Hence, making the ``final step''
from $K_7^-$-minor-free graphs to $K_7$-minor-free graphs would be substantially more difficult.

Let us now briefly describe the organization of the paper.
Section~\ref{sec:density-setup} is devoted to summarizing the definitions and previous results
used throughout the paper, and stating a refined version of Theorem~\ref{thm:denstarget}
that we need to make induction work.  Section~\ref{sec:redu} is devoted to an auxiliary result
that enables us to eliminate certain $(\le\!4)$-separations in our arguments.

The proof of Theorem~\ref{thm:denstarget} is heavily influenced by the bound on the density
of graphs without rooted $K_5$-minor, which was recently obtained by the first author~\cite{Dvorak},
and follows a similar structure.  We actually need a strengthening of one of the important
technical tools from this paper:  Each 5-rooted graph of sufficient density contains
as a minor a 7-vertex graph containing the five roots which is obtained from $K_7$ by removing
the edges between the roots and at most two additional edges forming a matching.
This is obviously useful when dealing with 5-separations in a hypothetical counterexample to Theorem~\ref{thm:denstarget},
since we then only need to show that the other part of the 5-separation can be contracted to $K_5$ on the roots
(and we apply the result of Dvořák~\cite{Dvorak} to do so).  We obtain this strengthening in Section~\ref{sec:vampire}.

Further following the approach of~\cite{Dvorak}, we actually prove a stronger version
of Theorem~\ref{thm:denstarget} which relaxes the 5-connectivity assumption to something easier to maintain:
Each $(\le\!4)$-separation has a ``light'' side, and the orientation towards the light side is consistent
in nested $(\le\!4)$-separations (see Theorem~\ref{thm:density} for an exact statement).
We then study a hypothetical minimal counterexample to this stronger theorem.
In Section~\ref{sec:five}, we apply the results of Section~\ref{sec:vampire}
to show that each 5-separation in a minimal counterexample has a ``nearly light side'' consistent
with the light or nearly light sides of nested $(\le\!5)$-separations.
The argument is finished in Section~\ref{sec:density-final}
by first arguing that each edge must be contained in many triangles (as otherwise contracting it would lead to a smaller
counterexample), and then using this to find $\target$ as a minor in a neighborhood of a small degree vertex.

The final Section~\ref{sec:coloring} is devoted to deriving our main coloring result (Theorem~\ref{thm:coloring})
from Theorem~\ref{thm:denstarget}.

\subsection{AI usage}

The main ideas of the argument (including the statements of all major results and key technical
lemmas) were provided by the co-authors.  AI was used to obtain the proofs of these results
based on an outline of the expected approach; additional guidance in the form of suggestions
for intermediate lemmas was needed several times. Several ideas present in the final paper originate from AI suggestions, namely:
\begin{itemize}
\item We originally thought that just assuming that each $(\le\!4)$-separation has a light side would be sufficient to prove the
stronger form of Theorem~\ref{thm:density}.  The AI's struggle with proving the necessary technical statements lead us to realize that
something might be wrong, and eventually we came up with the additional consistency assumption.
\item The analysis of separations after contracting part of the graph to a dart (Lemma~\ref{lemma:simplify-dense-bifragment}) was provided
by AI (and was somewhat more elegant than what we have thought of).
\item Lemma~\ref{lem:components} on neighborhoods of roots was suggested by AI and simplifies several technical issues later in the proof.
\item Lemma~\ref{lem:three-edge} was suggested by AI (but we completely replaced its proof by a simpler one).
\item A weaker form of Lemma~\ref{lem:lot-on-five} (with $K_5^=$ instead of $K_5^-$) was suggested by AI and inspired the current one
(though the proof is our own and significantly less technical than what AI proposed).
\item The idea for handling small separations in the final part of the argument (Lemma~\ref{lem:sepby4}) comes from AI.
\end{itemize}
AI also provided an initial writeup of the results, which we then heavily
edited, more than halving the  length of the paper. Finally, AI was used for
proofreading. Overall, the contribution of AI was analogous to what could be
expected from a strong Master's student. We considered listing it as a
co-author, but to match the prevailing current practice, we decided against it.

\section{Preliminaries}
\label{sec:density-setup}

Let us start by introducing definitions and auxiliary results that we need
throughout the paper.
For graphs $H$ and $G$, a \emph{model} of $H$ in $G$ is a function $\mu$ that
\begin{itemize}
\item maps vertices of $H$ to pairwise disjoint non-empty subsets of vertices of $G$ which induce connected subgraphs of $G$, and
\item maps each edge $e=uv$ of $H$ to an edge $\mu(e)$ of $G$ with one end in the set $\mu(u)$ and the other end in the set $\mu(v)$.
\end{itemize}
If there exists a model of $H$ in $G$, we say that $H$ is a \emph{minor} of $G$.
Equivalently, $H$ is a minor of $G$ if a graph isomorphic to $H$ can be obtained from a subgraph of $G$ by repeatedly contracting edges.

We are also going to consider a rooted version of this concept.  A \emph{rooted graph} is a graph $G$ together with a set $X_G$ of its vertices.
When $|X_G|=m$, we also say that the graph is \emph{$m$-rooted} to point out the number of root vertices.
For a graph $F$ without roots and a set $Z\subseteq V(F)$,
let $\roots{F}{Z}$ denote the rooted graph with the underlying graph $F$ and with the root set $Z$.

Consider a graph $H$ (without roots), a set $Y\subseteq V(H)$ of size $|X_G|$, and a bijection $\pi:Y\to X_G$.
A \emph{$\pi$-rooted model} of $H$ in $G$ is a model $\mu$ of $H$ in $G$ such that $\pi(v)\in \mu(v)$ holds for every $v\in \dom(\pi)=Y$.
If such a model exists, then we say that $H$ is a \emph{$\pi$-rooted minor} of $G$.
In other words, a graph isomorphic to $H$ via an isomorphism extending $\pi$ can be obtained from a subgraph of $G$ containing $X_G$
by repeatedly contracting edges with at least one non-root end.
We also use these definitions in the case that $H$ is a rooted graph, in which case we require that $\dom(\pi)=X_H$
(and in particular $|X_H|=|X_G|$).

When $H$ is either an unrooted graph with exactly $|X_G|$ vertices or an $|X_G|$-rooted graph,
by saying that $H$ is a \emph{rooted minor} of $G$ we mean that there exists $\pi$ such that $H$ is a $\pi$-rooted minor of $G$.
In particular, we use this convention when $H$ is a clique of size $|X_G|$, since in this case the exact bijection $\pi$ of course does not matter.

In case that $X_G\subseteq V(H)$, we usually map the roots to themselves; more precisely, we let $\id$ denote the partial
function $V(H)\to X_G$ such that $\dom(\id)=X_G$ and $\id(x)=x$ for every $x\in X_G$, and we refer to $H$ being an \emph{$\id$-rooted minor} of $G$.
Let us remark that when we apply this definition for a rooted graph $H$, we require that $X_H=X_G$.

The rooted graphs and minors naturally arise in the context of separations.
A \emph{separation} of a graph $H$ is an ordered pair $(A,B)$ of vertex sets such that
$V(H)=A\cup B$ and no edge of $H$ has one end in $A\setminus B$ and the other end in $B\setminus A$.
The \emph{order} of the separation $(A,B)$ is $|A\cap B|$, and we say that the separation is \emph{proper}
if $A\setminus B\neq \emptyset\neq B\setminus A$.  We also refer to a separation of order exactly $k$
as a \emph{$k$-separation}, and to one of order at most $k$ as an \emph{$(\le\!k)$-separation}.
Note that a graph with at least $k+1$ vertices is $k$-connected if and only if it has no proper $(\le\!(k-1))$-separation.
When $H$ is a rooted graph, we say that a separation $(A,B)$ of $H$ is a \emph{root separation} when $X_H\subseteq A$.
A rooted graph $H$ is \emph{internally $k$-connected} if it has no proper root $(\le\!(k-1))$-separation.

The \emph{right-hand side} of a separation $(A,B)$ of a (rooted or unrooted) graph $H$ is the rooted graph $R^H_{A,B}=\roots{H[B]}{(A\cap B)}$.
When the graph $H$ is clear from the context, we drop the superscript and refer to the right-hand side of the separation $(A,B)$
just by $R_{A,B}$.  Let us remark that  the right-hand side of any separation of a $k$-connected graph is internally $k$-connected.
The following fact motivates the definition of a rooted minor.
\begin{observation}\label{obs:seprepl}
Let $G$ be a graph, let $(A,B)$ be a separation of $G$, and let $H$ be a graph with $A\cap B\subseteq V(H)$ and otherwise disjoint from $A$.
If $H$ is an $\id$-rooted minor of $R^G_{A,B}$, then $G[A]\cup H$ is a minor of $G$.
Moreover, for any $Z\subseteq A$, the rooted graph $\roots{(G[A]\cup H)}{Z}$ is an $\id$-rooted minor of the rooted graph $\roots{G}{Z}$.
\end{observation}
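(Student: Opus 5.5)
The idea is to take the model $\mu$ of $H$ in $R^G_{A,B}$ witnessing that $H$ is an $\id$-rooted minor, and extend it by the identity on $G[A]$. Set $S=A\cap B$. Define $\mu'$ on $V(G[A]\cup H)=(A\setminus S)\cup V(H)$ as follows. For $v\in A\setminus S$ put $\mu'(v)=\{v\}$. For $v\in V(H)$ put $\mu'(v)=\mu(v)$.

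We first check that the branch sets are valid. Each $\mu(v)\subseteq B$, and the sets $\mu(v)$ are pairwise disjoint and induce connected subgraphs of $G[B]$, hence of $G$. The singletons $\{v\}$ for $v\in A\setminus S$ are disjoint from $B$, so all branch sets are pairwise disjoint and connected. Because the model is $\id$-rooted, $x\in\mu(x)$ for every $x\in S$.

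Next we map the edges. An edge $e$ of $H$ is mapped to $\mu(e)$, which is an edge of $G[B]$ with ends in the correct branch sets. An edge $uv$ of $G[A]$ has two possible types. If both ends lie in $A\setminus S$, map it to itself. If $u\in A\setminus S$ and $v\in S$, map it to the edge $uv$ itself, which joins $\{u\}$ to $\mu'(v)=\mu(v)\ni v$. If both ends lie in $S$, then $uv$ is an edge of $G[A]\cup H$ with $u,v\in V(H)$, and we map it to $uv\in E(G)$, which joins $\mu(u)\ni u$ and $\mu(v)\ni v$. In the case where $uv$ is also an edge of $H$, the union has only one such edge and either choice works. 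Hence $\mu'$ is a model of $G[A]\cup H$ in $G$.

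For the moreover part, take $Z\subseteq A$. Each $z\in Z$ satisfies $z\in\mu'(z)$: if $z\in A\setminus S$ this holds because $\mu'(z)=\{z\}$, and if $z\in S$ it holds because $z\in\mu(z)$. So $\mu'$ is an $\id$-rooted model of $\roots{(G[A]\cup H)}{Z}$ in $\roots{G}{Z}$.

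There is no real obstacle here. The only point needing care is the edges of $G[A]$ inside $S$, which are handled because the rootedness places each $x\in S$ in its own branch set $\mu(x)$.
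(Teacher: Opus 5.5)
Your proof is correct. The paper states this observation without proof as immediate from the definitions, and extending the $\id$-rooted model $\mu$ of $H$ in $R^G_{A,B}$ by singleton branch sets on $A\setminus B$ is the routine verification it implicitly relies on. The two points that need care are both handled properly: the new singletons are disjoint from $B\supseteq\bigcup_v\mu(v)$, and edges of $G[A]$ with both ends in $A\cap B$ are realized because $x\in\mu(x)$ for every $x\in A\cap B$.
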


Let $G$ be a rooted graph and let $(A,B)$ be a root separation of $G$.
An \emph{isolator} of $(A,B)$ is a root separation $(C,D)$ of minimum order subject to $C\subseteq A$ and
$B\subseteq D$.  The common order $k$ of the isolators is the \emph{root connectivity} of $(A,B)$.
The separation $(A,B)$ is \emph{linked to the roots} if its root connectivity is $|A\cap B|$.
Let us remark that when $(C,D)$ is an isolator of $(A,B)$, then $(A\cap D,B)$ is a root separation of $R_{C,D}$ whose root connectivity is exactly
$|C\cap D|$.

By Menger's theorem, the root connectivity of a root separation $(A,B)$ is equal to the maximum number of
pairwise vertex-disjoint paths from $X_G$ to $B$ in $G$.  Note that we can choose the paths
so that
\begin{itemize}
\item each of the paths intersects $X_G$ only in its first vertex,
\item each of the paths intersects $B$ only in its last vertex (and this vertex belongs to $A\cap B$), and
\item for each vertex $x\in B\cap X_G$, one of the paths consists only of $x$.
\end{itemize}
We say such a system of $k$ paths is a \emph{root linkage} of $(A,B)$, and we call the ends of the paths in $A\cap B$
the \emph{terminators} of the root linkage.
Let us note the following observation proved by contracting the paths of a root linkage.
\begin{observation}\label{obs:linkminor}
Let $G$ be a $k$-rooted graph, let $(A,B)$ be a root separation of $G$ whose root connectivity is $k$, and let $Z$ be the set of terminators of a root linkage of $(A,B)$.
Then $\roots{G[B]}{Z}$ is a rooted minor of $G$.  In particular, if $(A,B)$ is linked to the roots (that is, $k=|A\cap B|$), then $R_{A,B}$ is a rooted minor of $G$.
\end{observation}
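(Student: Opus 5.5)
We prove Observation~\ref{obs:linkminor} by taking a root linkage $P_1,\dots,P_k$ of $(A,B)$ and contracting each path onto its terminator. By the choice of root linkage, each $P_i$ starts at a root $x_i\in X_G$, meets $X_G$ only in $x_i$, meets $B$ only in its last vertex $z_i\in A\cap B$, and $P_i$ is the single vertex $x_i$ whenever $x_i\in B$. Since the root connectivity equals $k=|X_G|$, every root is the start of exactly one path, and the terminators $z_1,\dots,z_k$ are distinct.

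We define a model $\mu$ of $G[B]$ in $G$ as follows:
\begin{itemize}
\item $\mu(z_i)=V(P_i)$ for each $i$;
\item $\mu(v)=\{v\}$ for every other vertex $v\in B$.
\end{itemize}
For each edge $uv$ of $G[B]$, we set $\mu(uv)=uv$, which is valid since $u\in\mu(u)$ and $v\in\mu(v)$.

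We then check that $\mu$ is a model. Each set $V(P_i)$ induces a connected subgraph because it contains the path $P_i$. The sets are pairwise disjoint for two reasons. First, the paths are vertex-disjoint. Second, $V(P_i)\cap B=\{z_i\}$, so no path meets a singleton $\{v\}$ with $v\in B\setminus Z$.

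Finally, the model is rooted correctly: taking $\pi(z_i)=x_i$ gives $x_i\in V(P_i)=\mu(z_i)$, so $\mu$ is a $\pi$-rooted model of $\roots{G[B]}{Z}$ in $G$. The rooting convention requires $\dom(\pi)=Z$ with $|Z|=k=|X_G|$, which holds. Hence $\roots{G[B]}{Z}$ is a rooted minor of $G$.

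For the second statement, suppose $k=|A\cap B|$. Every terminator lies in $A\cap B$ and there are $k$ distinct terminators, so $Z=A\cap B$. Therefore $\roots{G[B]}{Z}=R_{A,B}$, and the claim follows.

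There is no real obstacle here. The only points needing care are that the path interiors avoid $B$ (which keeps the branch sets disjoint from the other vertices of $B$) and that $Z$ has exactly $|X_G|$ elements (which makes the rooting bijection well defined). Both come directly from the conventions fixed for a root linkage.
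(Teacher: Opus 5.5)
Your proof is correct and matches the paper's argument: the paper justifies this observation in one line by contracting the paths of a root linkage onto their terminators, and your model $\mu$ is exactly that contraction written out. Your two checks, disjointness via $V(P_i)\cap B=\{z_i\}$ and the bijection $\pi(z_i)=x_i$ via $|X_G|=k$, correctly supply the routine details the paper leaves implicit.
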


Dvořák~\cite{Dvorak} gives an optimal sufficient condition on the existence of $K_5$ as a rooted minor in an internally 5-connected 5-rooted graph.
Note that a connectivity assumption is necessary: A 5-rooted graph can contain an arbitrarily dense component disjoint from the roots
and still be rooted $K_5$-minor-free.  However, full 5-connectivity actually is not needed (and indeed, relaxing it is necessary for
Dvořák's argument to go through, since maintaining 5-connectivity in induction would be very difficult): It suffices to assume that
each root $(\le\!4)$-separation has ``sparse'' right-hand side.  More precisely, for a rooted graph $G$,
let us define
\begin{align*}
 n(G)&=|V(G)\setminus X_G|\\
 \rho(G)&=|E(G)\setminus E(G[X_G])|\\
 \rhoFour(G)&=\rho(G)-4n(G).
\end{align*}
We say that $\rhoFour(G)$ is the \emph{$4$-density} of $G$.  Note that the $4$-density of $G$ is unaffected by the edges between roots.
A $k$-rooted graph $G$ is \emph{$4$-light} if $\rhoFour(R^G_{A,B})\le 0$ holds for every root $(\le\!(k-1))$-separation of $G$.
Note that an internally $k$-connected $k$-rooted graph is automatically $4$-light, since every root $(\le\!(k-1))$-separation $(A,B)$ of $G$
satisfies $B\subseteq A$, and thus its right-hand side has $4$-density $0$.  Moreover, as we have alluded to earlier, $4$-lightness
is significantly easier to maintain in inductive arguments than internal $5$-connectivity.

Let us note two simple observations on $4$-density.
For a rooted graph $G$ and a vertex $v\in V(G)\setminus X_G$, let $\deg^X_G v$ denote the number of neighbors of $v$ in $X_G$.
\begin{observation}\label{obs:4sum}
For every rooted graph $G$,
$$\rhoFour(G)=\frac{1}{2}\sum_{v\in V(G)\setminus X_G} (\deg_G v+\deg^X_G v-8).$$
\end{observation}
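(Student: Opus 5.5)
We prove Observation~\ref{obs:4sum} by double counting the edges counted in $\rho(G)$, that is, the edges of $G$ with at least one end outside $X_G$. We split these edges into two classes: the set $E_1$ of edges with both ends in $V(G)\setminus X_G$, and the set $E_2$ of edges with exactly one end in $V(G)\setminus X_G$ (the other end being a root). Edges with both ends in $X_G$ are exactly those of $E(G[X_G])$ and are excluded, so $\rho(G)=|E_1|+|E_2|$.

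Next we sum degrees over the non-root vertices. For each $v\in V(G)\setminus X_G$, the degree $\deg_G v$ counts every edge of $E_1$ at $v$ once and every edge of $E_2$ at $v$ once. Hence
$$\sum_{v\in V(G)\setminus X_G}\deg_G v = 2|E_1|+|E_2|,$$
because each edge of $E_1$ has two non-root ends and each edge of $E_2$ has exactly one. Every edge of $E_2$ is incident with exactly one non-root vertex $v$ and is counted in $\deg^X_G v$ for that $v$ only. Therefore $\sum_{v\in V(G)\setminus X_G}\deg^X_G v=|E_2|$. Adding the two identities gives
$$\sum_{v\in V(G)\setminus X_G}\bigl(\deg_G v+\deg^X_G v\bigr)=2|E_1|+2|E_2|=2\rho(G).$$

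Finally, subtracting $8$ for each of the $n(G)$ terms gives $\sum_{v\in V(G)\setminus X_G}(\deg_G v+\deg^X_G v-8)=2\rho(G)-8n(G)=2\rhoFour(G)$. Dividing by two yields the claimed formula. The argument is routine bookkeeping. The only point needing care is that the $\deg^X_G v$ term compensates for root–nonroot edges being seen from one side only; we check this with the per-edge counts above.
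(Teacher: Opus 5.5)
Your proof is correct and follows the paper's argument: both show that $\sum_{v\in V(G)\setminus X_G}(\deg_G v+\deg^X_G v)$ counts each edge with a non-root end exactly twice, so it equals $2\rho(G)$. Your split into edges with two non-root ends and edges with one root end just spells out the per-edge count that the paper states in one line.
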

\begin{proof}
To see this, we only need to argue that $\rho(G)=\tfrac{1}{2}\sum_{v\in V(G)\setminus X_G} (\deg_G v+\deg^X_G v)$.
This is the case, since the sum counts each edge of $G$ with at least one non-root end exactly twice.
\end{proof}
Next, let us consider the effect of edge contraction on $4$-density.
Let $e=uv$ be an edge of a rooted graph $G$, where $v\not\in X_G$, and let $S$ be the set of all common neighbors of $u$ and $v$ in $G$.
We let
$$t_G(e)=\begin{cases}
|S|&\text{if $u\not\in X_G$}\\
|S\setminus X_G|+\deg^X_G v - 1 &\text{if $u\in X_G$.}
\end{cases}$$
That is, $t_G(e)$ is the number of common neighbors of $u$ and $v$ in the graph obtained from $G$ by adding all edges between its roots.

\begin{observation}\label{obs:contrdens}
Let $G$ be a rooted graph and let $e=uv$ be an edge of $G$.  If $v\not\in X_G$, then
$$\rhoFour(G/e)=\rhoFour(G)+3-t_G(e).$$
\end{observation}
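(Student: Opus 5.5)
The plan is a direct count: compare $n(\cdot)$ and $\rho(\cdot)$ before and after contracting $e$, splitting according to whether $u$ is a root. Throughout, $G/e$ is the simple graph obtained by identifying $u$ and $v$ into a single vertex $w$, deleting the loop and suppressing parallel edges. The vertex $w$ is a root of $G/e$ exactly when $u\in X_G$; in that case we identify $w$ with $u$, so $X_{G/e}=X_G$.

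\emph{The term $n$.} Since $v\notin X_G$, the non-root vertices of $G/e$ are those of $G$ with $v$ removed (and with $u$ replaced by $w$ when $u\notin X_G$). Hence $n(G/e)=n(G)-1$ in both cases, and the term $-4n$ contributes $+4$ to $\rhoFour(G/e)-\rhoFour(G)$. It therefore suffices to prove $\rho(G/e)=\rho(G)-1-t_G(e)$, that is, that exactly $1+t_G(e)$ counted edges disappear.

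\emph{Case $u\notin X_G$.} Here $w$ is not a root, so every edge of $G/e$ incident with $w$ is counted in $\rho$, as was its preimage in $G$. Edges not incident with $u$ or $v$ are unaffected. The only losses are the edge $e$ itself and, for each common neighbour $x\in S$, one of the two edges $ux,vx$, which merge into $wx$. So $\rho$ drops by exactly $1+|S|=1+t_G(e)$.

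\emph{Case $u\in X_G$.} Now $w=u$ is a root. Edges incident with $u$ in $G$ keep their status: counted if the other end is not a root, uncounted otherwise. I will go through the edges at $v$ one at a time; each was counted in $G$, since $v\notin X_G$.
\begin{itemize}
\item The edge $e$ disappears.
\item An edge $vx$ with $x\in X_G\setminus\{u\}$ becomes, or merges into, the root--root edge $ux$. It is therefore no longer counted, whether or not $x\in S$. There are $\deg^X_G v-1$ such edges.
\item An edge $vx$ with $x\in S\setminus X_G$ merges with the counted edge $ux$, so exactly one counted edge is lost for each such $x$.
\item An edge $vx$ with $x\notin S\cup X_G$ survives as the counted edge $ux$.
\end{itemize}
The total loss is $1+(\deg^X_G v-1)+|S\setminus X_G|=1+t_G(e)$, matching the definition of $t_G(e)$ in this case.

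Combining $+4$ from the $n$ term with $-(1+t_G(e))$ from $\rho$ gives the claimed formula $\rhoFour(G/e)=\rhoFour(G)+3-t_G(e)$. The only delicate point is the root case. There one must notice that edges from $v$ to roots outside $S$ also stop being counted: they turn into root--root edges rather than merging. This is exactly why the definition of $t_G(e)$ uses $\deg^X_G v-1$ rather than $|S\cap X_G|$.
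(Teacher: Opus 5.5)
Your proof is correct and is essentially the paper's argument: $n$ drops by one and $\rho$ drops by exactly $1+t_G(e)$. The only difference is that the paper avoids your case split by first adding all root--root edges, which changes neither $\rhoFour$ nor $t_G$; then $t_G(e)$ is simply the number of triangles containing $e$, and the edge count is the same in both cases.
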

\begin{proof}
We can assume that $G[X_G]$ is a clique, since the edges between roots affect neither $\rhoFour$ nor $t_G$.
Then $t_G(e)$ is exactly the number of triangles in $G$ containing $e$.  Note that contracting $e$ eliminates the
edge $e$ itself, as well as $t_G(e)$ additional edges incident with the non-root end $v$ that are identified with the edges incident with $u$.
Therefore, $\rho(G/e)=\rho(G)-1-t_G(e)$, and the claim follows since $n(G/e)=n(G)-1$.
\end{proof}

Sometimes, it is convenient to think about (root) separations in the following alternate way.
For a graph $H$, a \emph{fragment} of $H$ is a non-empty set $Y\subseteq V(H)$.  If $H$ is a rooted graph, we additionally
require that $Y\cap X_H=\emptyset$.  We define $\bd_HY$ as the set of vertices in $V(H)\setminus Y$ with a neighbor in $Y$,
$\rho(H,Y)$ as the number of edges of $H$ with at least one end in $Y$, and
$\rhoFour(H,Y)=\rho(H,Y)-4|Y|$.  Then $(V(H)\setminus Y,Y\cup \bd_HY)$ is a (root) separation of $H$ of order $|\bd_HY|$
whose right-hand side has $4$-density $\rhoFour(H,Y)$.
We also define $R^H_Y$ (or just $R_Y$ when $H$ is clear from the context)
as the right-hand side of this separation, that is, the rooted graph $\roots{H[Y\cup \bd_HY]}{\bd_HY}$.
For a non-negative integer $k$, if $|\bd_HY|=k$, then $Y$ is a \emph{$k$-fragment}, and if $|\bd_HY|\le k$, then
we say that $Y$ is an \emph{$(\le\!k)$-fragment}.  For notational convenience, we also define $\bd_H\emptyset=\emptyset$ and $\rhoFour(H,\emptyset)=0$.
Finally, let us note the following alternate characterization of $4$-light rooted graphs.
\begin{observation}\label{obs:heavyfrag}
For a positive integer $k$, a $k$-rooted graph $H$ is $4$-light if and only if every $(\le\!(k-1))$-fragment $Y$ of $H$ satisfies $\rhoFour(H,Y)\le 0$.
\end{observation}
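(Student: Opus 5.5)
This is a direct translation between root separations and fragments, so the plan is to prove each implication by passing through the correspondence described just before the statement.

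For the forward direction, assume $H$ is $4$-light and let $Y$ be an $(\le\!(k-1))$-fragment. Consider the separation $(A,B)=(V(H)\setminus Y,\,Y\cup\bd_HY)$. Since $Y\cap X_H=\emptyset$, we have $X_H\subseteq A$, so $(A,B)$ is a root separation. Its order is $|A\cap B|=|\bd_HY|\le k-1$. The $4$-density of its right-hand side equals $\rhoFour(H,Y)$, as noted in the text. To see this, observe that $R_{A,B}=\roots{H[Y\cup\bd_HY]}{\bd_HY}$ has non-root vertices exactly $Y$. Its edges not inside the root set are exactly the edges of $H$ with an end in $Y$, because every neighbour of $Y$ lies in $Y\cup\bd_HY$. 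Lightness therefore gives $\rhoFour(H,Y)\le 0$.

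For the converse, assume every $(\le\!(k-1))$-fragment has non-positive $4$-density, and let $(A,B)$ be a root $(\le\!(k-1))$-separation. Set $Y=B\setminus A$.
\begin{itemize}
\item If $Y=\emptyset$, then $R_{A,B}$ has no non-root vertices. Hence $n=0$ and $\rho=0$, so its $4$-density is $0$.
\item Otherwise $Y$ is disjoint from $X_H\subseteq A$, so it is a fragment. The non-root vertices of $R_{A,B}$ are exactly $Y$.
\end{itemize}
In the second case, every edge of $H[B]$ with an end in $Y$ is an edge of $H$ with an end in $Y$. Conversely, every edge of $H$ with an end in $Y$ has its other end in $B$, since $(A,B)$ is a separation. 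Thus $\rhoFour(R_{A,B})=\rhoFour(H,Y)$.

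The same separation property gives $\bd_HY\subseteq A\cap B$. Hence $|\bd_HY|\le k-1$, so $Y$ is an $(\le\!(k-1))$-fragment, and the hypothesis yields $\rhoFour(R_{A,B})\le 0$.

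The only point requiring any care is that a root separation may have $A\cap B$ strictly larger than $\bd_H(B\setminus A)$. This does not affect the $4$-density, because the extra vertices of $A\cap B$ are roots of $R_{A,B}$. Edges among roots are not counted, and edges from these extra vertices to $Y$ do not exist, as those vertices are not in $\bd_HY$. With this checked, the equivalence follows.
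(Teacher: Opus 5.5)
Your proof is correct and matches the paper's reasoning. The paper gives no separate proof; it relies on the remark just before the statement that a fragment $Y$ yields the root separation $(V(H)\setminus Y,\,Y\cup\bd_HY)$ with right-hand side of $4$-density $\rhoFour(H,Y)$. Your converse via $Y=B\setminus A$, using $\bd_HY\subseteq A\cap B$ and treating $B\setminus A=\emptyset$ separately, correctly fills in the direction the paper leaves implicit.
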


Dvořák~\cite{Dvorak} actually provides the density result not just for $K_5$, but also for all spanning subgraphs of $K_5$.
For a class $\calC$ of graphs with $k$ vertices, we say that a $k$-rooted graph $G$ is \emph{$\calC$-universal} if for
every $F\in\calC$ and every bijection $\pi:V(F)\to X_G$, the graph $F$ is a $\pi$-rooted minor of $G$.
For a positive integer $k$ and for $t\in\bigl\{0,\ldots,\binom{k}{2}\bigr\}$, let $\calS_{k,t}$ denote the class of all graphs on $k$
vertices with at most $t$ edges.  Let $\calS_{5,4}^{-}$ be $\calS_{5,4}$ with the graphs isomorphic to
$K_2+K_3$ (the vertex-disjoint union of $K_2$ and a triangle) omitted.  For an integer $t$, we define the \emph{$t$-target} $\calT_t$ as
\[
\begin{array}{c|cccccccc}
t& t\leq0&1&2&3&4&5&6&t\geq7\\
\hline
\calT_t&\calS_{5,0}&\calS_{5,1}&
\calS_{5,3}&\calS_{5,4}^{-}&\calS_{5,6}&
\calS_{5,8}&\calS_{5,9}&\calS_{5,10}.
\end{array}
\]
We are now ready to state the main result of~\cite{Dvorak}.

\begin{theorem}[{Dvořák\cite[Theorem 4]{Dvorak}}]\label{thm:D-target}
Every $4$-light $5$-rooted graph $G$ is $\calT_{\rhoFour(G)}$-universal.
\end{theorem}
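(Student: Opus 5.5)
Since this is the main theorem of~\cite{Dvorak}, in the present paper we would simply cite it. If I had to reprove it, I would argue by a minimal counterexample. Let $G$ be a $4$-light $5$-rooted graph that is not $\calT_{\rhoFour(G)}$-universal, chosen with $|V(G)|+|E(G)|$ minimum. Put $t=\rhoFour(G)$. Adding edges between roots changes neither $\rhoFour$ nor $4$-lightness, and removing them only makes universality harder, so we may assume $G[X_G]$ is a clique whenever convenient.

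\textbf{Base cases.} For $t\le 0$ the target is $\calS_{5,0}$, which is trivial. For larger $t$ the small targets should follow from Menger-type linkage arguments: $4$-lightness together with $\rhoFour(G)>0$ rules out root separations of order at most $4$ with empty left interior. So five disjoint paths from the roots into the interior exist, and the required few edges can be routed through them.

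\textbf{Reductions.} Let $e$ be an edge with at least one non-root end. Deleting $e$ lowers $\rhoFour$ by exactly $1$ and preserves $4$-lightness. The targets $\calT_t$ are nested, and $\calT_{t-1}\subsetneq\calT_t$ only at the listed thresholds, so minimality should force every such edge to be \emph{critical}.

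Next, contract an edge $e=uv$ with $v\notin X_G$. By Observation~\ref{obs:contrdens}, $\rhoFour(G/e)=\rhoFour(G)+3-t_G(e)$. Hence if $t_G(e)\le 3$, the graph $G/e$ has $4$-density at least that of $G$. If $G/e$ is also $4$-light, minimality gives a rooted model in $G/e$, and it lifts to $G$. So in the reduced situation every edge with a non-root end lies in at least four triangles of $G$ (counted with the root clique added).

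\textbf{Main obstacle: $4$-lightness of $G/e$.} Contraction can create a root $(\le\!4)$-separation of $G/e$ whose right-hand side is heavy. Such a separation comes from a $(\le\!5)$-separation of $G$ whose separator contains both $u$ and $v$. I would handle it as follows.
\begin{enumerate}
\item Take an isolator $(C,D)$ of that separation.
\item Apply induction to the right-hand side $R_{C,D}$, which is smaller, to obtain a rich rooted minor (for instance a clique) on its separator.
\item Replace $R_{C,D}$ by that minor, using Observation~\ref{obs:seprepl}, and apply induction again to the resulting smaller graph.
\item Use Observation~\ref{obs:linkminor} to transfer the resulting model back to $G$.
\end{enumerate}
The delicate part is the density bookkeeping across the separation: we need the reduced left side to retain $4$-density at least $t$ while remaining $4$-light. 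This is where nested separations and the specific threshold values in the table for $\calT_t$ matter.

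\textbf{Finishing.} Once every edge lies in at least four triangles and no bad separations remain, Observation~\ref{obs:4sum} gives a non-root vertex of small degree, that is, $\deg_G v+\deg^X_G v$ close to $8$. Its neighbourhood is then dense, since each incident edge lies in at least four triangles. In that neighbourhood, together with paths to the roots given by internal $5$-connectivity, one builds each graph of $\calS_{5,10}$ (or of the relevant $\calT_t$) directly, after a finite case analysis on the structure of the neighbourhood. This contradicts the choice of $G$.
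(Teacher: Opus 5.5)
Citing the result is exactly what the paper does. It states this theorem without proof and imports it from~\cite{Dvorak}, so your first sentence matches the paper's treatment. The self-contained reproof you sketch, however, does not work as written, and several of its concrete steps are false.

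\textbf{Steps that fail as stated.}
(1) The normalization is backwards. If $G[X_G]$ were a clique, $G$ would be trivially $\calS_{5,10}$-universal (use singleton bags), so a counterexample can never be assumed to have a root clique. What minimality actually gives is $E(G[X_G])=\emptyset$; the root clique is only the bookkeeping device built into $t_G$.
(2) Deleting an edge does \emph{not} preserve $4$-lightness. Suppose $Y$ is a $5$-fragment with $\rhoFour(G,Y)\ge 2$, and a boundary vertex $w$ has a single neighbor $y\in Y$. Then $Y$ is a $4$-fragment of $G-wy$ of positive $4$-density. This is precisely the case that Lemma~\ref{lem:edge-light} must rule out by a separate argument in the analogous setting. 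Moreover, since $\calT_{t-1}\subsetneq\calT_t$ for every $1\le t\le 7$, deletion could at best give $\rhoFour(G)\le 7$, not any criticality.
(3) A heavy root $(\le\!4)$-separation of $G/e$ need not come from a $5$-separation of $G$ with $u,v$ in the separator. The contracted vertex can lie in the interior of the heavy side. Then $G$ has a $(\le\!4)$-separation whose right-hand side is light but, for a minimal choice, contains $K_k$ or a dart as a rooted minor, i.e.\ a reducible fragment. Your sketch does not treat this case.

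\textbf{Steps left open.}
Replacing one side of a small separation by a clique can itself create new heavy $(\le\!4)$-separations. Handling this is the whole purpose of Section~\ref{sec:redu}: reducible fragments, the dart of Lemma~\ref{lem:dartexists} covering the exceptional outcome of Theorem~\ref{thm:D-le4}, the inclusion-maximal choice of the fragment, and Lemma~\ref{lemma:preserve-bilight}. For $5$-separations, induction only gives $\calT_{\rhoFour(R)}$-universality of the heavy side $R$. This yields a clique on the separator only when $\rhoFour(R)$ is large relative to the missing separator edges (cf.\ Lemma~\ref{lem:D-target}). Meanwhile the replacement lowers the density of the rest by $\rhoFour(R)$ minus the number of added separator edges, so ``apply induction again'' need not be available.

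Your finishing step invokes internal $5$-connectivity, which a $4$-light graph does not have. Root connectivity below five must be converted into a reducible fragment, as in the proof of Theorem~\ref{thm:VH}. The ``finite case analysis'' must also realize every $F\in\calT_t$ under \emph{every} bijection $\pi$, not merely some dense rooted minor near $v$. That analysis is the substance of Dvořák's theorem, and it is entirely missing. What you have is the standard architecture, the same one the paper follows for Theorem~\ref{thm:VH}, with some shortcuts that are false and the key lemmas absent.
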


We will also need a similar density result for graphs with at most four roots
(which easily follows from the characterization by Fabila-Monroy and Wood~\cite{fabila2013rooted}).

\begin{theorem}[{Dvořák~\cite[Corollary 13]{Dvorak}}]\label{thm:D-le4}
Let $G$ be a $4$-light $k$-rooted graph, where $k\le 4$.
If $\rhoFour(G)>0$, then either
\begin{itemize}
\item $G$ contains $K_k$ as a rooted minor, or
\item $k=4$, $E(G[X_G])=\emptyset$, $\rhoFour(G)=1$, and $G$ is $\calS_{4,5}$-universal.
\end{itemize}
\end{theorem}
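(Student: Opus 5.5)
The plan is to reduce to the case where $G-X_G$ is a single component attached to all roots. Then I would use the known characterizations of rooted $K_k$ minors for $k\le 4$: the trivial ones for $k\le 3$ and that of Fabila-Monroy and Wood for $k=4$. Each obstruction in those characterizations would be shown to force $\rhoFour(G)\le 0$, except for one small planar configuration, which yields the exceptional outcome.

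\emph{Reduction to one component.} Let $Y_1,\dots,Y_m$ be the vertex sets of the components of $G-X_G$. Each edge with a non-root end has its non-root ends in exactly one $Y_i$, so $\rhoFour(G)=\sum_i \rhoFour(G,Y_i)$. If $|\bd_G Y_i|\le k-1$, then Observation~\ref{obs:heavyfrag} gives $\rhoFour(G,Y_i)\le 0$. Hence some component $Y$ has $\bd_G Y=X_G$ and $\rhoFour(G,Y)>0$. We may replace $G$ by $R_Y$, since a rooted $K_k$ minor of $R_Y$ is one of $G$, and likewise for $\calS_{4,5}$-universality. I would also need to check that $R_Y$ is still $4$-light and that its root edges agree with those of $G$, which holds because $R_Y=G[Y\cup X_G]$ with all roots kept. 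For $k\le 2$ we are now done, since a connected $Y$ adjacent to all roots gives a rooted $K_k$.

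\emph{$k=3$.} The configuration to handle is a connected $Y$ adjacent to all three roots with no rooted $K_3$. The standard characterization says this happens only if some vertex $v\in Y$ (or a root) separates the roots pairwise. That is, there is a root separation whose right-hand side pieces each attach to at most two roots plus $v$, and each such piece is a fragment with boundary of size at most $2$. Then $4$-lightness bounds each piece by $0$. The edges at $v$ itself contribute at most $\deg^X v\le 3<4$, so the total $4$-density is at most $0$, which is a contradiction.

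\emph{$k=4$, the main step.} By Fabila-Monroy and Wood, a $4$-rooted graph with no rooted $K_4$ either has a root $(\le\!3)$-separation of a specific kind or, after such reductions, is planar with the roots on a common face in some cyclic order. The separation cases would be handled as in the $k=3$ case: the pieces are $(\le\!3)$-fragments, so by $4$-lightness their $4$-density is at most $0$. The few separating vertices add only a bounded amount, which I would check does not make the total positive. This bookkeeping, with separations sharing vertices, is where I expect most of the care to go.

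\emph{The planar case.} Add a new vertex inside the root face, adjacent to all four roots. The resulting graph is planar with $n+5$ vertices, where $n=n(G)$, so $|E(G)|+4\le 3(n+5)-6$. Writing $r=|E(G[X_G])|$, this gives
\[ \rho(G)\le 3n+5-r \quad\text{and hence}\quad \rhoFour(G)\le 5-n-r. \]
This is at most $0$ unless $n+r\le 4$. I would then finish by a small case analysis on $n\le 4$, using that $Y$ is connected, adjacent to all roots and $4$-light. In every case except $r=0$ and $\rhoFour(G)=1$, either a rooted $K_4$ appears directly or the density is non-positive. In the remaining extremal situation the planar graph is a triangulation in which the roots are independent. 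For it I would verify $\calS_{4,5}$-universality directly: since $G$ has no rooted $K_4$, the only thing to check is that $K_4$ minus one edge is a $\pi$-rooted minor for every bijection $\pi$, which amounts to contracting suitable paths through $Y$ for each pair of roots designated as non-adjacent.
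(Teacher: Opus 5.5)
The paper does not prove this statement. It quotes it from Dvořák and only remarks that it follows easily from the Fabila-Monroy--Wood characterization, so your overall route is the expected one. For $k\le 3$ your argument is essentially fine. For $k=4$, however, you place the exceptional outcome in the wrong case, and the claim that makes your bookkeeping work is false.

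\emph{The planar case never has positive density.} In your graph $G'$, the face at each of the four angles of $w$ is a triangle only if the two roots consecutive around $w$ are adjacent. So at least $4-r$ of these angles lie in faces of length at least four. Refining your Euler count gives $|E(G)|+4\le 3(n+5)-6-(4-r)$, that is, $\rho(G)\le 3n+1$ and $\rhoFour(G)\le 1-n\le 0$. In particular, a triangulation with independent roots cannot occur, and your ``remaining extremal situation'' is empty.

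\emph{The exceptional case comes from a separation obstruction.} Take $K_{1,1,4}$ rooted at its four independent vertices: two adjacent non-roots $u,v$, each adjacent to all roots. It is $4$-light, has $\rhoFour=1$, and has no rooted $K_4$. It is not planar with the roots on a common face, since adding $w$ creates $K_{3,4}$. It falls under the obstruction in which $\{u,v\}$ separates the roots pairwise. Every component $Q$ of $G-X_G-\{u,v\}$ is then a $(\le\!3)$-fragment, so $\rhoFour(G,Q)\le 0$. But $\rhoFour(G)=\sum_Q\rhoFour(G,Q)+\varepsilon+\deg^X_G u+\deg^X_G v-8\le 1$, where $\varepsilon\in\{0,1\}$ records whether $uv\in E(G)$. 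Equality is possible, so your assertion that the separating vertices ``do not make the total positive'' fails.

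This is exactly where the second outcome must be derived. Equality forces $uv\in E(G)$, $u$ and $v$ adjacent to all roots, and no root edges. Contracting $u$ and $v$ into two prescribed roots then gives $K_4$ minus the edge between the other two roots, and this yields $\calS_{4,5}$-universality.

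Three further points:
\begin{itemize}
\item You never state the Fabila-Monroy--Wood obstructions precisely, so the separation bookkeeping cannot be checked as written.
\item In particular, if some obstruction has pieces attached to four vertices (for instance a $2$-cut splitting the roots $2$--$2$), $4$-lightness says nothing about those pieces. You would need an inductive argument there, as in the proof of Lemma~\ref{lem:dartexists}.
\item In your reduction to one component, the second outcome also requires $\rhoFour(G)=1$. If a second component with boundary $X_G$ has positive density, you must instead produce a rooted $K_4$. Use $\calS_{4,5}$-universality of $R_Y$ together with that second component to supply the missing edge.
\end{itemize}
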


Concerning our main density result, Theorem~\ref{thm:denstarget},
we would like to relax the 5-connectivity to something similar to 4-lightness.  However, this is complicated by the
unrooted setting, and it turns out that the obvious first attempt (requiring every $(\le\!4)$-separation
to have a side of non-positive 4-density) is not sufficient for the proof to go through.
Hence, we need the following more restrictive assumption, essentially stating that the choice of the ``sparse''
side must be consistent for nested $(\le\!4)$-separations.

A \emph{bifragment}
in a graph $G$ is a pair $(S,T)$ of disjoint fragments such that $G$ has no edge with one end in $S$ and the other end in $T$.
It is a \emph{$k$-bifragment} if $|\bd_GS|, |\bd_GT|=k$, and \emph{$(\le\!k)$-bifragment} if $|\bd_GS|, |\bd_GT|\le k$.
We say that the bifragment
is \emph{dense} if $\rhoFour(G,S)>0$ and $\rhoFour(G,T)>0$.  We say that a graph $G$ is \emph{$4$-bilight} if it does not have any dense $(\le\!4)$-bifragment.
Thus, a stronger version of our main result can be stated as follows.

\begin{theorem}\label{thm:density}
Let $G$ be a graph with $n\ge 3$ vertices.  If $G$ is $4$-bilight and has at least $4n-7$ edges,
then it contains $\target$ as a minor.
\end{theorem}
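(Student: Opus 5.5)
We would prove Theorem~\ref{thm:density} by considering a minimal counterexample $G$: a $4$-bilight graph with $n\ge 3$ vertices and at least $4n-7$ edges, with no $\target$ minor, chosen with $|V(G)|+|E(G)|$ minimum. Small cases are handled directly. For $n\le 6$ we have $4n-7>\binom{n}{2}$ except when $n\in\{6\}$ gives $17>15$ and $n\le 5$ gives $4n-7>\binom n2$ only for some $n$; in every case either the edge bound is impossible or it forces a dense enough graph to check by hand. We may then assume $|E(G)|=4n-7$, since deleting an edge preserves $4$-bilightness ($\rhoFour$ of fragments only decreases).

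The first structural step is to control small separations. For a proper $(\le\!4)$-separation, $4$-bilightness and the consistency built into the definition give a side $Y$ with $\rhoFour(G,Y)\le 0$. We would replace the light side by a clique on the separator, or contract it onto the separator using Theorem~\ref{thm:D-le4} and Observation~\ref{obs:seprepl}. The aim is a smaller graph that is still $4$-bilight, still has at least $4n'-7$ edges, and still has no $\target$ minor; minimality then rules this out. This is where the auxiliary reduction result of Section~\ref{sec:redu} would enter. The delicate point is showing that the new graph has no dense $(\le\!4)$-bifragment, which is exactly why the consistency assumption is needed.

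Next we handle $5$-separations $(A,B)$ with both sides dense. Using the strengthened rooted result (each sufficiently dense $5$-rooted graph has a $7$-vertex graph as a rooted minor, this graph being $K_7$ minus the root edges and a matching of size at most two), one side supplies most of $\target$. It then suffices to find $K_5$ as a rooted minor on the other side, which Theorem~\ref{thm:D-target} gives once $\rhoFour\ge 7$, or gives a suitable target class otherwise. This yields a consistent "nearly light" orientation of $(\le\!5)$-separations.

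Then we use contraction. By Observation~\ref{obs:contrdens}, contracting an edge $e$ in a triangle count $t(e)\le 3$ keeps the edge count at least $4(n-1)-7$. So by minimality, provided $G/e$ stays $4$-bilight, every edge lies in at least $4$ triangles. Bilightness could fail only through a new small separation, which the previous steps let us analyze.

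Finally, since $|E(G)|<4n$, some vertex $v$ has degree at most $7$, and every edge at $v$ lies in $\ge 4$ triangles. Hence $G[N(v)]$ has minimum degree $\ge 4$ on at most $7$ vertices. A case analysis of such neighborhoods, plus one extra branch set outside $N[v]$ obtained via the connectivity established above, should produce $\target$. We expect the main obstacle to be the step showing that $G/e$ remains $4$-bilight, together with the nested-separation bookkeeping; we would attack it first by examining how a dense bifragment in $G/e$ lifts to a $(\le\!5)$-bifragment in $G$ containing both ends of $e$ in its boundary.
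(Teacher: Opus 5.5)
Your overall architecture matches the paper's: a minimal counterexample, removal of $(\le\!4)$-fragments, a vampire/$K_5$ argument on $5$-separations, four triangles per edge via contraction, and a low-degree vertex. However, two steps fail as you state them.

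\textbf{Edge deletion.} Deleting an edge does \emph{not} preserve $4$-bilightness. The $4$-density of a fixed fragment can only drop, but its boundary can shrink. Suppose $u\in S$ is the only neighbor in $S$ of some $v\in\bd_G S$. Then in $G-uv$, a $5$-fragment $S$ with $\rhoFour(G,S)\ge 2$ becomes a $4$-fragment of positive $4$-density, so a new dense $(\le\!4)$-bifragment can appear. Hence $|E(G)|=4n-7$ does not come for free, and without it your final step has no vertex of degree at most seven. In the paper this is Lemma~\ref{lemma:one-edge-delete}, and it already needs the full $5$-separation analysis. The dense bifragment of $G-e$ lifts to a $(\le\!5)$-bifragment of $G$. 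Corollary~\ref{cor:bifragment-constraint} then pins down its structure, and Lemma~\ref{lem:not-both-VH} forces the boundary to be a clique, which yields $K_7^-$.

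\textbf{Connectivity in the final step.} The ``connectivity established above'' that you invoke at the end is never established, and your $(\le\!4)$-step cannot supply it. Replacing the light side by a clique is a minor operation only if that side already contains the clique as a rooted minor. Theorem~\ref{thm:D-le4}, like Lemma~\ref{lem:dartexists}, requires \emph{positive} $4$-density, so it cannot be used to contract a light side. The paper removes only \emph{reducible} fragments, whose rooted clique or dart must be supplied from elsewhere. A minimal counterexample may therefore still have light $(\le\!4)$-separations. In particular, possibly no component of $G-N[v]$ attaches to five vertices of $N(v)$, and then your extra branch set does not exist.

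The missing idea is the dichotomy plus counting of Lemma~\ref{lem:sepby4} and Corollary~\ref{cor:deg8}. Take any component $C$ of $G-N[v]$.
\begin{itemize}
\item If $|\bd_G C|\ge 5$, then a rooted $K_5^-$ on five attachment vertices inside $G[N(v)]$ (Lemma~\ref{lem:lot-on-five}), together with $C$ and $v$, gives $\target$.
\item Otherwise $\rhoFour(G,C)\le 0$. If not, the opposite side is light and contains the rooted clique on $\bd_G C$ supplied by $N(v)$ (Corollary~\ref{cor:lot-on-lefour}), so it is reducible, which is excluded.
\end{itemize}
Summing over the components gives $|E(G[N[v]])|\ge 4|N[v]|-7$. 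This is impossible for $\deg v=5$, and it yields $\target$ inside $N[v]$ for $\deg v\in\{6,7\}$. So the minimum degree is at least eight, which contradicts $|E(G)|=4n-7$.
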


Since a 5-connected graph cannot contain any $(\le\!4)$-bifragment, it is trivially $4$-bilight;
hence Theorem~\ref{thm:density} clearly implies Theorem~\ref{thm:denstarget}.
It will be occasionally useful to apply the $\rhoFour$ function to unrooted graphs as well; in the case that $G$ is an unrooted graph,
let us define $\rhoFour(G)=|E(G)|-4|V(G)|$.  Thus, the assumption of Theorem~\ref{thm:density} can also be written as $\rhoFour(G)\ge -7$.

To deal with 5-separations in our proof of Theorem~\ref{thm:density}, we need an auxiliary result on 5-rooted graphs.
A $5$-rooted graph $G$ is \emph{quite heavy} if either $\rhoFour(G)\ge 2$, or $\rhoFour(G)=1$ and no vertex in $V(G)\setminus X_G$ is adjacent to all five roots
(in particular, this excludes the case that $G$ has only one non-root vertex).
A \emph{vampire} is a $7$-vertex $5$-rooted graph $W$ with no edges between roots whose non-root vertices $p$ and $q$ are adjacent
and there exist distinct roots $x_1,x_2\in X_W$ such that $p$ is adjacent to every root except
possibly to $x_2$, and $q$ is adjacent to every root except possibly to $x_1$.
Let \Ktwofive{} denote the $7$-vertex $5$-rooted graph whose non-root vertices $p$ and $q$ do not form an edge and are adjacent to all roots.
Thus, both a vampire and \Ktwofive{} only miss edges of a matching among those incident with $p$ and $q$.
Dvořák~\cite{Dvorak} (essentially) proved that if a $4$-light $5$-rooted graph is quite heavy, then it is either
$\{K_4+K_1\}$-universal, or contains a vampire or \Ktwofive{} as a rooted minor
(this is basically Corollary~16 from~\cite{Dvorak}, though the exact statement is a bit different).
We need a stronger version of this auxiliary result: It is actually possible to remove the first outcome.

\begin{theorem}\label{thm:VH}
Let $G$ be a $4$-light $5$-rooted graph.  If $G$ is quite heavy, then
$G$ contains a vampire or \Ktwofive{} as a rooted minor.
\end{theorem}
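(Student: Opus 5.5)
We argue by contradiction and take a counterexample $G$ to Theorem~\ref{thm:VH} that is minimal with respect to $|V(G)|+|E(G)|$. We may assume $G[X_G]$ has no edges, since root edges affect neither $\rhoFour$ nor the conclusion. The plan is to derive strong structural properties of $G$ from minimality. Then we show that a quite heavy $G$ with these properties contains a vampire or \Ktwofive{} directly, using the $\{K_4+K_1\}$-universality provided by Theorem~\ref{thm:D-target} (and Dvořák's corollary) as a starting point.

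\textbf{Step 1: reductions.} First we show that every non-root edge deletion or contraction that preserves $4$-lightness must destroy quite-heaviness.
\begin{itemize}
\item For deletion: if $\rhoFour(G)\ge 3$, or if $\rhoFour(G)=2$ and some edge can be deleted without creating a universal non-root vertex, then deleting it yields a smaller counterexample. Hence $\rhoFour(G)\in\{1,2\}$, with tight control on its value.
\item For contraction: by Observation~\ref{obs:contrdens}, contracting an edge $e$ with $t_G(e)\ge 3$ does not decrease $\rhoFour$. So minimality forces either $t_G(e)\le 2$, or the contraction creates a heavy root $(\le\!4)$-fragment or a vertex adjacent to all five roots.
\end{itemize}
We handle root $(\le\!4)$-separations $(A,B)$ with $\rhoFour(R_{A,B})=0$ or with non-trivial right-hand side by replacing $R_{A,B}$ with a clique on $A\cap B$ (or a suitable rooted minor of it, obtained from Theorem~\ref{thm:D-le4} and Observation~\ref{obs:seprepl}). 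The replacement is smaller and still quite heavy, and any vampire or \Ktwofive{} found in it lifts back to $G$. The conclusion is that $G$ is internally $5$-connected, every root separation of order $5$ other than the trivial one is handled similarly, and $G$ has minimum degree (counting $\deg^X$) large enough that Observation~\ref{obs:4sum} with $\rhoFour(G)\le 2$ bounds $n(G)$ or forces many vertices of small weighted degree.

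\textbf{Step 2: local structure.} Using that every edge lies in at least three triangles (after adding root edges), we analyse a non-root vertex $v$ minimizing $\deg_G v+\deg^X_G v$; by Observation~\ref{obs:4sum} this is at most $9$ or $10$. Its neighbourhood is then dense, so contracting $v$ together with suitable neighbours into the roots should produce two adjacent non-root branch sets $p,q$ that each see four roots with distinct missing roots, which is a vampire, or two branch sets that see all roots, which is \Ktwofive{}. Five disjoint paths from the roots to $N(v)$ exist by internal $5$-connectivity, and we route them through these branch sets.

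\textbf{Step 3 (main obstacle): the $\{K_4+K_1\}$-universal outcome.} The main obstacle is the case where the only structure found is $K_4+K_1$ on the roots without two hub vertices. Here we would use the quite-heavy condition in its sharp form: when $\rhoFour(G)=1$ and no vertex sees all roots, we argue by exchanging the roles of branch sets. Take a model of $K_4+K_1$ that minimizes the branch set of the isolated root. The leftover vertices, together with the heaviness, should give a second hub, and we split one branch set to obtain $p$ and $q$. I expect this case analysis, in particular ensuring that the missing roots of $p$ and $q$ are distinct, to require the bulk of the work, and I would try to organize it via an exhaustive check of small $7$- or $8$-vertex configurations that survive Step 1.
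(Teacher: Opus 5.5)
Your outline has genuine gaps. The steps that carry the weight are either justified by tools that do not apply, or left as expectations.

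\textbf{Step 1 does not deliver what you claim.} Theorem~\ref{thm:D-le4} and Lemma~\ref{lem:dartexists} need $\rhoFour>0$. But $4$-lightness says precisely that every root $(\le\!4)$-separation has a right-hand side of non-positive $4$-density, so these results give you no clique or dart on $A\cap B$. Even when such a rooted minor exists, pasting it in can create a new root $(\le\!4)$-separation of positive density, destroying $4$-lightness. It can also create a non-root vertex adjacent to all five roots, destroying quite-heaviness when $\rhoFour=1$. Handling this is the whole point of the paper's reducible fragments: only fragments of $4$-density at most $0$ that already contain a clique or dart are reduced, an inclusionwise-maximal one is taken, and Lemma~\ref{lemma:preserve-bilight} shows that $4$-lightness survives, with its last clause restoring quite-heaviness.

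Consequently, internal $5$-connectivity is not available; the paper never proves or uses it. So the ``five disjoint paths from the roots to $N(v)$'' in Step~2 are unfounded. The paper instead takes an isolator of $(V(G)\setminus\{v\},N(v)\cup\{v\})$. If it has order at most four, the paper builds a clique or dart in its right-hand side from the neighbourhood of $v$, contradicting the absence of reducible fragments.

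Likewise, the conclusions you draw from deletion and contraction ($\rhoFour(G)\le 2$, three triangles per edge) require $G-e$ and $G/e$ to stay $4$-light. That is Lemma~\ref{lem:edge-light}, which needs Lemma~\ref{lem:order-five} (no proper quite heavy root $5$-separation, via Corollary~\ref{cor:vatoredu}) and Lemma~\ref{lem:components}. Your contraction bullet also has the inequality reversed: by Observation~\ref{obs:contrdens}, $\rhoFour$ does not decrease when $t_G(e)\le 3$, and the correct consequence is $t_G(e)\ge 3$.

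\textbf{The local analysis in Step 2 is too weak as set up.} Without Lemma~\ref{lem:components} (no non-root vertex sees four roots) you cannot exclude $\rhoFour(G)=2$. So you only get a vertex with $\deg v+\deg^X v\le 9$ or $10$. The paper's Corollary~\ref{cor:vhsparse} gives $\rhoFour(G)=1$, hence a vertex with $\deg v+\deg^X v\le 8$, which is what Lemma~\ref{lem:vampnbhd} needs. Three triangles per edge only give minimum degree three in $G[N(v)]$. The paper needs Lemma~\ref{lem:three-edge} (four triangles on the edges at such a $v$) in order to apply Lemma~\ref{lem:vampnbhd}. That lemma produces a terminator $z$ with three edges to the others, so that $v$ and $z$ become the two adjacent hubs of a vampire. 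Your Step~2 replaces all of this with ``should produce''.

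\textbf{Step 3, which you rightly identify as the crux, is only a placeholder.} A model witnessing $\{K_4+K_1\}$-universality has no non-root branch sets at all. ``Minimise the isolated root's branch set, find a second hub, split a branch set'' is not an argument. The paper does not post-process Dvo\v{r}\'ak's $\{K_4+K_1\}$ outcome and never uses it. Instead it runs a fresh minimal-counterexample argument aimed directly at vampires and \Ktwofive{}. The ingredients you are missing are reducible fragments, Lemma~\ref{lem:order-five}, Lemma~\ref{lem:components} and Lemma~\ref{lem:three-edge}.
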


The proof of this Theorem is somewhat technical and essentially involves following the outline of the proof of
Theorem~\ref{thm:D-target} in this simplified situation; we present it in Section~\ref{sec:vampire}.

\section{Reducible $(\le\!4)$-fragments}\label{sec:redu}

As the first step in the proofs of both Theorem~\ref{thm:density} and Theorem~\ref{thm:VH},
we eliminate certain (root) $(\le\!4)$-separations.  This section is devoted to auxiliary results needed for this task.
The basic instance of this reduction is as follows:  Suppose that in a minimal counterexample $G$ to one of the Theorems,
we have a (root) $(\le\!4)$-separation $(A,B)$ whose right-hand side contains $K_{|A\cap B|}$ as a rooted minor.
We would naturally like to contract this right-hand side to the clique, thus obtaining a smaller counterexample $H$ and
a contradiction.

An issue is that this reduction can break $4$-lightness or $4$-bilightness.  Indeed, $H$ can have a
(root) $(\le\!4)$-separation $(C,D)$ with $A\cap B\subseteq D$, and because of the added edges of the clique on $A\cap B$, we
can have $\rhoFour(R^H_{C,D})>\rhoFour(R^G_{C,D\cup B})$, making $\rhoFour(R^H_{C,D})>0$ possible.  To counteract this, we would like to argue that in this case
we can further contract the right-hand side of $(C,D)$ in $H$ (or equivalently, the right-hand side of $(C,D\cup B)$ in $G$).
However, it is not quite clear that contracting $R^H_{C,D}$ to a clique is possible, especially in the case that $|C\cap D|>|A\cap B|$ since then the clique on $A\cap B$
does not help us much with this task.

To deal with this issue, we essentially want to apply Theorem~\ref{thm:D-le4} to $R^H_{C,D}$.  However, the second outcome where we do not obtain
a full clique is problematic.  Fortunately, it turns out that we can make do with another rooted minor.  A \emph{dart} is a $4$-rooted graph $D$ with one
non-root vertex $y$ such that $y$ is adjacent to all roots and $D[X_D]$ is the disjoint union of a path
on three vertices and an isolated vertex.  First, let us prove a variation on Theorem~\ref{thm:D-le4}, showing that positive $4$-density ensures the existence of a dart minor.
\begin{lemma}\label{lem:dartexists}
Let $G$ be a $4$-light $4$-rooted graph.
If $\rhoFour(G)>0$, then $G$ contains a dart as a rooted minor.
\end{lemma}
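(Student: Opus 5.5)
We derive the lemma directly from Theorem~\ref{thm:D-le4} applied to $G$ with $k=4$. Since $G$ is $4$-light, $4$-rooted and $\rhoFour(G)>0$, the theorem gives one of two outcomes. Either $G$ contains $K_4$ as a rooted minor, or $E(G[X_G])=\emptyset$, $\rhoFour(G)=1$, and $G$ is $\calS_{4,5}$-universal. The dart has four roots and one extra vertex, so in each outcome the task is to produce the additional non-root vertex $y$ adjacent to all four roots, on top of the required root graph $P_3+K_1$.

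In the $\calS_{4,5}$-universal outcome, the natural plan is not to use universality directly, since it only gives a graph on the four roots with no fifth vertex. Instead we extract a $4$-rooted minor with one more branch set. The approach is to choose a non-root vertex $y$, or a connected non-root set $Y$, adjacent to all roots, and then contract the remainder to realize the path $P_3$ among the roots. A cleaner route, which I would try first, is to strengthen what we apply. Take a non-root vertex $v$ and form the $4$-rooted graph $G'$ obtained by contracting a suitable edge, or by considering $G-v$ with $v$'s neighbourhood. Then use Observation~\ref{obs:contrdens} and Observation~\ref{obs:4sum} to keep positive density, and induct on $n(G)$.

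Concretely, I would prove the lemma by induction on $n(G)$, handling the following cases in order.

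\textbf{(a) A proper root $(\le\!3)$-separation $(A,B)$ with $B\not\subseteq A$.} By $4$-lightness its right-hand side has $\rhoFour\le 0$. Hence the left-hand side $G'$, obtained by replacing $B\setminus A$ with a clique on $A\cap B$ (possible by Theorem~\ref{thm:D-le4} for $k\le3$, or just by Menger-type contraction), still has positive density and is smaller. We conclude by induction together with Observation~\ref{obs:seprepl}.

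\textbf{(b) No such separation, and some edge $uv$ with $v$ non-root and $t_G(uv)\le 2$.} Contract it. By Observation~\ref{obs:contrdens} the density does not drop below $1$. We must check $4$-lightness of $G/uv$, which is the delicate point. A new separation with positive right-hand side would lift to a separation of $G$ of order at most $4$ containing both $u$ and $v$ on the boundary. We handle it by choosing the edge carefully, or by splitting along that separation.

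\textbf{(c) Every edge with a non-root end lies in at least $3$ triangles, counting root-root edges as present.} Pick a non-root vertex $v$ with all four roots in its closure. If $v$ is adjacent to all roots, take $y=v$. The remaining non-root vertices must then connect roots to form $P_3$, which follows from internal $4$-connectivity: two disjoint paths between appropriate root pairs avoiding $v$ exist by Menger, since $n(G)\ge 2$ when $\rhoFour>0$ and there are edges from $v$. If no vertex sees all roots, the triangle condition forces a dense neighbourhood, and we contract $v$'s neighbourhood to obtain $y$.

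The main obstacle is preserving $4$-lightness under contraction in step (b); this is exactly the difficulty the section describes. To handle it I would choose the contracted edge within a minimal positive fragment.
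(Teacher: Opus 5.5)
Your proposal is an outline rather than a proof, and the steps it leaves open are exactly the hard ones.

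In (a), Theorem~\ref{thm:D-le4} cannot be applied to $R_{A,B}$. $4$-lightness gives $\rhoFour(R_{A,B})\le 0$, while the theorem needs positive density. In fact $R_{A,B}$ need not contain $K_{|A\cap B|}$ as a rooted minor at all: let $B\setminus A$ be a single vertex whose three neighbours form an independent set $A\cap B$.

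More seriously, in neither (a) nor (b) do you show that the reduced graph is $4$-light. A root $(\le\!3)$-separation of the reduced graph with positive right-hand side lifts to a separation of $G$ that either has order four, which $4$-lightness of a $4$-rooted graph does not control, or whose right-hand side can have smaller $4$-density. In (a) the added clique edges are lost and $\rhoFour(G,B\setminus A)\le 0$ is added; in (b) uncontracting $uv$ lowers the density by $3-t_G(uv)\ge 1$. So no contradiction with the $4$-lightness of $G$ arises. You call this ``the main obstacle'' but only suggest choosing the edge carefully. The paper's tools for this issue (Section~\ref{sec:redu}, in particular Lemma~\ref{lemma:preserve-bilight}) are stated for unrooted and $5$-rooted graphs, and are themselves built on the lemma you are proving.

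Step (c) is also unsupported. Internal $4$-connectivity only excludes separations with all roots on one side, so it says nothing about connecting roots to each other in $G-v$. Moreover, two disjoint root-to-root paths would give $2K_2$ on the roots, not the required $P_3+K_1$. What you need there is a connected subgraph of $G-v$ containing three roots and avoiding the fourth, and that has to come from density. The subcase in which no vertex sees all four roots is not argued at all.

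The missing idea is to avoid contractions and clique replacements entirely, and to induct only on subgraphs, which inherit $4$-lightness for free. The paper first uses density to show that for every pairing $\{Z_1,Z_2\}$ of the roots there are two disjoint $Z_1$--$Z_2$ paths. Otherwise a $(\le\!1)$-separation bounds $\rhoFour(G)$ by the sum of the $4$-densities of two right-hand sides with at most three roots each, both non-positive. It then takes a component $C$ of $G-X_G$ with $\rhoFour(G,C)>0$, so that $\bd_GC=X_G$. In $G[C\cup X_G]$ it chooses a tree whose leaves are exactly the four roots, with the path $P$ between its branch vertices $v_1,v_2$ as short as possible.

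If $\{v_1,v_2\}$ separates the roots two and two, there are two possibilities. Either all nine possible edges of $G[X_G\cup\{v_1,v_2\}]$ are present and the dart is immediate, or one side, rooted at its two roots together with $v_1$ and $v_2$, has positive $4$-density. Its root separations are root separations of $G$ with the same right-hand sides, so it is $4$-light, and induction yields a dart in it. The two disjoint paths from the first step then carry this dart to $X_G$. If $\{v_1,v_2\}$ does not separate the roots in this way, two further paths in $G-\{v_1,v_2\}$, together with the minimality of $P$, give an explicit dart model with $\mu(y)=V(P)$.
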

\begin{proof}
We prove the claim by induction on $|V(G)|$.
Without loss of generality, we can assume that $X_G$ is an independent set, since deleting edges between roots affects neither $4$-lightness of $G$ nor the value of $\rhoFour(G)$.

Suppose first that there exists a partition $\{Z_1,Z_2\}$ of $X_G$ into sets of size two such that $G$ does not contain two vertex-disjoint paths from $Z_1$ to $Z_2$.
By Menger's theorem, there exists a separation $(B_1,B_2)$ of $G$ of order at most one such that $Z_1\subseteq B_1$ and $Z_2\subseteq B_2$.
For $i\in\{1,2\}$, let $H_i=\roots{G[B_i]}{(Z_i\cup (B_1\cap B_2))}$, and let $q$ be the number of edges of $G[X_G\cup (B_1\cap B_2)]$.
Since $X_G$ is an independent set in $G$, we have $q=0$ if $B_1\cap B_2\subseteq X_G$ and $q\le 4$ if $B_1\cap B_2$ consists of a single non-root vertex.
Observe that
$$\rhoFour(G)=\rhoFour(H_1)+\rhoFour(H_2)+q-4|B_1\cap B_2\setminus X_G|\le \rhoFour(H_1)+\rhoFour(H_2).$$
Since $G$ is $4$-light and $|X_{H_1}|,|X_{H_2}|\le 3$, observe furthermore that $\rhoFour(H_1),\rhoFour(H_2)\le 0$,
and thus $\rhoFour(G)\le 0$.  This is a contradiction.  Therefore, for every partition $\{Z_1,Z_2\}$ of $X_G$ into sets of size two,
there exist two vertex-disjoint paths from $Z_1$ to $Z_2$ in $G$.

Let $C_1$, \ldots, $C_m$ be the vertex sets of the components of $G-X_G$.  Note
that
$$\sum_{i=1}^m\rhoFour(G,C_i)=\rhoFour(G)>0,$$
and thus there exists $i\in\{1,\ldots,m\}$ such that $\rhoFour(G,C_i)>0$.
Since $G$ is $4$-light, this implies that $|\bd_GC_i|\ge 4$, and thus $\bd_GC_i=X_G$.
Consequently, $G[C_i\cup X_G]$ contains a tree $T$ whose leaves are exactly the four roots.
Note that $T$ contains a unique path $P$ between its vertices of degree at least three
(the path $P$ can also consist of just a single vertex of degree four in $T$).
Let us choose the tree~$T$ so that the path $P$ is as short as possible, and let $v_1$ and $v_2$ be its
ends (where $v_1=v_2$ is possible).  Let $X_G=\{x_1,x_2,y_1,y_2\}$, where $T$ consists
of $P$ and of pairwise edge-disjoint paths $X_1$ from $x_1$ to $v_1$, $Y_1$ from $y_1$ to $v_1$, $X_2$ from $x_2$ to $v_2$, and $Y_2$ from $y_2$ to $v_2$.

Suppose now that $G$ has a separation $(A_1,A_2)$ such that $A_1\cap A_2=\{v_1,v_2\}$ and $|X_G\cap A_1\setminus A_2|=|X_G\cap A_2\setminus A_1|=2$.
For $i\in \{1,2\}$, let $G_i$ be the $4$-rooted graph $\roots{G[A_i]}{(X_G\cap A_i)\cup \{v_1,v_2\}}$, and let $p$ be the number of edges of $G[X_G\cup \{v_1,v_2\}]$ (all incident with $v_1$ or $v_2$).
Note that $p\le 9$.  If $p=9$, then letting $\mu(x_1)=\{x_1\}$, $\mu(x_2)=\{x_2\}$, $\mu(y_1)=\{y_1\}$, $\mu(y)=\{v_1\}$, and $\mu(y_2)=\{y_2,v_2\}$ gives us
an $\id$-rooted model of a dart with vertex set $X_G\cup \{y\}$ in $G$.  Hence, suppose that $p\le 8$.
Observe that
$$\rhoFour(G)=\rhoFour(G_1)+\rhoFour(G_2)+p-8\le \rhoFour(G_1)+\rhoFour(G_2),$$
and since $\rhoFour(G)>0$, we can by symmetry assume that $\rhoFour(G_1)>0$.
As we have argued before, $G$ contains two vertex-disjoint paths from $X_G\cap A_1$ to $X_G\cap A_2$; let $P_1$ and $P_2$ be their segments from $\{v_1,v_2\}$ to $X_G\cap A_2$.
Observe that every root separation of $G_1$ corresponds to a root separation of $G$ with the same right-hand side, and since $G$ is 4-light, it follows that $G_1$ is 4-light as well.
By the induction hypothesis, $G_1$ contains a dart as a rooted minor.  By combining the corresponding model with the paths $P_1$ and $P_2$,
we see that a dart is also a rooted minor of $G$.

Therefore, we can assume that $G$ has no such separation $(A_1,A_2)$.  It follows that $G-\{v_1,v_2\}$ contains a path $Q$ from $\{x_1,y_1\}$ to $\{x_2,y_2\}$.
Let $Q_0$ be a minimal segment of this path between $X_1\cup Y_1$ and $X_2\cup Y_2$.  From the minimality of $P$,
we see that $Q_0$ is disjoint from $P$.  Hence, we can without loss of generality assume that $Q_0$ has one end on the path $X_1-v_1$ and the other end on the path $X_2-v_2$;
let $q$ denote the end of $Q_0$ on $X_2-v_2$.

Similarly, $G-\{v_1,v_2\}$ contains a path $R$ from $\{x_1,x_2\}$ to $\{y_1,y_2\}$.
Let $R_0$ be a minimal segment of $R$ between $X_1\cup X_2\cup Q_0$ and $Y_1\cup Y_2$.
From the minimality of $P$, we see that $R_0$ is disjoint from $P$.
Hence, we can without loss of generality assume that $R_0$ has one end on $(X_1-v_1)\cup (Q_0-q)$.
Let $r$ denote the other end of $R_0$.
Let us define $\mu(x_1)=V(X_1\cup Q_0\cup R_0)\setminus \{v_1,q,r\}$, $\mu(x_2)=V(X_2-v_2)$, $\mu(y_1)=V(Y_1-v_1)$, $\mu(y_2)=V(Y_2-v_2)$, and $\mu(y)=V(P)$.
Then $\mu$ is an $\id$-rooted model of a dart with vertex set $X_G\cup \{y\}$ in $G$.
\end{proof}

Thus, our aim will actually be to eliminate (root) $(\le\!4)$-separations whose right-hand side can be contracted
to a clique or to a dart.  More precisely, a $(\le\!4)$-fragment $Y$ in a rooted or unrooted graph $G$ is \emph{reducible} if
\begin{itemize}
\item[(i)] $\rhoFour(G,Y)\le 0$,
\item[(ii)] if $G$ is an unrooted graph then $|V(G)\setminus Y|\ge 3$, and
\item[(iii)] either
\begin{itemize}
\item[(a)] $R^G_Y$ contains $K_{|\bd_GY|}$ as a rooted minor, or
\item[(b)] $|Y|\ge 2$, $|\bd_GY|=4$, and $R^G_Y$ contains a dart as a rooted minor.
\end{itemize}
\end{itemize}
In the case (a), let $D$ be the rooted clique with vertex set and the root set both equal to $\bd_GY$, and in the case (b) let $D$ be a dart with $X_D=\bd_GY$ and otherwise disjoint from $G$
chosen so that $D$ is an $\id$-rooted minor of $R^G_Y$.  The \emph{$Y$-reducent} of $G$ is defined as the (rooted) graph $(G-Y)\cup D$; thus, the $Y$-reducent is an ($\id$-rooted) minor of $G$.
Let us remark that $\bd_GY=\emptyset$ is allowed and in that case the $Y$-reducent is equal to $G-Y$.

Let us comment a bit on the assumptions of reducibility.  We have already discussed the rationale behind the choice of the minors in (iii);
the assumption on $|Y|$ means that the reduction actually decreases the number of vertices of the graph.
The condition (i) ensures that performing the reduction does not decrease the 4-density of the whole graph, and thus
e.g. in the context of Theorem~\ref{thm:density} the reducent still satisfies the $\rhoFour\ge -7$ assumption.
The condition (ii) is needed to prevent us from completely erasing the whole graph (and, more specifically,
to preserve the $n\ge 3$ assumption of Theorem~\ref{thm:density}).

Thus, it remains to argue that the reduction does not violate the $4$-bilightness assumption of Theorem~\ref{thm:density}
and the $4$-lightness assumption of Theorem~\ref{thm:VH}.  The argument is essentially the same for both Theorems
and we give it in the rest of the section.  To avoid repetitions, it will be convenient to extend the notation around bifragments
to rooted graphs.  A \emph{$(\le\!4)$-bifragment} in a rooted graph $G$ is a pair $(S,\emptyset)$, where $S$ is a $(\le\!4)$-fragment. 
Let us recall that in a rooted graph, fragments cannot contain roots.
The $(\le\!4)$-bifragment $(S,\emptyset)$ is \emph{dense} if $\rhoFour(G,S)>0$; and
a rooted graph $G$ is $4$-bilight if it does not have any dense $(\le\!4)$-bifragment.
Note that with this definition, a $5$-rooted graph is $4$-bilight if and only if it is $4$-light.
Let us start with a technical lemma.
\begin{lemma}\label{lemma:simplify-dense-bifragment}
Let $H$ be an unrooted or $5$-rooted graph and let $U$ be a set of vertices of $H$ such that either
\begin{itemize}
\item[(i)] $U$ induces a clique in $H$, or
\item[(ii)] $|U|=5$, there exists a vertex $y\in U$ of degree four adjacent in $H$ exactly to the vertices in $U\setminus \{y\}$,
and $H[U\setminus\{y\}]$ contains a 3-vertex path as a subgraph.
\end{itemize}
If $H$ is not $4$-bilight, then it contains a dense $(\le\!4)$-bifragment $(S,T)$ such that either
\begin{itemize}
\item $U\cap (S\cup T)=\emptyset$, or
\item $U\cap S\neq\emptyset$, $|U\setminus(S\cup\bd_HS)|\le 1$, $U\cap T=\emptyset$, and the rooted graph $R^H_S$ is 4-light.
\end{itemize}
\end{lemma}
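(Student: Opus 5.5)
We choose the dense $(\le\!4)$-bifragment $(S,T)$ extremally: among all dense $(\le\!4)$-bifragments of $H$, minimize $|S|+|T|$, and subject to that, make the interaction with $U$ as small as possible. In the rooted case $T=\emptyset$ and everything below applies to $S$ alone.

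The first step is to show that $U$ cannot meet both $S$ and $T$. If $U$ is a clique, this holds because there are no edges between $S$ and $T$. In case (ii), suppose $U$ meets both. Then $y\notin S\cup T$, since all neighbors of $y$ would otherwise lie in $S\cup\bd_HS$ while $U$ also meets $T$. So $U\setminus\{y\}$ splits between $S$ and $T$ with no edges across, and the path on three vertices must lie inside $S\cup\bd S\cup T\cup\bd T$ in a constrained way. Here we use the degree-four vertex $y$: it lies in $\bd S\cap\bd T$, so we can try to move $y$ into $S$. This adds at most its four edges and costs $4$ in $\rhoFour$, so density does not increase. That goes the wrong way, so instead we use that $U\setminus\{y\}$ meeting both sides, together with the path edges, forces some vertex of $U$ into a boundary. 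The boundary counts then get tight. The goal is to derive that the separator is too small, or that we can swap to a smaller bifragment.

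The second step handles the case where, by symmetry, $U$ meets $S$ but not $T$. We need $|U\setminus(S\cup\bd S)|\le1$. In case (i) this is immediate: $U\cap S\ne\emptyset$ and $U$ is a clique, so $U\subseteq S\cup\bd S$. In case (ii), a vertex of $U$ in $S$ has all its $U$-neighbors in $S\cup\bd S$. Using the path and the fact that $y$ is adjacent to all of $U\setminus\{y\}$, at most one vertex can escape. Otherwise $y$ would lie in $\bd S$ with two non-neighbors of the $S$-vertex outside, which contradicts the structure once we use that $y\in S$ or $y\in\bd S$ and the adjacency of $y$ to all others.

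The third step, the main one, is proving that $R^H_S$ is $4$-light. Suppose some fragment $Y$ of $R^H_S$ (so $Y\subseteq S$) has $|\bd Y|\le 3$ and $\rhoFour>0$. Then $(Y,T)$ is a dense $(\le\!4)$-bifragment in $H$ with $|Y|<|S|$, since $\bd_HY\subseteq\bd_{R_S}Y$. By minimality this is impossible, unless $Y$ misses $U$ while we needed $S$ to meet $U$. But if $Y\cap U=\emptyset$ and $T\cap U=\emptyset$, then $(Y,T)$ satisfies the first outcome, so minimality is taken over all dense bifragments and we get a contradiction anyway. In the rooted case $T=\emptyset$, and the same argument works.

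The real remaining issue is therefore the first step together with the choice of ordering. If the minimal bifragment meets $U$ on both sides, or violates the $\le1$ condition, we must produce a smaller or better one. The tool is uncrossing: when $U$ straddles, or when two or more vertices of $U$ lie beyond $\bd S$, we replace $S$ by $S\setminus U'$ or $S\cap(\ldots)$ for suitable $U'$, using submodularity of $|\bd\cdot|$ and supermodularity of $\rhoFour(H,\cdot)$. The clique, or the structure of $y$, guarantees that the complementary piece has a large boundary, which forces the other piece to stay dense with boundary at most $4$. I expect this uncrossing computation, especially in case (ii) with the dart-like structure, to be the main obstacle.
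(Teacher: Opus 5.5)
Your handling of case (i) is correct, and so is your third step: $(Y,T)$ is a dense $(\le\!4)$-bifragment with $|Y|+|T|<|S|+|T|$, which contradicts the primary minimality directly, so $U$ plays no role there.

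\textbf{The gap} is in case (ii), in both your first and second steps. The claims that $U$ cannot meet both $S$ and $T$, and that at most one vertex of $U$ lies outside $S\cup\bd_HS$, do not follow from the structure of $U$. Write $U\setminus\{y\}=\{a,b,c,d\}$ with path $abc$, and suppose $d$ has no neighbor in $U$ other than $y$. Nothing structural rules out $U\cap S=\{d\}$, $y\in\bd_HS$, and $a,b,c\notin S\cup\bd_HS$, possibly with $a\in T$. So your assertion that ``at most one vertex can escape'' is false as a structural statement, and the straddling configuration is not structurally excluded either. The uncrossing via submodularity that you defer to the end is too vague to fill this, and it is not what is needed.

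\textbf{The missing idea} is a one-vertex exchange that uses minimality of $|S|+|T|$ directly. Suppose $Z\in\{S,T\}$ has $U\cap Z=\{z\}$ and some $u\in U$ lies outside $Z\cup\bd_HZ$.
\begin{itemize}
\item Then $uz\notin E(H)$, so we are in case (ii) with $u,z\neq y$. Hence $y\in\bd_HZ$, and $z$ is the only neighbor of $y$ in $Z$, because $N(y)=U\setminus\{y\}$.
\item Pass to $Z\setminus\{z\}$. It is nonempty, since a dense $(\le\!4)$-fragment has at least two vertices.
\item The vertex $z$ enters the boundary but $y$ leaves it, so the boundary does not grow.
\item $\rhoFour$ changes by $4-m\ge 0$, where $m\le|\bd_HZ|\le 4$ is the number of neighbors of $z$ in $\bd_HZ$.
\end{itemize}
This contradicts minimality, so $|U\cap Z|=1$ forces $U\subseteq Z\cup\bd_HZ$.

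Now label so that $|U\cap S|\ge|U\cap T|$ and assume $U\cap S\neq\emptyset$ (otherwise the first outcome holds).
\begin{itemize}
\item If $|U\cap S|=1$, nothing of $U$ escapes $S\cup\bd_HS$.
\item If $|U\cap S|\ge 2$ and something escapes, then $y\in\bd_HS$ and at least two of $a,b,c,d$ lie in $S$. The 3-vertex path then puts at least three of them into $S\cup\bd_HS$, so at most one vertex of $U$ escapes.
\item Since $T\cap(S\cup\bd_HS)=\emptyset$, this gives $|U\cap T|\le 1$.
\item The exchange property applied to $T$ gives $U\cap T=\emptyset$, because $U$ meets $S$ and so $U\not\subseteq T\cup\bd_HT$.
\end{itemize}
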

\begin{proof}
Choose a dense $(\le\!4)$-bifragment $(S,T)$ in $H$ so that $|S|+|T|$ is smallest possible,
and so that $|U\cap S|\ge |U\cap T|$.  Note that the second condition is trivially true when $H$ is a rooted graph.
The minimality of $|S|+|T|$ ensures the following property ($\dag$):
\begin{center}
For each $Z\in\{S,T\}$, if $|U\cap Z|=1$, then $U\subseteq Z\cup \bd_HZ$.
\end{center}
Indeed, suppose for a contradiction that say $U\cap S$ consists of a single vertex~$z$, and that there also exists a vertex $u\in U\setminus (S\cup \bd_HS)$.
Then $uz\not\in E(H[U])$, and thus we are in the case (ii) and $u,z\in U\setminus\{y\}$.  Since both $u$ and $z$ are adjacent to $y$, it follows that $y\in \bd_H S$.
Let $S'=S\setminus \{z\}$.  Note that $\bd_HS'\subseteq (\bd_HS\cup \{z\})\setminus \{y\}$, and thus $|\bd_HS'|\le |\bd_HS|$.  Moreover, let $m\le 4$ denote the number of
neighbors of $z$ in $\bd_HS$ and note that $\rhoFour(H,S')=\rhoFour(H,S)+4-m\ge \rhoFour(H,S)$.  Therefore, $(S',T)$ is a dense $(\le\!4)$-bifragment of $H$, and since $|S'|<|S|$,
this contradicts the choice of $(S,T)$.

If $U\cap S=\emptyset$, then since $|U\cap S|\ge |U\cap T|$, we also have $U\cap T=\emptyset$, and thus the first outcome holds. 
Hence, we can assume that $|U\cap S|\ge 1$.

This implies that $|U\setminus (S\cup\bd_HS)|\le 1$.  Indeed, if $U\not\subseteq S\cup\bd_HS$, then
($\dag$) implies $|U\cap S|\neq 1$, and thus $|U\cap S|\ge 2$.
Moreover, the vertices in $U\cap S$ are non-adjacent to those in $U\setminus (S\cup\bd_HS)$, and thus we are in the case (ii), $y\in \bd_HS$, and $|(U\setminus \{y\})\cap S|\ge 2$.
Since $|U\setminus\{y\}|=4$ and $H[U\setminus\{y\}]$ contains a 3-vertex path, this implies that at least three vertices of $U\setminus\{y\}$ belong to or are adjacent to a vertex of $(U\setminus \{y\})\cap S$
and thus $|(U\setminus \{y\})\cap (S\cup\bd_HS)|\ge 3$ and $|U\setminus (S\cup\bd_HS)|=|(U\setminus \{y\})\setminus (S\cup\bd_HS)|\le 1$.

In particular, we have $|U\cap T|\le 1$. Moreover, note that $U\cap S\neq \emptyset$ implies $U\not\subseteq T\cup \bd_HT$,
and thus by ($\dag$) we have $|U\cap T|\neq 1$.  It follows that $U\cap T=\emptyset$.

Finally, note that the minimality of $|S|+|T|$ ensures that the rooted graph $R^H_S$ is 4-light, as otherwise we could by Observation~\ref{obs:heavyfrag}
replace $S$ by a proper subfragment of $S$.  Therefore, the second outcome of the lemma holds.
\end{proof}

We can now give the main technical result of this section, showing that the reduction of a (maximal) reducible $(\le\!4)$-fragment preserves $4$-bilightness.

\begin{lemma}\label{lemma:preserve-bilight}
Let $G$ be an unrooted or $5$-rooted $4$-bilight graph, let $Y$ be an inclusionwise-maximal reducible $(\le\!4)$-fragment of $G$, and let $H$ be the $Y$-reducent of $G$.
Then $H$ is $4$-bilight and $\rhoFour(H)\ge\rhoFour(G)$.  Moreover, if $E(H)\setminus E(G)$ contains an edge between a non-root vertex of degree at least five
and a root vertex, then $\rhoFour(H)>\rhoFour(G)$.
\end{lemma}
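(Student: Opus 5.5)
The plan is to split the statement into the density bound and the $4$-bilightness of $H$. The density bound comes from direct counting. For bilightness, I would argue by contradiction, using Lemma~\ref{lemma:simplify-dense-bifragment} to locate a dense bifragment of $H$ in a controlled position relative to the reduced part. I would then lift it back to $G$, where it contradicts either the $4$-bilightness of $G$ or the maximality of $Y$.

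\emph{Density.} Let $D$ be the clique or dart used in the reduction. Passing from $G$ to $G-Y$ changes the $4$-density by $-\rhoFour(G,Y)\ge 0$, by condition (i) of reducibility. Adding $D$ then contributes the new edges in $E(D)\setminus E(G)$. In the dart case it also adds the vertex $y$, which brings $4$ edges and costs $4$, so it is neutral. Hence $\rhoFour(H)\ge\rhoFour(G)+|E(D)\setminus E(G)|$, where in the rooted case we only count new edges with a non-root end. An added edge between a root and a non-root vertex is such an edge, so it gives the strict inequality in the ``moreover'' part.

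\emph{Bilightness, setup.} Suppose $H$ is not $4$-bilight. I would apply Lemma~\ref{lemma:simplify-dense-bifragment} in one of two ways:
\begin{itemize}
\item in case (a), with $U=\bd_GY$, which is case (i) of that lemma since $\bd_GY$ is a clique in $H$;
\item in case (b), with $U=\bd_GY\cup\{y\}$, which is case (ii) of that lemma, since $y$ has degree four and the dart contains a $3$-vertex path on its roots.
\end{itemize}
This yields a dense $(\le\!4)$-bifragment $(S,T)$ of $H$.

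\emph{Bilightness, first outcome.} If $U\cap(S\cup T)=\emptyset$, then all neighbours of $S$ and of $T$ lie in $G-Y$. Indeed, every edge of $D$ has both ends in $U$, and no vertex of $Y$ is adjacent to $G-(Y\cup\bd_GY)$. It follows that $\bd_GS=\bd_HS$, $\rhoFour(G,S)=\rhoFour(H,S)$, and similarly for $T$. Thus $(S,T)$ is a dense $(\le\!4)$-bifragment of $G$, a contradiction.

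\emph{Bilightness, second outcome.} Otherwise $U\cap S\ne\emptyset$, $U\cap T=\emptyset$, $|U\setminus(S\cup\bd_HS)|\le1$, and $R^H_S$ is $4$-light. Set $S'=(S\setminus\{y\})\cup Y$, dropping $y$ only in the dart case.
\begin{itemize}
\item \emph{Boundary of $S'$.} In the clique case, $U\cap S\neq\emptyset$ forces $U\subseteq S\cup\bd_HS$, so $\bd_GS'\subseteq\bd_HS$. In the dart case with an exceptional vertex $u\in U\setminus(S\cup\bd_HS)$, we get $y\in\bd_HS$, because $u$ is adjacent to $y$ and $y\notin S$. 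Then $\bd_GS'=(\bd_HS\setminus\{y\})\cup\{u\}$, so $|\bd_GS'|\le4$ still holds.
\item \emph{The set $T$.} Since $T$ avoids $U$, it remains a fragment of $G$ with the same boundary and density, and it is still non-adjacent to $S'$.
\item \emph{If $\rhoFour(G,S')>0$.} Then $(S',T)$ is a dense bifragment of $G$, a contradiction.
\item \emph{If $\rhoFour(G,S')\le0$.} Then I claim $S'$ is a reducible fragment strictly containing $Y$, contradicting maximality. Strictness fails only when $S\cap U=\{y\}$ and $S'=Y$, but then $\rhoFour(H,S)=\rhoFour(H,\{y\})=0$, so $S$ is not dense. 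For condition (iii), $R^H_S$ is $4$-light with $\rhoFour(R^H_S)=\rhoFour(H,S)>0$. So by Theorem~\ref{thm:D-le4} and Lemma~\ref{lem:dartexists} it has a rooted clique or dart minor, and $|S'|\ge|Y|+1\ge2$. Moreover, $R^H_S$ is an $\id$-rooted minor of $R^G_{S'}$ by Observation~\ref{obs:seprepl}, applied by contracting $R^G_Y$ back onto $D$. For condition (ii), $V(G)\setminus S'$ contains $T\cup\bd_GT$; this should have at least three vertices because a dense fragment cannot have a tiny closure, and I would check this by a short count.
\end{itemize}

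\emph{Main obstacle.} I expect the delicate step to be the dart case with the exceptional vertex $u$. There the boundaries of $S$ and $S'$ differ, with $y$ swapped for $u$. One must verify that the rooted minor of $R^H_S$ transfers to $R^G_{S'}$ with roots in the correct positions. This requires contracting a path through $Y$ from $u$ to play the role of $y$, using that the dart model inside $R^G_Y$ maps $y$ to a connected set adjacent to all of $\bd_GY$.
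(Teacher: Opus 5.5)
Your overall route is the same as the paper's. You apply Lemma~\ref{lemma:simplify-dense-bifragment} with $U=V(D)$, rule out the first outcome, and in the second outcome pass to $S'=(S\setminus\{y\})\cup Y$, which is exactly the paper's $Y'$. Then either $(S',T)$ is a dense $(\le\!4)$-bifragment of $G$, or $S'$ contradicts the maximality of $Y$.

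The gap is in your verification of condition (iii) for $S'$. You assert that $R^H_S$ is an $\mathrm{id}$-rooted minor of $R^G_{S'}$, which needs $\partial_G S'=\partial_H S$. The only exception you treat is a vertex $u\in U\setminus(S\cup\partial_H S)$. There is a second exception you never discuss: $D$ is a dart, $V(D)\subseteq S\cup\partial_H S$, and $y\in\partial_H S$, i.e.\ some root of $D$ lies in $S$ but $y$ does not. Nothing in Lemma~\ref{lemma:simplify-dense-bifragment} excludes this.
\begin{itemize}
\item Here $S'=S\cup Y$ and $\partial_G S'=\partial_H S\setminus\{y\}$, which has at most three vertices.
\item Since $y\notin V(G)$ is a root of $R^H_S$, your claimed minor relation is false. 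Observation~\ref{obs:seprepl} only yields $R^H_{S\cup\{y\}}$, in which $y$ is a non-root, as an $\mathrm{id}$-rooted minor of $R^G_{S'}$.
\item Because $|\partial_G S'|\le 3$, reducibility now requires a rooted \emph{clique} on $\partial_H S\setminus\{y\}$; the dart option (b) is unavailable.
\end{itemize}

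To close this gap, you must convert the minor $M$ of $R^H_S$ given by Theorem~\ref{thm:D-le4} or Lemma~\ref{lem:dartexists} into a rooted $K_{|\partial_H S|-1}$ on $\partial_H S\setminus\{y\}$ in $R^H_{S\cup\{y\}}$. For a clique this just means discarding the bag of the root mapped to $y$. When $|\partial_H S|=4$, however, you may only have a dart, with root path $abc$, isolated root $d$ and apex $y'$. You then need a small case analysis on which root $y$ plays:
\begin{itemize}
\item if $y$ plays $b$, merge the bag of $b$ into that of $c$, and the bag of $y'$ into that of $d$;
\item if $y$ plays $a$ or $c$, merge $y'$ into $d$;
\item if $y$ plays $d$, merge $y'$ into $c$.
\end{itemize}
This is the second bullet of the paper's construction of $S_1$ and $M_1$, and it is missing from your proposal. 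Your treatment of the exceptional vertex $u$, putting $u$ into the bag containing $y$, does match the paper's first bullet.

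A smaller point concerns the ``moreover'' part. The four new edges at $y$ can join a non-root vertex to roots, yet they give no net gain in $4$-density. They are excluded precisely by the hypothesis that the non-root end has degree at least five, whereas $\deg_H y=4$; your argument should invoke this hypothesis explicitly.
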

\begin{proof}
Let $D$ be as in the definition of the $Y$-reducent and in case that $D$ is a dart, let $y$ denote the unique non-root vertex of $D$.
Let $p$ be the number of edges in $E(H)\setminus E(G)$ that join a non-root vertex of degree at least five and a root vertex.
When $D$ is a dart, then such edges are not incident with $y$, since $y\not\in X_H$ has degree four.
Note that $\rhoFour(D)=0$ and that
$$\rhoFour(H)\ge \rhoFour(G)-\rhoFour(G,Y)+\rhoFour(D)+p\ge \rhoFour(G)+p,$$
since reducibility of $Y$ implies $\rhoFour(G,Y)\le 0$.

Therefore, it suffices to argue that $H$ is $4$-bilight.  Suppose for a contradiction that this is not the
case, and thus $H$ contains a dense $(\le\!4)$-bifragment $(S,T)$ satisfying one of the two outcomes of Lemma~\ref{lemma:simplify-dense-bifragment}
with $U=V(D)$.  The outcome $V(D)\cap (S\cup T)=\emptyset$ is not possible, since then $(S,T)$ would also be a dense $(\le\!4)$-bifragment in $G$.
Hence, we have $V(D)\cap S\neq\emptyset$, $|V(D)\setminus(S\cup\bd_HS)|\le 1$, $V(D)\cap T=\emptyset$, and the rooted graph $R^H_S$ is 4-light.

If $|\bd_HS|\le 3$, then let $M$ be the rooted clique with $|\bd_HS|$ vertices and with $X_M=V(M)$, and if $|\bd_HS|=4$, then let $M$ be a dart.
Since $R^H_S$ is 4-light and $\rhoFour(R^H_S)=\rhoFour(H,S)>0$,
Theorem~\ref{thm:D-le4} in the former case and Lemma~\ref{lem:dartexists} in the latter case imply that there exists a bijection $\pi:X_M\to \bd_HS$
such that $M$ has a $\pi$-rooted model $\mu$ in $R^H_S$.

Let us now extend the fragment $S$ slightly, while still ensuring the existence of a rooted minor related to $M$.  Specifically, let $S_1$ and $M_1$ be defined as follows:
\begin{itemize}
\item If $|V(D)\setminus (S\cup\bd_HS)|=1$, then let $v$ be the unique vertex in $V(D)\setminus (S\cup\bd_HS)$, and note that $D$ is a dart and $y\in \bd_HS$.
Let $S_1=S\cup\{y\}$ and note that $\bd_HS_1=(\bd_HS\setminus\{y\})\cup \{v\}$.  Thus, $|\bd_HS_1|=|\bd_H S|$, and since $T\cap (\bd_HS\cup V(D))=\emptyset$, we have $(S_1\cup \bd_HS_1)\cap T=\emptyset$.
Moreover, $M_1=M$ is a rooted minor of $R^H_{S_1}$, since its rooted model can be obtained from $\mu$ by adding $v$ to the bag of $\mu$ containing $y$.
\item If $V(D)\subseteq S\cup\bd_HS$, $D$ is a dart, and $y\in \bd_HS$, then let $S_1=S\cup \{y\}$ and note that $\bd_HS_1=\bd_HS\setminus\{y\}$.  Hence $|\bd_HS_1|<|\bd_H S|$ and
$(S_1\cup \bd_HS_1)\cap T=\emptyset$.  Let $M_1$ be the rooted clique with $V(M_1)=X_{M_1}=X_M\setminus \{\pi^{-1}(y)\}$ and let $\pi_1$ be the restriction of $\pi$ to $X_{M_1}$,
and observe that $M_1$ is a $\pi_1$-rooted minor of $R^H_{S_1}$:  This is clear when $M$ is a clique.  When $M$ is a dart with $V(M)\setminus X_M=\{y'\}$ and with $M[X_M]$ containing the path $abc$ and isolated vertex $d$,
then we can define a $\pi_1$-rooted model $\mu_1$ of $M_1$ in $R^H_{S_1}$ as follows:
\begin{itemize}
\item If $y=\pi(a)$, then $\mu_1(b)=\mu(b)$, $\mu_1(c)=\mu(c)$, and $\mu_1(d)=\mu(d)\cup \mu(y')$.
\item If $y=\pi(b)$, then $\mu_1(a)=\mu(a)$, $\mu_1(c)=\mu(c)\cup \mu(b)$, and $\mu_1(d)=\mu(d)\cup \mu(y')$.
\item If $y=\pi(c)$, then $\mu_1(a)=\mu(a)$, $\mu_1(b)=\mu(b)$, and $\mu_1(d)=\mu(d)\cup \mu(y')$.
\item If $y=\pi(d)$, then $\mu_1(a)=\mu(a)$, $\mu_1(b)=\mu(b)$, and $\mu_1(c)=\mu(c)\cup \mu(y')$.
\end{itemize}
\item Otherwise, we have $V(D)\subseteq S\cup\bd_HS$ and $V(D)\setminus X_D\subseteq S$; we let $S_1=S$ and $M_1=M$.
\end{itemize}
In either case $(S_1,T)$ is a $(\le\!4)$-bifragment in $H$ such that $S\subseteq S_1$, $V(D)\subseteq S_1\cup\bd_HS_1$, $V(D)\setminus X_D\subseteq S_1$,
and $M_1$ is a rooted minor of $R^H_{S_1}$.  Note that $|S|\ge 2$, since $|\bd_HS|\le 4$ and $\rhoFour(H,S)>0$, and thus $|S_1|\ge 2$.

Let us now undo the reduction of $Y$.  More precisely, let $Y'=(S_1\setminus (V(D)\setminus X_D))\cup Y$.  Since $X_D=\bd_G Y$, we have $\bd_GY'=\bd_HS_1$. Moreover, $|Y'|\ge |S_1|-1+|Y|\ge |Y|+1\ge 2$.
It follows that $Y'$ is a $(\le\!4)$-fragment of $G$ such that $Y\subsetneq Y'$.
Since $M_1$ is a rooted minor of $R^H_{S_1}$, Observation~\ref{obs:seprepl}
implies that $M_1$ is also a rooted minor of $R^G_{Y'}$.  Moreover, when $G$ is an unrooted graph, we have $|V(G)\setminus Y'|\ge |T\cup\bd_G T|>3$,
since $\rhoFour(G,T)=\rhoFour(H,T)>0$.

Since $Y\subsetneq Y'$ and $Y$ is an inclusionwise-maximal reducible $(\le\!4)$-fragment of $G$,
it follows that the $(\le\!4)$-fragment $Y'$ is not reducible.  We have argued that it satisfies the conditions (ii) and (iii)
from the definition of reducibility, and thus (i) has to be false, that is, $\rhoFour(G,Y')>0$.
However, then $(Y',T)$ is a dense $(\le\!4)$-bifragment in $G$.  This is a contradiction, since $G$ is $4$-bilight.
\end{proof}

\section{Vampires and \Ktwofive{}'s}
\label{sec:vampire}

In this section, we aim to prove Theorem~\ref{thm:VH}.  For convenience, let us record a consequence of its outcome.
\begin{observation}\label{obs:vatodart}
Let $G$ be a $5$-rooted graph and let $Z$ be a proper subset of $X_G$.  If $G$ contains a vampire or \Ktwofive{}
as a rooted minor, then $n(G)\ge 2$ and $\roots{(G-(X_G\setminus Z))}{Z}$ contains either $K_{|Z|}$ or a dart (when $|Z|=4$) as a rooted minor.
\end{observation}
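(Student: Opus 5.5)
Let $\mu$ be a rooted model in $G$ of $W$, where $W$ is either a vampire or \Ktwofive{}, with non-root vertices $p,q$. The bags $\mu(p)$ and $\mu(q)$ are disjoint and non-empty. They contain no roots, since every root lies in the bag of the root vertex of $W$ it is mapped to. Hence $n(G)\ge 2$.

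For the second part, delete the roots outside $Z$. Formally, restrict $\mu$ to the vertices of $W$ other than the roots mapped to $X_G\setminus Z$. Since $X_G\setminus Z\neq\emptyset$, this shows that $W'=W-\pi^{-1}(X_G\setminus Z)$ is a rooted minor of $\roots{(G-(X_G\setminus Z))}{Z}$. It is enough to show that $W'$, which has root set $Z'$ of size $k=|Z|\le 4$ and non-root vertices $p,q$, contracts to $K_k$, or to a dart when $k=4$, rooted on $Z'$.

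In $W$, the vertex $p$ misses at most one root $x_2$, the vertex $q$ misses at most one root $x_1$, and these missed roots are distinct when they exist. I first treat $k\le 3$. Split $Z'=A\cup B$ where $A$ is the set of roots adjacent to $p$ and $B$ the set adjacent to $q$; every root lies in $A\cup B$.
\begin{itemize}
\item If $pq$ is an edge (the vampire case), contract $p$ into one root of $A$ and $q$ into one root of $B$. Each of the two bags then reaches every root in its set, and the bags are adjacent to each other. This gives a clique on up to two ``hub'' roots together with all others. That is not yet $K_3$ in general, so instead use the following.
\item For $k\le 3$ it is cleaner to contract the connected set $\{p,q\}$ (vampire) into a single root $r$. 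This bag is adjacent to all other roots, which gives a star centered at $r$. The edges among the other roots must come from a second non-root, which we no longer have.
\end{itemize}
So the right approach is to distribute: for $K_3$ on $\{a,b,c\}$, contract $p$ into $a$ and $q$ into $b$. If $p\sim b,c$, $q\sim a,c$ (or $p\sim q$), we get the edges $ab$, $ac$, $bc$. Such a choice of $a,b$ exists: pick $a\ne x_1$ adjacent to $q$... I will verify this by a short case check on where $x_1,x_2$ fall in $Z'$. For \Ktwofive{} it is immediate, since both $p$ and $q$ are adjacent to all roots. The cases $k\le 2$ are similar.

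For $k=4$ we need a dart: a new non-root $y$ adjacent to all four roots, plus a path $abc$ on the roots. Take $y$ to be the non-root of $W'$ with full adjacency to $Z'$. In \Ktwofive{} either one works. In a vampire, we have $|Z'|=4$ and at most one of $x_1,x_2$ is excluded. If $x_2\notin Z'$, take $y=p$; otherwise if $x_1\notin Z'$, take $y=q$. If both are in $Z'$, contract $pq$ and we need a different construction. Then contract the other non-root into a root $b$ to create edges from $b$ to its neighbors, producing a path $abc$ (in fact a star of degree $\ge 2$ suffices).

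The main obstacle is the case where both $x_1,x_2\in Z'$ and $pq$ is an edge. There I plan to set $y=q$, contract $p$ into $x_1$ (which $p$ sees), and use the edge $pq$ as well, checking that $q$ remains adjacent to $x_1$ through $p$. The remaining case checking is finite and routine.
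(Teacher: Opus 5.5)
Your plan is essentially the paper's proof: restrict the model to $W'$ and finish with an explicit contraction case analysis on $|Z|$. Your $k=4$ construction for the hard case (contract $p$ into $x_1$ and take $y=q$, which reaches $x_1$ through $pq$) is exactly the paper's contraction of $v_2z_2$.

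The one thing to fix is the deferred $k=3$ check. Your hint ``$a\neq x_1$'' fails when $Z'=\{x_1,x_2,w\}$ with $px_2,qx_1\notin E(W)$. Instead, take $c$ to be a root of $Z'\setminus\{x_1,x_2\}$, which is adjacent to both $p$ and $q$. Then contract $p$ and $q$ into the other two roots, each into one it is actually adjacent to; this is possible because $p$ and $q$ miss distinct roots. The edge $ab$ then comes from $pq$ for a vampire and from $p\sim b$ for the other graph.
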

\begin{proof}
Let $W$ be a vampire or \Ktwofive{} contained in $G$ as a rooted minor.
Clearly $n(G)\ge n(W)=2$. Note that $W[X_G]$ has no edges, and thus $W'=\roots{(W-(X_G\setminus Z))}{Z}$ is a rooted minor of $\roots{(G-(X_G\setminus Z))}{Z}$.
Hence, it suffices to show that either $K_{|Z|}$ or a dart (when $|Z|=4$) is a rooted minor of $W'$.
This is trivial if $|Z|\le 1$, and thus suppose that $|Z|\in \{2,3,4\}$.
Let $V(W')\setminus Z=\{v_1,v_2\}$, where $\deg_{W'} v_1\ge \deg_{W'} v_2$.
Let us now distinguish cases based on $|Z|$.
\begin{itemize}
\item If $|Z|=2$, then note that either $W'$ is connected (when $W$ is a vampire) or contains a vertex adjacent to all vertices of $Z$
(when $W$ is \Ktwofive{}).  In either case, we can contract a path between the two vertices of $Z$ through $W'$ to a single edge
and obtain $K_2$ as a rooted minor of $W'$.
\item Suppose next that $|Z|=3$.  Note that each vertex of $V(W')\setminus Z$ is adjacent to at least two vertices of $Z$.
In particular, let $z_2$ and $z_3$ be two neighbors of $v_2$ in $Z$, and let $z_1$ be the remaining vertex in $Z\setminus\{z_2,z_3\}$.
If $v_1$ is adjacent to all vertices of $Z$, then we can contract the edges $v_1z_1$ and $v_2z_2$ and obtain $K_3$ as a rooted minor
of $W'$.  Otherwise $W$ is a vampire, $v_1v_2\in E(W')$, and $v_1$ is adjacent to $z_1$ and one of $z_2$ and $z_3$.
By symmetry, we can assume that $v_1$ is adjacent to $z_3$.  Then we can again contract the edges $v_1z_1$ and $v_2z_2$
and obtain $K_3$ as a rooted minor of $W'$.
\item Finally, suppose that $|Z|=4$.  The vertex $v_2$ has at least three neighbors in $Z$, which we denote by $z_2$, $z_3$, and $z_4$.
The vertex $v_1$ is adjacent to at least two of them, say $z_3$ and $z_4$, as well as the remaining vertex $z_1$ in $Z\setminus\{z_2,z_3,z_4\}$.
Moreover, $v_1$ is adjacent to $v_2$ if $W$ is a vampire and to $z_2$ if $W$ is \Ktwofive{}.
Hence, by contracting the edge $z_2v_2$ of $W'$, we obtain a dart as a rooted minor.
\end{itemize}
\end{proof}

\begin{corollary}\label{cor:vatoredu}
Let $G$ be a $(\le\!5)$-rooted $4$-light graph and let $(A,B)$ be a root $5$-separation of $G$.
If $|X_G|\le 4$, then additionally assume that $\rhoFour(G)\le 0$.
If $R_{A,B}$ contains a vampire or \Ktwofive{} as a rooted minor, then either
\begin{itemize}
\item[(i)] $|X_G|=5$ and $G$ contains a vampire or \Ktwofive{} as a rooted minor, or
\item[(ii)] $G$ contains a reducible $(\le\!4)$-fragment $Y$ such that $B\setminus A\subseteq Y$.
\end{itemize}
\end{corollary}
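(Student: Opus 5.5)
The plan is to take an isolator of $(A,B)$ and split on its order $k$. Let $(C,D)$ be an isolator of $(A,B)$, so $C\subseteq A$, $B\subseteq D$, and $k=|C\cap D|\le |A\cap B|=5$ is the root connectivity of $(A,B)$. Since the root linkage consists of $k$ vertex-disjoint paths starting in $X_G$, we have $k\le |X_G|$.

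\emph{Case $k=5$.} Then $|X_G|=5$ and $(A,B)$ is linked to the roots. By Observation~\ref{obs:linkminor}, $R_{A,B}$ is a rooted minor of $G$, so the vampire or \Ktwofive{} in $R_{A,B}$ is a rooted minor of $G$. This gives outcome (i).

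\emph{Case $k\le 4$.} I aim to show that $Y=D\setminus C$ is a reducible fragment. Among the isolators, I choose one with $D$ inclusionwise maximal; in particular, if $k=|X_G|$, I take $(C,D)=(X_G,V(G))$, which is a valid isolator. Then $B\setminus A\subseteq Y$, because $B\subseteq D$ and $C\subseteq A$. Also $Y$ is non-empty and disjoint from $X_G$, since $n(R_{A,B})\ge 2$ by Observation~\ref{obs:vatodart}. Next, $\bd_GY\subseteq C\cap D$. I expect equality to follow from the minimality of the order: a vertex of $C\cap D$ with no neighbour in $Y$ could be removed from $D$, lowering the order, unless it lies in $B$. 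In that case a root path ending there forces it to have a neighbour in $B\setminus A\subseteq Y$, except when it is a root terminator; this is a small point to check. In any event $|\bd_GY|\le 4$.

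\emph{Condition (i) of reducibility.} If $|\bd_GY|<|X_G|$, then 4-lightness of $G$ gives $\rhoFour(G,Y)\le 0$ by Observation~\ref{obs:heavyfrag}. If $|\bd_GY|=|X_G|\le 4$, the choice $Y=V(G)\setminus X_G$ gives $\rhoFour(G,Y)=\rhoFour(G)\le 0$ by the extra hypothesis. If $|X_G|=5$, then $k\le 4<5$ and 4-lightness applies directly. Condition (ii) is vacuous because $G$ is rooted.

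\emph{Condition (iii) of reducibility.} By the remark after the definition of isolators, $(A\cap D,B)$ is a root separation of $R_{C,D}=R^G_Y$ with root connectivity $k$. Let $Z\subseteq A\cap B$ be the set of terminators of a root linkage. Observation~\ref{obs:linkminor} shows that $\roots{G[B]}{Z}$ is a rooted minor of $R^G_Y$. Since $|Z|=k\le 4$, $Z$ is a proper subset of the roots of $R_{A,B}$. Applying Observation~\ref{obs:vatodart} to $R_{A,B}$ then shows that $\roots{(G[B]-((A\cap B)\setminus Z))}{Z}$ contains $K_k$, or a dart when $k=4$, as a rooted minor. This graph is a subgraph of $\roots{G[B]}{Z}$, so composing the minors gives $K_{|\bd_GY|}$ or a dart as a rooted minor of $R^G_Y$. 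In the dart case we also need $|Y|\ge 2$, which holds because $|B\setminus A|=n(R_{A,B})\ge 2$. Hence $Y$ is reducible, which is outcome (ii).

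The main obstacle is the boundary bookkeeping: showing $\bd_GY=C\cap D$, and handling the case $k=|X_G|\le 4$, where 4-lightness gives nothing. That case is what the maximal choice of $D$, together with the hypothesis $\rhoFour(G)\le 0$, is meant to cover.
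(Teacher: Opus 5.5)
Your proposal is essentially the paper's proof. The linked case gives (i) via Observation~\ref{obs:linkminor}. Otherwise you take an isolator, and your maximal choice of $D$ only serves to force $(X_G,V(G))$ when $k=|X_G|$, which is exactly the paper's choice. You then verify that $Y=D\setminus C$ is reducible using Observations~\ref{obs:linkminor} and~\ref{obs:vatodart}, as the paper does.

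The point you left open is $\bd_GY=C\cap D$, which the paper simply asserts as $R_Y=R_{C,D}$. It does not follow from the root paths. It follows from the rooted vampire/\Ktwofive{} minor of $R_{A,B}$: each root of that minor is adjacent to $p$ or $q$, so every vertex of $A\cap B$ has a neighbour in $B\setminus A\subseteq D\setminus C$. Hence a vertex $w\in C\cap D$ with no neighbour in $D\setminus C$ would lie outside $B$. Then $(C,D\setminus\{w\})$ would be a root separation of smaller order, contradicting the minimality of the isolator; this also covers $(X_G,V(G))$.
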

\begin{proof}
Let $W$ be a vampire or \Ktwofive{} which appears in $R_{A,B}$ as a rooted minor.
If the root separation $(A,B)$ is linked to the roots (and in particular $|X_G|=|A\cap B|=5$),
then Observation~\ref{obs:linkminor} implies that $W$ is also a rooted minor of $G$ and the outcome (i) holds.

Therefore, we can assume that this is not the case, and thus the root connectivity $k$ of $(A,B)$ is at most four.
We also have $k\le |X_G|$.  If $k=|X_G|$, then let $(C,D)=(X_G,V(G))$ (which is an isolator of $(A,B)$),
otherwise let $(C,D)$ be an arbitrary isolator of $(A,B)$.  Note that $\rhoFour(R_{C,D})\le 0$:
If $k<|X_G|$, this follows from the assumption that $G$ is $4$-light, and if $k=|X_G|$,
then $R_{C,D}=G$ and $\rhoFour(G)\le 0$ by the assumptions.

Let $A'=A\cap D$ and note that $(A',B)$ is a root separation of $R_{C,D}$ of root connectivity $k=|C\cap D|$.
Let $Z$ be the set of terminators of a root linkage of the root separation $(A',B)$ in $R_{C,D}$.
By Observation~\ref{obs:vatodart}, $\roots{G[B]}{Z}$ contains as a rooted minor $K_{|Z|}$ or a dart;
and by Observation~\ref{obs:linkminor}, so does $R_{C,D}$.
Let $Y=D\setminus C$.  Note that $R_Y=R_{C,D}$, and thus $\rhoFour(G,Y)\le 0$.
Moreover, $|Y|\ge |B\setminus A|\ge 2$, since $W$ is a rooted minor of $R_{A,B}$.
It follows that the $(\le\!4)$-fragment $Y$ is reducible, and the outcome (ii) holds.
\end{proof}

The proof of Theorem~\ref{thm:VH} proceeds by contradiction.
A \emph{\vampco{}} is a $4$-light $5$-rooted graph which is quite heavy,
but contains neither a vampire nor \Ktwofive{} as a rooted minor.
A \vampco{} $G$ is \emph{minimal} if every \vampco{} either has more than $|V(G)|$ vertices,
or exactly $|V(G)|$ vertices and at least $|E(G)|$ edges.
Note that a minimal \vampco{} $G$ satisfies $E(G[X_G])=\emptyset$, since edges between roots affect neither the assumptions
nor the outcome of Theorem~\ref{thm:VH}.
As the first step in the argument, we analyze small root separations.

\begin{lemma}\label{lem:vh-no-reducible}
A minimal \vampco{} has no reducible $(\le\!4)$-fragment.
\end{lemma}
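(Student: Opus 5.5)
Suppose for a contradiction that a minimal \vampco{} $G$ has a reducible $(\le\!4)$-fragment. Then it also has an inclusionwise-maximal one; call it $Y$, and let $H$ be the $Y$-reducent of $G$. The plan is to show that $H$ is again a \vampco{} and is smaller than $G$. That contradicts minimality.

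\textbf{Step 1: $H$ inherits the hypotheses.} $G$ is $5$-rooted and $4$-light, hence $4$-bilight, so Lemma~\ref{lemma:preserve-bilight} applies. It gives that $H$ is $4$-bilight, equivalently $4$-light as a $5$-rooted graph, and that $\rhoFour(H)\ge\rhoFour(G)$.

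\textbf{Step 2: $H$ is smaller.} In case (a) the reducent has $|V(G)|-|Y|$ vertices. In case (b) it has $|V(G)|-|Y|+1$ vertices, and $|Y|\ge 2$. Either way $|V(H)|<|V(G)|$.

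\textbf{Step 3: $H$ has no forbidden minor.} $H$ is an $\id$-rooted minor of $G$ (Observation~\ref{obs:seprepl}). So any rooted vampire or \Ktwofive{} minor of $H$ would also be one of $G$. Hence $H$ contains neither.

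\textbf{Step 4: $H$ is quite heavy.} This is the step I expect to be the main obstacle. If $\rhoFour(G)\ge 2$, then $\rhoFour(H)\ge 2$ and we are done. So assume $\rhoFour(G)=1$ and no non-root vertex of $G$ is adjacent to all five roots. Then $\rhoFour(H)\ge 1$, and if $\rhoFour(H)\ge 2$ we are done. Otherwise we must show that no non-root vertex $w$ of $H$ is adjacent to all five roots; suppose such a $w$ exists.

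\emph{The new root edges are at $w$.} Since $G$ has no such vertex, at least one edge from $w$ to a root lies in $E(H)\setminus E(G)$. Such an edge comes from $D$, so $w\in V(D)$.

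\emph{Subcase: $w$ has degree at least five in $H$.} Then the "moreover" part of Lemma~\ref{lemma:preserve-bilight} gives $\rhoFour(H)>\rhoFour(G)$, that is, $\rhoFour(H)\ge 2$, a contradiction.

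\emph{Subcase: $w$ has degree at most four in $H$.} Then $w$ cannot be adjacent to all five roots, again a contradiction. This covers the dart vertex $y$, which has degree four. If instead $w\in\bd_GY\setminus X_G$ is a non-root vertex of the clique or dart, it is adjacent to all five roots, so its degree is at least five and the previous subcase applies.

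Thus $H$ is quite heavy in all cases.

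\textbf{Conclusion.} By Steps 1–4, $H$ is a \vampco{} with fewer vertices than $G$, contradicting the minimality of $G$. The only delicate point is the degree bookkeeping in Step 4. It goes through because the paper arranged the "moreover" clause of Lemma~\ref{lemma:preserve-bilight} exactly for this purpose, and because the dart's non-root vertex has degree four.
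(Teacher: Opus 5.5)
Your proof is correct and follows the paper's argument essentially verbatim. You take an inclusionwise-maximal reducible fragment, apply Lemma~\ref{lemma:preserve-bilight} to the reducent, and use its ``moreover'' clause to rule out a newly created vertex adjacent to all five roots when $\rhoFour(G)=1$; your degree-at-most-four subcase is vacuous, since adjacency to five roots already forces degree at least five, so it can simply be dropped.
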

\begin{proof}
Suppose for a contradiction that a minimal \vampco{} $G$ contains a reducible $(\le\!4)$-fragment.
Let $Y$ be an inclusionwise-maximal reducible $(\le\!4)$-fragment in $G$
and let $H$ be the $Y$-reducent of $G$.  Recall that in $5$-rooted graphs, $4$-bilightness coincides with $4$-lightness.
Hence, Lemma~\ref{lemma:preserve-bilight} implies that $H$ is $4$-light, $\rhoFour(H)\ge\rhoFour(G)$, and
if $E(H)\setminus E(G)$ contains an edge between a non-root vertex of degree at least five
and a root vertex, then $\rhoFour(H)\ge \rhoFour(G)+1$.

We claim that $H$ is quite heavy.  Since $G$ is quite heavy and $\rhoFour(H)\ge\rhoFour(G)$, this is the case
unless $\rhoFour(G)=1$ and there exists a non-root vertex $v$ adjacent to all five roots in $H$ (but not in $G$).
However, then $E(H)\setminus E(G)$ contains an edge between the vertex $v$ of degree at least $|X_H|=5$ and a root,
and thus we actually have $\rhoFour(H)\ge \rhoFour(G)+1\ge 2$; and $H$ is quite heavy in this case as well.

Since $|V(H)|<|V(G)|$, the 5-rooted graph $H$ is not a \vampco{}, and thus it contains a vampire or \Ktwofive{} as a rooted minor.
However, then $G$ contains such a rooted minor as well, since $H$ is an $\id$-rooted minor of $G$.  This is a contradiction, since $G$ is a \vampco{}.
\end{proof}

\begin{lemma}\label{lem:order-five}
A minimal \vampco{} has no proper root $5$-separation with quite heavy right-hand side.
\end{lemma}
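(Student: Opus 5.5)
Suppose for a contradiction that a minimal \vampco{} $G$ has a proper root $5$-separation $(A,B)$ whose right-hand side $R=R_{A,B}$ is quite heavy. The plan is to show that $R$ is itself a smaller $5$-rooted graph to which Theorem~\ref{thm:VH} applies by minimality, and then to transfer the resulting rooted minor back to $G$ via Corollary~\ref{cor:vatoredu}.

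First I check that $R$ is $4$-light. Every root $(\le\!4)$-separation $(C',D')$ of $R$ extends to the root separation $(A\cup C',D')$ of $G$. This has the same order and the same right-hand side, because $A\cap B\subseteq C'$. Since $G$ is $4$-light, we get $\rhoFour(R_{C',D'})\le 0$.

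Next I check that $|V(R)|<|V(G)|$. Since $(A,B)$ is proper, $A\setminus B\neq\emptyset$, and $V(R)=B$ misses these vertices. If $A\setminus B$ consisted only of roots, I would still have $|V(R)|<|V(G)|$ because $R$ has exactly five roots. So $R$ is strictly smaller than $G$, and by minimality it is not a \vampco{}. Being $4$-light and quite heavy, $R$ must therefore contain a vampire or \Ktwofive{} as a rooted minor.

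Now I apply Corollary~\ref{cor:vatoredu} to $G$ with $|X_G|=5$. This gives one of two outcomes:
\begin{itemize}
\item outcome (i): $G$ contains a vampire or \Ktwofive{} as a rooted minor, which contradicts $G$ being a \vampco{};
\item outcome (ii): $G$ has a reducible $(\le\!4)$-fragment, which contradicts Lemma~\ref{lem:vh-no-reducible}.
\end{itemize}
Both outcomes are impossible, which completes the argument.

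The main point requiring care is that the roots of $R$ are $A\cap B$ rather than $X_G$, so $R$ is a legitimate $5$-rooted instance and the minimality comparison is by vertex count. I also need properness to guarantee that $R$ is genuinely smaller. If $A\setminus B$ were empty we would have $R=G$, but properness excludes this. The harder work of linking $R$'s minor back to $G$ through an isolator when the separation is not linked to the roots is already packaged inside Corollary~\ref{cor:vatoredu}, so I expect no real obstacle beyond verifying these hypotheses.
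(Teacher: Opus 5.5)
Your proof is correct and follows the paper's argument: $R_{A,B}$ inherits $4$-lightness from $G$, is strictly smaller because the separation is proper, and so contains a vampire or \Ktwofive{} by minimality, after which Corollary~\ref{cor:vatoredu} together with Lemma~\ref{lem:vh-no-reducible} gives the contradiction. The aside about $A\setminus B$ consisting only of roots is unnecessary, since $|B|<|V(G)|$ follows directly from $A\setminus B\neq\emptyset$.
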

\begin{proof}
Suppose for a contradiction that a minimal \vampco{} $G$ has a proper root $5$-separation $(A,B)$ with a quite heavy right-hand side.
Since $G$ is $4$-light, the right-hand side $R_{A,B}$ of this root separation is also $4$-light.
Since the root separation $(A,B)$ is proper, we have $|B|<|V(G)|$, and thus $R_{A,B}$ is not a \vampco{};
hence, $R_{A,B}$ contains a vampire or \Ktwofive{} as a rooted minor.
We now obtain a contradiction by applying Corollary~\ref{cor:vatoredu}:  The first outcome is not
possible, since $G$ is a \vampco{}, and the second one is excluded by Lemma~\ref{lem:vh-no-reducible}.
\end{proof}

\noindent Next, let us investigate the neighborhood of roots.

\begin{lemma}\label{lem:components}
If $G$ is a minimal \vampco{}, then the graph $G-X_G$ is connected,
each non-root vertex is adjacent to at most three roots,
and each root has degree at least two.
\end{lemma}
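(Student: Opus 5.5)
The plan is to prove the three assertions in the order \emph{degree of roots}, \emph{connectivity of $G-X_G$}, \emph{roots seen by a non-root vertex}. Each is proved by contradiction, in one of two ways. Either we pass to a smaller $4$-light, quite heavy rooted minor of $G$, which by minimality contains a vampire or \Ktwofive{} that lifts back to $G$. Or we exhibit such a rooted minor of $G$ directly. Throughout, $G$ is a minimal \vampco{}, so $E(G[X_G])=\emptyset$.

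\emph{Roots have degree at least two.} If some root $x$ is isolated, then $V(G)\setminus X_G$ is a $(\le\!4)$-fragment. By Observation~\ref{obs:heavyfrag} its $4$-density $\rhoFour(G)$ is then at most $0$, contradicting quite heaviness. If $x$ has a unique neighbor $u$, we contract $e=xu$, keeping $x$ as a root. We have $t_G(e)=\deg^X_G u-1$ because $x$ has no other neighbors. I would prove the middle assertion (that $\deg^X_G u\le 3$) before this step and use it here, so that $t_G(e)\le 2$. Observation~\ref{obs:contrdens} then gives $\rhoFour(G/e)\ge \rhoFour(G)+1\ge 2$, so $G/e$ is quite heavy. $4$-lightness also transfers: a $(\le\!4)$-fragment $Y$ of $G/e$ is a fragment of $G$. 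The edges with an end in $Y$ are the same in both graphs, except that an edge $uy$ with $y\in Y$ becomes $xy$. Hence $\rhoFour(G,Y)=\rhoFour(G/e,Y)$, and $|\bd_GY|\le|\bd_{G/e}Y|$ (at worst $x$ is replaced by $u$). Minimality of $G$ then yields a vampire or \Ktwofive{} in $G/e$, and hence in $G$, which is a contradiction.

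\emph{$G-X_G$ is connected.} Let $C_1,\dots,C_m$ be the components of $G-X_G$, so that $\sum_i\rhoFour(G,C_i)=\rhoFour(G)$. Suppose some component $C$ has $\rhoFour(G,C)\le 0$. Then $G-C$ is $4$-light, since its root separations are root separations of $G$. It also satisfies $\rhoFour(G-C)\ge\rhoFour(G)$, and deleting vertices cannot create a vertex adjacent to all five roots. Hence $G-C$ is quite heavy and smaller, so it contains a vampire or \Ktwofive{}, which lifts to $G$. Therefore every component has positive $4$-density. By $4$-lightness each component then has $\bd_GC_i=X_G$. If $m\ge 2$, we contract $C_1$ and $C_2$ to single vertices $p$ and $q$ adjacent to all roots, which gives a \Ktwofive{}. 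So $m=1$.

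\emph{No non-root vertex $v$ sees four or five roots.} Consider the components $C$ of $G-X_G-v$. All of them are adjacent to $v$, since $G-X_G$ is connected.
\begin{itemize}
\item If $v$ sees all five roots and some $C$ sees at least four roots, then contracting $C$ to $q$ and taking $p=v$ gives a vampire.
\item Otherwise every $C$ is a $(\le\!4)$-fragment. Counting edges, $\rhoFour(G)\le \sum_C\rhoFour(G,C)+5-4\le 1$. Together with $v$ being adjacent to all five roots, this contradicts quite heaviness.
\item If $v$ sees exactly the four roots $X_G\setminus\{x\}$, then a component $C$ whose attachments include all five roots, or include four roots with some root $x'\ne x$ missing, again gives a vampire with $p=C$ and $q=v$.
\item The $(\le\!4)$-fragments contribute non-positively, and $v$ contributes $4-4=0$.
\end{itemize}
The main obstacle is therefore the components $C$ with $\bd_G C=(X_G\setminus\{x\})\cup\{v\}$. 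These are $5$-fragments, so Lemma~\ref{lem:order-five} says their right-hand sides are not quite heavy. Each thus has $\rhoFour\le 1$, with equality only if some vertex of $C$ is adjacent to $v$ and to all of $X_G\setminus\{x\}$. I would handle this in two steps. First, if two such components have density $1$, or one does and another component contributes $0$ while $x$ attaches somewhere, I would try to build a vampire by merging $v$ with one component into $p$ and using a vertex of another as $q$. Second, I would use that $x$ has degree at least two and must be reached through some component. Connectivity of $G-X_G$ and the location of $x$'s neighbors should then force the total density to be at most $0$ in all remaining cases, contradicting $\rhoFour(G)\ge 1$. I expect this bookkeeping to be the most delicate part of the proof.
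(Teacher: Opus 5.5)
The $d=5$ case is fine, and so are your first two parts. Contracting the pendant edge $xu$, or deleting a non-positive component, and then invoking minimality are valid substitutes for the paper's appeals to Lemma~\ref{lem:order-five}. The gap is in the decisive case of the middle assertion, where $v$ sees exactly $X_G\setminus\{x\}$. There you only give a plan (``I would try'', ``should then force''), not a proof.

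The plan also has two concrete defects. First, it relies on $\deg_G x\ge 2$, which you derive from $\deg^X_G u\le 3$, so the argument is circular. Second, its first step is partly wrong. Suppose two components $C,C'$ with $\bd_G C=\bd_G C'=(X_G\setminus\{x\})\cup\{v\}$ both have density $1$. Merging $v$ with one of them and contracting the other gives two adjacent branch sets that both miss the same root $x$. That is not a vampire, whose two non-root vertices must miss distinct roots.

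The missing idea is short, and needs neither density bookkeeping nor Lemma~\ref{lem:order-five}:
\begin{itemize}
\item In this case $\rhoFour(G)=\sum_C\rhoFour(G,C)\ge 1$, so some component $Q_1$ of $G-X_G-v$ has $\rhoFour(G,Q_1)>0$.
\item By $4$-lightness and the vampire configurations you already excluded, $\bd_G Q_1=(X_G\setminus\{x\})\cup\{v\}$.
\item Now use only $\deg_G x\ge 1$, which your isolated-root argument already gives, so the circularity disappears. A neighbor of $x$ is not a root, is not $v$, and is not in $Q_1$. Hence it lies in another component $Q_2$, whatever the density of $Q_2$.
\item Contract $Q_2\cup\{v\}$ to $p$; this set is connected since $v\in\bd_G Q_2$, and $p$ sees all five roots. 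Contract $Q_1$ to $q$, which is adjacent to $p$ through $v$ and to every root except $x$. This is a vampire.
\end{itemize}
This is how the paper closes the argument. It treats $\deg^X_G v\in\{4,5\}$ uniformly via $\sum_i\rhoFour(G,Q_i)\ge\rhoFour(G)+4-\deg^X_G v\ge 1$, then proves roots have degree at least two afterwards.
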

\begin{proof}
Let $C_1$, \ldots, $C_k$ be the vertex sets of the components of $G-X_G$,
where $\rhoFour(G, C_1)\ge\ldots\ge\rhoFour(G,C_k)$.
Since $G$ is $4$-light, for each $i\in\{1,\ldots,k\}$ such that $\rhoFour(G,C_i)>0$, we have $|\bd_GC_i|>4$,
and thus $\bd_GC_i=X_G$.  Hence, if $k\ge 2$ and $\rhoFour(G,C_2)>0$, then we can contract each of $C_1$ and $C_2$
to a single vertex and obtain \Ktwofive{} as a rooted minor of $G$.  This is not possible since $G$ is a \vampco{},
and thus $\rhoFour(G,C_i)\le 0$ for every $i\in\{2,\ldots,k\}$.
However, then
$$\rhoFour(G)=\sum_{i=1}^k \rhoFour(G,C_i)\le\rhoFour(G,C_1),$$
and thus the subgraph $R^G_{C_1}$ of $G$ is quite heavy.
By Lemma~\ref{lem:order-five}, the root $5$-separation $(V(G)\setminus C_1,X_G\cup C_1)$ of $G$ is not proper.
This implies that $V(G)\setminus C_1=X_G$, and thus $k=1$.  That is, the graph $G-X_G=G[C_1]$ is connected.
Moreover, note that since $\bd_GC_1=X_G$, each root of $G$ has degree at least one.

Let us now consider a non-root vertex $z$, and suppose that $z$ is adjacent to $d\ge 4$ roots.
Let $x\in X_G$ be a root such that all vertices of $X_G\setminus\{x\}$ are adjacent to $z$.
Let $Q_1$, \ldots, $Q_m$ be the vertex sets of the components of $G-(X_G\cup \{z\})$;
since the graph $G-X_G$ is connected, we have $z\in \bd_GQ_i$ for each $i\in\{1,\ldots,m\}$.
Moreover,
$$\rhoFour(G)=d-4+\sum_{i=1}^m\rhoFour(G,Q_i).$$
Note that $d\le 5$.  Moreover, since $G$ is quite heavy, if $\rhoFour(G)<2$, then $\rhoFour(G)=1$ and $d=4$ (since no vertex is adjacent to
all roots in this case).  Therefore, we have
$$\sum_{i=1}^m\rhoFour(G,Q_i)\ge \rhoFour(G)+4-d\ge 1.$$
Without loss of generality, we can assume that
$\rhoFour(G,Q_1)>0$.  Since $G$ is $4$-light, we have $|\bd_GQ_1|\ge 5$,
and thus $|X_G\cap \bd_GQ_1|\ge 4$.  If $z$ is adjacent to $x$ or if $x\in \bd_GQ_1$,
then $z$ together with a vertex obtained by contracting $Q_1$ would give a vampire as a rooted minor of $G$,
a contradiction.  Therefore, $xz\not\in E(G)$ and $\bd_GQ_1=\{z\}\cup (X_G\setminus\{x\})$.
Since each root vertex has degree at least one in $G$, we can assume that $m\ge 2$ and $x$ has a neighbor in $Q_2$.
But then we obtain a vampire as a rooted minor of $G$ by contracting each of $Q_1$ and $Q_2\cup\{z\}$ to a single vertex.
This is a contradiction, and thus $\deg^X_G z\le 3$ for every $z\in V(G)\setminus X_G$.

We have already argued that each root of $G$ has at least one neighbor.
Suppose that a root $y$ of $G$ has exactly one neighbor $v$.
Recall that $G[X_G]=\emptyset$, since $G$ is a minimal \vampco{}, and thus $v\in V(G)\setminus X_G$.
Consider the root $5$-separation $(C,D)=(X_G\cup\{v\},V(G)\setminus \{y\})$ of $G$.
Note that
$$\rhoFour(R_{C,D})=\rhoFour(G)+4-\deg^X_G v\ge \rhoFour(G)+1\ge 2,$$
and thus $R_{C,D}$ is quite heavy.
This contradicts Lemma~\ref{lem:order-five}.  Therefore, each root of $G$ has degree at least two.
\end{proof}

With this, we can prove that contracting and deleting edges of a minimal \vampco{}
leads to a decrease in density.

\begin{lemma}\label{lem:edge-light}
If $G$ is a minimal \vampco{}, then for every edge $e\in E(G)$, neither of the 5-rooted graphs $G-e$ and $G/e$ is quite heavy.
\end{lemma}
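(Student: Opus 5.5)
The plan is to argue by minimality: both $G-e$ and $G/e$ are smaller than $G$ in the order used to define minimality, and both are $\id$-rooted minors of $G$. So if either one is quite heavy and $4$-light, it is not a \vampco{}, hence contains a vampire or \Ktwofive{} as a rooted minor, and so would $G$, a contradiction. Since $E(G[X_G])=\emptyset$, every edge has a non-root end, and we write $e=uv$ with $v\notin X_G$. The work is to show that the minor in question stays $4$-light, or else to derive a contradiction directly.

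\textbf{Deletion.} The graph $G-e$ has the same vertex set and fewer edges, and every root $(\le\!4)$-separation of $G-e$ has a right-hand side whose $4$-density is at most that of the corresponding separation in $G$. The one subtlety is that a separation of $G-e$ need not be a separation of $G$ when $e$ crosses it. In that case we add the crossing end of $e$ to the separator. If the order becomes $5$, we get a root $5$-separation of $G$, and that case has to be handled separately, as below. So $G-e$ is $4$-light unless some root $(\le\!4)$-separation $(A,B)$ of $G-e$ with $u\in A\setminus B$, $v\in B\setminus A$ has $\rhoFour(R^{G-e}_{A,B})>0$. Then $(A,B\cup\{u\})$ is a root separation of $G$ of order at most $5$, and its right-hand side has $4$-density at least $\rhoFour(R^{G-e}_{A,B})+1\ge 2$, counting $e$ and $u$ becoming a root. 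If the order is at most $4$, this contradicts $4$-lightness of $G$. If it is $5$, the right-hand side is quite heavy, so Lemma~\ref{lem:order-five} forces the separation to be improper, i.e.\ $A\cup\{u\}\supseteq V(G)$ minus nothing on the $B$ side. This degenerate case has to be ruled out by hand using Lemma~\ref{lem:components}: $G-X_G$ is connected, and non-root vertices see at most three roots.

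\textbf{Contraction.} The graph $G/e$ has fewer vertices. If $u\notin X_G$, a root $(\le\!4)$-separation of $G/e$ lifts to one of $G$ of order at most one larger, by uncontracting the merged vertex. If $u\in X_G$, it lifts with the same order, since the merged vertex is a root. I will compare densities by Observation~\ref{obs:contrdens} applied locally, or directly: the right-hand side in $G$ has at most the same edges minus the merge. A violation in $G/e$ therefore yields either a heavy root $(\le\!4)$-separation in $G$, which is impossible, or a root $5$-separation of $G$ whose right-hand side has density at least one more, hence quite heavy. In the latter case Lemma~\ref{lem:order-five} again forces it to be improper, and in the improper case the right-hand side is essentially all of $G$ with the separator $X_G\setminus\{\cdot\}\cup\{u,v\}$. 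There Lemma~\ref{lem:components} is needed: $\deg^X v\le 3$ and every root has degree at least $2$, which controls the densities.

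\textbf{Main obstacle.} The hardest part will be the careful bookkeeping in these boundary cases, where the lifted separation has order exactly $5$ and is improper or nearly so. In particular, when $u$ is a root, contraction changes which vertices are roots' neighbours, and "quite heavy" (rather than just positive density) must be verified for the lifted side. I would first try to show that in every such case the lifted side has density at least $2$, so the "no vertex adjacent to all roots" clause is not needed.
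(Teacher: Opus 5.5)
\textbf{Verdict: the contraction case has a genuine gap.} Your deletion case follows the paper's argument, with one correction. The improper case means the left side is exactly $X_G$. So $u$ is a root whose only neighbour is $v$, and the relevant fact is the degree-at-least-two clause of Lemma~\ref{lem:components}, not connectivity of $G-X_G$.

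\textbf{Where the contraction case fails.} Let $(C,D)$ be a root $(\le\!4)$-separation of $H=G/e$ with $\rhoFour(R^H_{C,D})\ge 1$, and let $z$ be the merged vertex. Suppose $z\in D\setminus C$, which can happen whenever $u\notin X_G$. Uncontracting gives the separation $(C,D\cup\{v\})$ of $G$ of the same order. By Observation~\ref{obs:contrdens}, its right-hand side has $4$-density $\rhoFour(R^H_{C,D})+t_G(e)-3$. This can be $\le 0$: contracting an edge with few triangles raises density, so the lift can be light. Nothing available bounds $t_G(e)$ from below, since the bound $t_G(e)\ge 3$ of Lemma~\ref{lem:triangles} is itself derived from this lemma. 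So your claim that a violation in $G/e$ yields either a heavy root $(\le\!4)$-separation of $G$ or a heavier $5$-separation is false here, and density bookkeeping alone cannot close this case.

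\textbf{The missing idea: reducibility.}
\begin{enumerate}
\item Choose $(C,D)$ with $D$ inclusionwise minimal, so that $R^H_{C,D}$ is $4$-light.
\item Theorem~\ref{thm:D-le4} and Lemma~\ref{lem:dartexists} then give $K_{|C\cap D|}$ or a dart as a rooted minor of $R^H_{C,D}$, hence of $R^G_{C,D\cup\{v\}}$.
\item Since $G$ is $4$-light, $(D\cup\{v\})\setminus C$ has non-positive $4$-density and contains $u$ and $v$. It is therefore a reducible fragment, contradicting Lemma~\ref{lem:vh-no-reducible}.
\end{enumerate}

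\textbf{Two smaller errors.}
\begin{itemize}
\item For $u\in X_G$ you say the lift keeps the same order. This fails when $z\in C\cap D$, because $v$ may have neighbours on both sides and must then join the separator. That situation is the same as the case $u\notin X_G$, $z\in C\cap D$.
\item In that case the lift $(C\cup\{v\},D\cup\{v\})$ has $4$-density $\rhoFour(R^H_{C,D})+t$, where $t$ is the number of common neighbours of $u$ and $v$ in $D\setminus C$. This can equal $1$, so your plan to force density at least $2$ may fail. Argue instead that density $1$ forces $t=0$, so no non-root vertex sees all five roots and the side is quite heavy. This $5$-separation is automatically proper: its left side contains $X_G\cup\{v\}$, which has six vertices. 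So Lemma~\ref{lem:order-five} applies directly and no improper subcase arises.
\end{itemize}
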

\begin{proof}
Recall that $E(G[X_G])=\emptyset$, and thus the edge $e=uv$ has a non-root end, say $v$.
Consider a 5-rooted graph $H\in\{G-e,G/e\}$, and suppose for a contradiction that $H$ is quite heavy.
Since $H$ is a rooted minor of $G$, it contains neither a vampire nor \Ktwofive{} as a rooted minor.
By the minimality of $G$, the 5-rooted graph $H$ is not a counterexample,
and thus it is not 4-light.  Let $(C,D)$ be a root $(\le\!4)$-separation of $H$ such that $\rhoFour(R^H_{C,D})\ge 1$,
chosen so that $D$ is inclusionwise-minimal among all such root $(\le\!4)$-separations.

Let us first consider the case that $H=G-e$.  If $e\in E(G[C])$ or $e\in E(G[D])$, then $(C,D)$ would also be a $(\le\!4)$-separation
of $G$ and $\rhoFour(R^G_{C,D})\ge\rhoFour(R^H_{C,D})>0$, contradicting the 4-lightness of $G$.
Therefore, we can assume that $u\in C\setminus D$ and $v\in D\setminus C$.
Let $D_1=D\cup\{u\}$, so that $(C,D_1)$ is a root separation of $G$ of order $|C\cap D|+1\le 5$
and $R^G_{C,D_1}$ is obtained from $R^H_{C,D}$ by adding the root vertex $u$ and the edge $e$.
Hence, we have
$$\rhoFour(R^G_{C,D_1})=\rhoFour(R^H_{C,D})+1\ge 2.$$
Since $G$ is 4-light, it follows that the root separation $(C,D_1)$ of $G$ has order five and is quite heavy.
By Lemma~\ref{lem:order-five}, this root separation cannot be proper, and since $v\in D_1\setminus C$,
it follows that $C=X_G$.  But then $v$ is the only neighbor of the root $u$ in $G$, 
which contradicts Lemma~\ref{lem:components}.

Finally, let us consider the case that $H=G/e$.  By a slight abuse of notation, we use $u$ to refer to the vertex of $H$ resulting from the contraction of the edge $e$ (as well as to the vertex of $G$).
Note that $u\in D$, as otherwise $(C\cup \{v\},D)$ would be a root separation of $G$ with the right-hand side equal to $R^H_{C,D}$,
contradicting the 4-lightness of $G$.

Suppose next that $u\in D\setminus C$, and let $D_1=D\cup\{v\}$, so that $(C,D_1)$ is a root separation of $G$ of order $|C\cap D|$.
The inclusionwise-minimality of $D$ from the choice of the root separation $(C,D)$ implies that
the rooted graph $R^H_{C,D}$ is $4$-light.
By Theorem~\ref{thm:D-le4} and Lemma~\ref{lem:dartexists}, $R^H_{C,D}$ contains either $K_{|C\cap D|}$ or a dart as a rooted minor.
The rooted graph $R^H_{C,D}$ is a rooted minor of $R^G_{C,D_1}$ obtained by contracting the edge $e$, and thus
$R^G_{C,D_1}$ contains either $K_{|C\cap D|}$ or a dart as a rooted minor as well.  Moreover, $|D_1 \setminus C|\ge 2$, since $u,v\in D_1\setminus C$.
Since $G$ is 4-light, the $(\le\!4)$-fragment $D_1\setminus C$ of $G$ is reducible, contradicting Lemma~\ref{lem:vh-no-reducible}.

Therefore, we have $u\in C\cap D$.  Let $C_2=C\cup\{v\}$ and $D_2=D\cup\{v\}$,
so that $(C_2,D_2)$ is a root separation of $G$ of order $|C\cap D|+1\le 5$.
Clearly $C_2\neq X_G$, since $v\not\in X_G$.
Let $t$ be the number of common neighbors of $u$ and $v$ in $D_2\setminus C_2$, and observe that
$$\rhoFour(R^G_{C_2,D_2})=t+\rhoFour(R^H_{C,D})\ge t+1>0.$$
Since $G$ is $4$-light, the root separation $(C_2,D_2)$ has order five.
Moreover, if a non-root vertex of $R^G_{C_2,D_2}$ is adjacent to all its roots,
then $t\ge 1$ and $\rhoFour(R^G_{C_2,D_2})\ge 2$.  Therefore, the $5$-rooted graph $R^G_{C_2,D_2}$ is quite heavy.
This contradicts Lemma~\ref{lem:order-five}.
\end{proof}

This implies that minimal \vampco{}s are not too dense.
\begin{corollary}\label{cor:vhsparse}
Every minimal \vampco{} $G$ satisfies $\rhoFour(G)=1$.
\end{corollary}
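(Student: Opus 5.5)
Since $G$ is quite heavy, $\rhoFour(G)\ge 1$. So it suffices to rule out $\rhoFour(G)\ge 2$, and the plan is to do this with Lemma~\ref{lem:edge-light}.

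Suppose that $\rhoFour(G)\ge 2$. We want an edge $e$ with $G-e$ quite heavy. By Lemma~\ref{lem:components} the graph $G$ has non-root vertices, and each root has degree at least two, so $E(G)\neq\emptyset$. Recall that $E(G[X_G])=\emptyset$, so every edge has a non-root end. Deleting any edge $e$ lowers $\rho$ by exactly one, so $\rhoFour(G-e)=\rhoFour(G)-1\ge 1$.

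If $\rhoFour(G)\ge 3$, then $\rhoFour(G-e)\ge 2$ and $G-e$ is quite heavy for every $e$. This contradicts Lemma~\ref{lem:edge-light}.

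If $\rhoFour(G)=2$, then $\rhoFour(G-e)=1$. Here $G-e$ is quite heavy exactly when no non-root vertex of $G-e$ is adjacent to all five roots. Deletion does not create new root adjacencies, so it is enough that no non-root vertex of $G$ is adjacent to all five roots. Lemma~\ref{lem:components} gives this directly: every non-root vertex is adjacent to at most three roots. Hence $G-e$ is quite heavy for any edge $e$, again contradicting Lemma~\ref{lem:edge-light}.

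Therefore $\rhoFour(G)=1$. There is no real obstacle here; the only point needing care is the boundary case $\rhoFour(G)=2$, and Lemma~\ref{lem:components} settles it.
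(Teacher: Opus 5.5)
Your proof is correct and is essentially the paper's argument: assume $\rhoFour(G)\ge 2$, delete an arbitrary edge, use Lemma~\ref{lem:edge-light} to force $\rhoFour(G)=2$ together with a non-root vertex adjacent to all five roots, and rule that vertex out by Lemma~\ref{lem:components}. Your split into the cases $\rhoFour(G)\ge 3$ and $\rhoFour(G)=2$, and your explicit check that an edge exists, only spell out what the paper does in a single step.
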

\begin{proof}
Suppose for a contradiction that $\rhoFour(G)\ge 2$, and let $e$ be any edge of $G$.
Lemma~\ref{lem:edge-light} implies that the 5-rooted graph $G-e$ is not quite heavy.
Since $G$ is quite heavy and $\rhoFour(G-e)=\rhoFour(G)-1$, it follows that $\rhoFour(G)=2$, $\rhoFour(G-e)=1$,
and a vertex $u$ of $G-e$ is adjacent to all roots.   However, then $u$ is adjacent to all roots in $G$ as well,
and this contradicts Lemma~\ref{lem:components}.
\end{proof}

On the other hand, edges of a minimal \vampco{} must lie in many triangles.

\begin{lemma}\label{lem:triangles}
Let $G$ be a minimal \vampco{}, let $e$ be any edge of $G$, let $H=G/e$
and let $u$ be the vertex of $H$ obtained by contracting $e$.
Then $t_G(e)\ge 3$, and if $t_G(e)=3$, then $u\not\in X_H$
(or equivalently, $e$ is not incident with a root of $G$) and $u$ is adjacent in $H$ to all roots.
\end{lemma}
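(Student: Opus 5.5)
The plan is to combine Observation~\ref{obs:contrdens}, Corollary~\ref{cor:vhsparse} and Lemma~\ref{lem:edge-light}. Write $e=uv$ with $v\not\in X_G$; this is possible because $E(G[X_G])=\emptyset$. By Corollary~\ref{cor:vhsparse} we have $\rhoFour(G)=1$. Observation~\ref{obs:contrdens} then gives
\[\rhoFour(G/e)=\rhoFour(G)+3-t_G(e)=4-t_G(e).\]
By Lemma~\ref{lem:edge-light}, the 5-rooted graph $H=G/e$ is not quite heavy. Since quite heavy means $\rhoFour\ge 2$, or $\rhoFour=1$ with no non-root vertex adjacent to all five roots, this forces $\rhoFour(H)\le 1$. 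Hence $t_G(e)\ge 3$.

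Now suppose $t_G(e)=3$, so $\rhoFour(H)=1$. Since $H$ is not quite heavy, some non-root vertex $w$ of $H$ is adjacent to all five roots of $H$. If $w$ were not the contracted vertex $u$, then $w$ would be a non-root vertex of $G$ whose neighbourhood among the roots is the same in $G$ and $H$. This holds because contraction only changes adjacencies of the merged vertex. One small check is needed when $u$ is a root: then the roots of $H$ are the same vertices as those of $G$, so $w$ would still be adjacent to all five roots in $G$. Either way this contradicts Lemma~\ref{lem:components}, which says every non-root vertex of $G$ is adjacent to at most three roots. Therefore $w=u$, so $u\not\in X_H$, which means neither end of $e$ is a root of $G$. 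Moreover $u$ is adjacent in $H$ to all roots.

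I expect no real obstacle here. The only care needed is in the case bookkeeping showing the witness $w$ must be the contracted vertex, including when $u$ was originally a root. In that case the statement that ``$u$ is a non-root vertex'' already fails, so the witness must be some $w\neq u$, and the argument above rules that out.
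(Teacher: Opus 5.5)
Your route is the paper's: $\rhoFour(G/e)=4-t_G(e)$ from Observation~\ref{obs:contrdens} and Corollary~\ref{cor:vhsparse}, then non-quite-heaviness of $G/e$ from Lemma~\ref{lem:edge-light}, then Lemma~\ref{lem:components} to force the witness to be the contracted vertex. The bound $t_G(e)\ge 3$ is proved correctly, and so is the case $t_G(e)=3$ when $e$ has no root end.

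There is a false step in exactly the case you flagged as needing care. Suppose $u\in X_G$. Then the contracted vertex is the root $u$ of $H$. Any non-root vertex $w$ adjacent to $v$ but not to $u$ in $G$ becomes adjacent to the root $u$ in $H$. So $w$'s neighbourhood among the roots is \emph{not} the same in $G$ and $H$: contraction changes adjacencies only at the merged vertex, but here the merged vertex is a root. Hence you cannot conclude that $w$ is adjacent to all five roots in $G$.

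What you can conclude is that $w$ is adjacent in $G$ to the four roots in $X_G\setminus\{u\}$, that is, to all roots not incident with $e$. Lemma~\ref{lem:components} allows a non-root vertex at most three root neighbours, so this still gives the contradiction. The paper argues exactly this way (``all (at least four) roots not incident with $e$''). The repair is one line, but the justification as you wrote it is incorrect.
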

\begin{proof}
By Observation~\ref{obs:contrdens}, we have
$$\rhoFour(G/e)=\rhoFour(G)+3-t_G(e)=4-t_G(e).$$
Lemma~\ref{lem:edge-light} implies that the 5-rooted graph $G/e$ is not quite heavy,
and thus $t_G(e)\ge 3$.

Moreover, if $t_G(e)=3$, then $\rhoFour(G/e)=1$, and thus $G/e$ has a non-root vertex $z$ adjacent to
all roots.  If $z\neq u$, then $z$ would also be adjacent in $G$ to all (at least four) roots
not incident with $e$, contradicting Lemma~\ref{lem:components}.  Therefore, we have $z=u$.
\end{proof}

Let us improve this bound a bit in a special case.

\begin{lemma}\label{lem:three-edge}
Let $G$ be a minimal \vampco{} and let $e=uv$ be an edge of $G$ incident with a non-root vertex $v$.
If $\deg_G v+\deg^X v\leq8$, then $t_G(e)\geq4$.
\end{lemma}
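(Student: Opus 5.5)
Assume for a contradiction that $t_G(e)=3$ and apply Lemma~\ref{lem:triangles} to $e$. It gives that $u$ is not a root and that the vertex of $G/e$ obtained from $e$ is adjacent to all five roots, so $N^X(u)\cup N^X(v)=X_G$. By Lemma~\ref{lem:components}, each of $u$ and $v$ is adjacent to at most three roots. Hence $a:=\deg^X_G v\in\{2,3\}$ and $\deg^X_G u\ge 5-a$. The hypothesis then gives $\deg_G v\le 8-a$, so $\deg_G v\le 6$ if $a=2$ and $\deg_G v\le 5$ if $a=3$. Let $S$ be the set of the three common neighbours of $u$ and $v$; since both are non-roots, these are genuine common neighbours in $G$. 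The plan is to use the small neighbourhood of $v$, together with the fact that every edge at $v$ lies in at least three triangles in the augmented sense (Lemma~\ref{lem:triangles}), to produce a vampire or \Ktwofive{} as a rooted minor. That would contradict $G$ being a \vampco{}.

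First I will work in the graph $N$ induced on $N_G(v)$, with all roots made pairwise adjacent. For each neighbour $w$ of $v$, Lemma~\ref{lem:triangles} says $w$ has at least three neighbours in $N$, so $N$ has minimum degree at least three. For every non-root neighbour $w$ with $t_G(vw)=3$, the same lemma says $N^X(w)\cup N^X(v)=X_G$. With at most six vertices, of which two or three are roots forming a clique, this is a short finite case analysis. I would organise it by $a$ and by how many roots lie in $S$. If $S$ contains a root $x\in N^X(v)$, then the roots missed by $v$ must all be seen by $u$.

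In each case, the aim is to place $v$ in one bag $p$ and $u$ (possibly together with a vertex of $S$) in another bag $q$ so that $p$ and $q$ are adjacent. Here $p$ should reach all roots except possibly one root $x_2$, and $q$ all roots except possibly a different root $x_1$, which is a vampire. To gain roots, the bag of $v$ is enlarged by contracting suitable non-root neighbours $w\ne u$. A neighbour $w$ with $t_G(vw)=3$ is particularly useful: it is forced to cover $X_G\setminus N^X(v)$, so merging it into $v$'s bag gives a bag adjacent to all roots. If two disjoint such bags arise, we get \Ktwofive{} instead.

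The step I expect to be the main obstacle is the subcase $a=2$ with $\deg_G v=6$. There, $v$ has three non-root neighbours in $S$ and possibly one more, and all of them may have $t_G(vw)\ge 4$, so Lemma~\ref{lem:triangles} gives no root information about them. My first attempt there is to count edges of $N$: minimum degree three on six vertices forces $S$ to be densely joined to $u$'s side. I would then use $G-X_G$ connected (Lemma~\ref{lem:components}) and the fact that each root has degree at least two to route a path from $N_G(v)\setminus\{u\}$ to the missing root avoiding $u$'s bag. If this fails, the fallback is to exhibit a proper root $5$-separation around $\{u,v\}\cup S$ with quite heavy right-hand side, contradicting Lemma~\ref{lem:order-five}, or a reducible $(\le\!4)$-fragment, contradicting Lemma~\ref{lem:vh-no-reducible}.
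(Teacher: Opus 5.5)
Your proposal has a genuine gap. The decisive step is left as an unspecified ``short finite case analysis'' near $v$ plus a vague fallback, and no search confined to the neighbourhoods of $u$ and $v$ can work.

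Your setup already pins the situation down:
\begin{itemize}
\item If $a=3$, then $u$ sees at most one root of $N^X(v)$, and $v$ has at most one non-root neighbour other than $u$. So $|S|\le 2$, which is impossible.
\item If $a=2$, then $N^X(u)=X_G\setminus N^X(v)$. So $S$ contains no roots, and $N_G(v)=N^X(v)\cup\{u\}\cup S$ exactly; there is no ``possibly one more''.
\item Lemma~\ref{lem:triangles} forbids $t_G=3$ on edges incident with roots. Hence both roots in $N^X(v)$ are adjacent to all of $S$. Then each $s\in S$ sees at most one root of $N^X(u)$, so $t_G(vs)\ge 4$. In particular, your ``useful'' neighbours $w\ne u$ with $t_G(vw)=3$ do not exist.
\end{itemize}
Now let $G[S]$ be a triangle and let no vertex of $S$ be adjacent to a root of $N^X(u)$. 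Every constraint from Lemmas~\ref{lem:triangles} and~\ref{lem:components} on the edges at $v$ is satisfied. Yet $G[X_G\cup\{u,v\}\cup S]$ has no vampire or \Ktwofive{} as a rooted minor, because there the three roots of $N^X(u)$ have $u$ as their only neighbour. Whether $G$ has one depends on how the rest of the graph attaches. Connectivity of $G-X_G$ and root degrees at least two do not control that.

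The missing ingredient is global: Corollary~\ref{cor:vhsparse} ($\rhoFour(G)=1$), applied to the components $Q_1,\ldots,Q_m$ of $G-(X_G\cup\{u,v\})$.
\begin{itemize}
\item Each $Q_i$ has a neighbour in $\{u,v\}$. So it sees at most three roots, since otherwise contracting $Q_i$ and $uv$ gives a vampire.
\item Hence a component with $\rhoFour(G,Q_i)>0$ has boundary exactly $\{u,v\}$ plus three roots. By Lemma~\ref{lem:order-five} it has density exactly $1$ and contains a vertex $q_i$ adjacent to its whole boundary.
\item The identity $1=\rhoFour(G)=\deg^X_G u+\deg^X_G v-7+\sum_i\rhoFour(G,Q_i)$ forces at least $5-\min(\deg^X_G u,\deg^X_G v)$ dense components. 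Each contains a neighbour of $v$, so $\deg_G v\ge 6$.
\item The hypothesis then gives $\deg^X_G v=2$, $\deg_G v=6$, and $N_G(v)=N^X(v)\cup\{u,q_1,q_2,q_3\}$, with the $q_i$ in distinct components.
\item Therefore $t_G(vq_i)\le 3$, so $t_G(vq_i)=3$. Lemma~\ref{lem:triangles} then makes $q_i$ adjacent to all five roots, a contradiction.
\end{itemize}
In the configuration above, $S$ lies inside a single component. It is exactly this density count that rules it out, not a local minor search or a separation ``around $\{u,v\}\cup S$''.
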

\begin{proof}
Suppose for a contradiction that $t_G(e)\le 3$.  By a slight abuse of notation, let us also use $u$ to denote the vertex of $H=G/e$ obtained by contracting the edge
$e$.  By Lemma~\ref{lem:triangles}, we have $t_G(e)=3$, $u\not\in X_G$, and $u$ is adjacent in $H$ to all roots.
Equivalently, each vertex in $X_G$ has a neighbor in $\{u,v\}$.
Recall that Lemma~\ref{lem:components} gives $\deg^X_G u,\deg^X_G v\le 3$, and thus $\max(\deg^X_G u,\deg^X_G v)=3$
and $\min(\deg^X_G u,\deg^X_G v)\ge 2$.

Let $Q_1$, \ldots, $Q_m$ be the vertex sets of the components of $G-(X_G\cup \{u,v\})$.
Note that since $G-X_G$ is connected by Lemma~\ref{lem:components}, for each $i\in \{1,\ldots,m\}$, there is an edge of $G$ between $Q_i$ and $\{u,v\}$.
If there existed $i\in \{1,\ldots,m\}$ such that at least four vertices of $X_G$ have a neighbor in $Q_i$, then
we could obtain a vampire as a rooted minor of $G$ by contracting $Q_i$ to a single vertex and by contracting the edge $uv$.
Hence, this is not the case, and in particular $|\bd_G Q_i|\le 5$ for every $i\in\{1,\ldots, m\}$.

Let $I\subseteq\{1,\ldots,m\}$ consist of the indices $i\in\{1,\ldots, m\}$ such that $\rhoFour(G,Q_i)>0$.
Since $G$ is $4$-light, for each $i\in I$ we have $|\bd_G Q_i|=5$, and thus $u,v\in \bd_G Q_i$.
Hence, $$\deg_G v\ge\deg^X_G v+|I|+1.$$
Moreover, Lemma~\ref{lem:order-five} implies that for each $i\in I$ we have $\rhoFour(G,Q_i)=1$ and there exists a vertex $q_i\in Q_i$
adjacent to all vertices of $\bd_GQ_i$.
Note that
\begin{align}
1&=\rhoFour(G)=\deg^X u+\deg^X v-7+\sum_{i=1}^m \rhoFour(G,Q_i)\nonumber\\
&\le \deg^X u+\deg^X v-7+|I|=\min(\deg^X_G u,\deg^X_G v)-4+|I|\nonumber\\
&\le \deg^X_G v+|I|-4\le \deg_G v-5,\label{eq:vdeg}
\end{align}
and thus $\deg_G v\ge 6$.  Recall that $\min(\deg^X_G u,\deg^X_G v)\ge 2$, and
since $\deg_G v+\deg^X v\leq8$, it follows that $\deg^X_G v=2$ and $\deg_G v=6$.
Thus, the inequalities in \eqref{eq:vdeg} all hold with equality, and in particular $|I|=3$.
Therefore, the vertex $v$ is adjacent to two vertices of $X_G$, the vertex $u$, the vertex $q_i$ for each $i\in I$,
and no other vertices.  Let $W$ be the set of the two neighbors of $v$ in $X_G$.

Consider any $i\in I$.  Note that the only common neighbors of $v$ and $q_i$ are $u$ and possibly the vertices of $W$,
and thus $t_G(vq_i)\le 3$.  By Lemma~\ref{lem:triangles}, we have $t_G(vq_i)\ge 3$, and thus
$q_i$ is adjacent to both vertices in $W$.  Moreover, Lemma~\ref{lem:triangles} implies that since $t_G(vq_i)=3$,
each root has a neighbor in $\{v,q_i\}$, and thus also the vertices in $X_G\setminus W$ are adjacent to $q_i$.
However, this implies that $\{u,v\}\cup X_G\subseteq \bd_GQ_i$, which is a contradiction.
\end{proof}

\noindent Next we prove a technical lemma on small graphs with large minimum degree.
\begin{lemma}\label{lem:vampnbhd}
Let $H$ be a graph with $n$ vertices and let $X$ be a proper subset of $V(H)$.
Suppose that $n+|X|\le 8$, each vertex $v\in V(H)\setminus X$
has degree at least four, and each vertex $v\in X$ has degree at least $5-|X|$.
For every set $Z\subseteq V(H)$ of size five such that $X\subset Z$, there exists
a vertex $z\in Z\setminus X$ such that $\roots{H}{Z}$ has an $\id$-rooted minor containing
at least three edges between $z$ and $Z\setminus\{z\}$.
\end{lemma}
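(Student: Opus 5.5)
Throughout, let $W=V(H)\setminus Z$. Since $|Z|=5$ and $n+|X|\le 8$, we have $|W|=n-5\le 3-|X|$, and in particular $|X|\le 3$. Also $W\cap X=\emptyset$, so every vertex of $W$ has degree at least four. All minors we take are obtained by contracting each component of $H[W]$ (or parts of it) into an adjacent vertex of $Z$, possibly after deleting some vertices of $W$. This keeps $Z$ as the root set and gives an $\id$-rooted minor of $\roots{H}{Z}$. For $z\in Z\setminus X$, I will write $N_Z(z)$ for the set of neighbours of $z$ in $Z$; I will call $z$ \emph{good} if some such minor gives $z$ at least three neighbours in $Z\setminus\{z\}$. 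The degree hypothesis on $X$ will not be needed; only the bounds on $W$ and on $\deg z$ for $z\notin X$ matter.

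\textbf{Trivial cases.} Any $z\in Z\setminus X$ has $\deg_H z\ge 4$ and at most $|W|$ neighbours outside $Z$, so $|N_Z(z)|\ge 4-|W|$.
\begin{itemize}
\item If $|W|\le 1$, this already gives three neighbours in $Z$, with no contraction. This covers $|X|\ge 2$, and $|X|\le 1$ with $n\le 6$.
\item If some $z\in Z\setminus X$ has no neighbour in $W$, then $|N_Z(z)|\ge 4$ and $z$ is good.
\end{itemize}
So from now on $|W|\in\{2,3\}$, $|X|\le 3-|W|\le 1$, and every vertex of $Z\setminus X$ has a neighbour in $W$.

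\textbf{Main step.} Let $C$ be a component of $H[W]$, and let $N_Z(C)$ be the set of vertices of $Z$ with a neighbour in $C$. Every vertex of $C$ has at least $4-(|C|-1)$ neighbours in $Z$. Hence $|N_Z(C)|\ge 4$ when $|C|=1$, and $|N_Z(C)|\ge 3$ when $|C|=2$.
\begin{itemize}
\item If $|N_Z(C)|\ge 4$, then $N_Z(C)$ meets $Z\setminus X$ in at least $4-|X|\ge 3$ vertices. Contracting $C$ into one of them, say $z$, gives $z$ at least three neighbours in $Z\setminus\{z\}$.
\item Otherwise $|N_Z(C)|=3$ and $|C|\in\{2,3\}$. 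Pick $z\in Z\setminus X$ adjacent to $C$. Contracting $C$ into $z$ gives $z$ the neighbours $N_Z(z)\cup (N_Z(C)\setminus\{z\})$. If $z\notin N_Z(C)$ this set already has at least three vertices. If $z\in N_Z(C)$, we get the two other vertices of $N_Z(C)$, and we need one more vertex of $N_Z(z)$ outside $N_Z(C)$, or one more vertex contributed by another component of $H[W]$.
\end{itemize}
As a second tool, instead of contracting $C$ into $z$ we can contract $C$ into some $z'\in N_Z(C)\setminus\{z\}$. Then $z$ gains $z'$ as a neighbour, and the remaining components of $H[W]$ stay available to be contracted into $z$.

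\textbf{Main obstacle.} The hard configurations are $|W|=2$ with $|X|\le 1$, and $|W|=3$ with $X=\emptyset$ and $n=8$. There $H[W]$ is an edge, a $2$-path or a triangle, every component $C$ has $|N_Z(C)|=3$, and every vertex of $Z\setminus X$ is adjacent to $W$.

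In these cases I would count edges. The five vertices of $Z$ (at least four of them of degree at least four) send few edges into $W$, because each $N_Z(C)$ has size $3$. Hence some $z\in Z\setminus X$ has at least three neighbours in $Z$, or some $z\in Z\setminus X$ has a neighbour in $Z\setminus N_Z(C)$.

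If two components exist, or $|N_Z(C)|=3$ with $z$ outside it, the contraction above works. If $H[W]$ is connected with $N_Z(W)=T$, $|T|=3$, then the two vertices of $Z\setminus T$ have all their neighbours inside $Z$. Since $|X|\le 1$, at least one of them is not in $X$, so it has degree at least $4$ in $Z$. This contradicts the earlier reduction that every vertex of $Z\setminus X$ has a neighbour in $W$.

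I expect this final pigeonhole, carried out separately for each shape of $H[W]$, to be the only fiddly part of the proof.
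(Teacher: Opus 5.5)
Your argument is correct and is essentially the paper's. You reduce to the case where every vertex of $Z\setminus X$ has a neighbour in $W$ (so $|W|\in\{2,3\}$ and $|X|\le 1$); then either you contract a component of $H[W]$ that sees at least four vertices of $Z$ into one of them, or, when $H[W]$ is connected, you use that it sees all of $Z\setminus X$ (your final contradiction), which is the paper's contraction of $H[U\cup\{z\}]$. The ``main obstacle'' is in fact empty, so you can delete the edge-counting paragraph (whose conclusion you never justify), the second bullet and the ``second tool'': in the reduced situation a disconnected $H[W]$ has a singleton component, which sees at least four vertices of $Z$, while a connected $H[W]$ has $N_Z(W)\supseteq Z\setminus X$ of size at least four, so your first bullet always applies. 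This also repairs your unjustified claim $|N_Z(C)|=3$ for $|C|=3$, where the degree bound only gives $|N_Z(C)|\ge 2$.
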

\begin{proof}
Note that $n\ge 5$, since vertices in $V(H)\setminus X$ have degree at least four,
and thus $n+|X|\le 8$ implies $|X|\le 3$.  Let $U=V(H)\setminus Z$.  We can assume that each vertex $v\in Z\setminus X$
has at most two neighbors in $Z$, as otherwise we can simply choose $z=v$.
It follows that at least two neighbors of $v$ are in $U$,
and thus $|U|\ge 2$, $n=|Z|+|U|\ge 7$, and $|X|\le 8-n\le 1$.  It follows that $H$ has minimum degree at least
$\min(4,5-|X|)=4$.

If a vertex $u\in U$ has at least four neighbors in $Z$, then we obtain the desired $\id$-rooted minor
by contracting the edge $uz$ for any neighbor $z$ of $u$ in $Z\setminus X$.  Therefore, we can assume that this
is not the case, and thus each vertex in $U$ has at most three neighbors in $Z$
and at least one neighbor in $U$.  Since $|U|=n-5\le 3$, it follows
that the subgraph $H[U]$ is connected.

Recall that all (at least four) vertices in $Z\setminus X$ have a neighbor (actually at least two neighbors) in $U$.
Hence, we can choose any vertex $z\in Z\setminus X$ and obtain the desired $\id$-rooted minor by
contracting the connected subgraph $H[U\cup\{z\}]$.
\end{proof}

\noindent We are now ready to finish the proof.

\begin{proof}[Proof of Theorem~\ref{thm:VH}]
Suppose for a contradiction that Theorem~\ref{thm:VH} is false, i.e., there exists a \vampco{}, and let $G$ be a minimal one.
By Corollary~\ref{cor:vhsparse}, we have $\rhoFour(G)=1$, and thus Observation~\ref{obs:4sum}
gives
\begin{equation}\label{eq:degsum}
\sum_{v\in V(G)\setminus X_G} (\deg_G v+\deg^X_G v-8)=2.
\end{equation}
Note that for every $v\in V(G)\setminus X_G$, we have $\deg^X_G v\le 3$ by Lemma~\ref{lem:components}.
Moreover $\deg_G v\le \deg^X_G v+n(G)-1$, and thus
$$\deg_G v+\deg^X_G v-8\le 2\deg^X_G v+n(G)-9\le n(G)-3.$$
It follows that $n(G)\cdot (n(G)-3)\ge 2$, and thus $n(G)\ge 4$.

By~\eqref{eq:degsum}, there exists a vertex $v\in V(G)\setminus X_G$ such that 
$\deg_G v+\deg^X_G v\le 8$.  Let $H$ be the subgraph of $G$ induced by the neighbors of $v$
and let $X=X_G\cap V(H)$.  Note that $X\neq V(H)$, since the graph $G-X_G$ is connected by Lemma~\ref{lem:components}
and $n(G)>1$.  Observe that for each vertex $u\in V(H)\setminus X$, we have $\deg_H u=t_G(uv)$,
and for each $u\in X$, we have $\deg_H u=t_G(uv)-(|X|-1)$.
Lemma~\ref{lem:three-edge} implies that $\deg_H u\ge 4$ for each $u\in V(H)\setminus X$ and $\deg_H u\ge 5-|X|$
for each $u\in X$.  In particular, we have $|V(H)|\ge 5$.

Consider the root separation $(A,B)=(V(G)\setminus\{v\},V(H)\cup\{v\})$ of $G$.
Let $(C,D)$ be an isolator of $(A,B)$, where $(C,D)$ is chosen as $(X_G,V(G))$ when the root connectivity of $(A,B)$ is five.
Let $Y$ be the set of terminators of a root linkage $\calL$ of the root separation $(A\cap D, B)$ of $R_{C,D}$. Then $X \subseteq Y$.
Let $Z\subseteq V(H)$ be any superset of $Y$ of size five.
By Lemma~\ref{lem:vampnbhd}, there exists a vertex $z\in Z\setminus X$ and a graph $F_0$ with vertex set $Z$ such that
$\deg_{F_0} z\ge 3$ and $\roots{H}{Z}$ contains $F_0$ as an $\id$-rooted minor.  Let $F$ be the graph obtained from $F_0$ by adding
a vertex $v'$ adjacent to all vertices of $Z$; then clearly $\roots{G[B]}{Z}$ contains $F$ as an $\id$-rooted minor.
Note that $z$ has at most one non-neighbor in $F$.

If the root connectivity of $(A,B)$ is five, then recall that $(C,D)=(X_G,V(G))$, and thus
$z\not\in C$.  In the rooted graph $G$, let us contract $\roots{G[B]}{Z}$ to $F$, then contract each path of the linkage $\calL$ to a single vertex,
except for the path that ends in $z$ which is contracted to an edge instead.  In this way, we obtain a vampire
as a rooted minor of $G$, which is a contradiction.

Therefore, the root connectivity of $(A,B)$ (which is equal to $|C\cap D|$ and $|Y|$) is at most four.
Let $F'$ be the $\id$-rooted minor of $\roots{G[B]}{Y}$ obtained as follows.  We first contract
$\roots{G[B]}{Z}$ to $F$.  If $z\in Y$, we let $F'=F-(Z \setminus Y)$ and $z'=z$.
If $z\not\in Y$ and $|Y|\ge 3$, then let $z'$ be a neighbor of $z$ in $Y$ and let $F'$ be the graph obtained
from $F-(Z\setminus (Y \cup \{z\}))$ by contracting the edge $zz'$.  Finally, if $z\not\in Y$ and $|Y|\le 2$,
then let $F'=F-(Z\setminus Y)$ and choose $z'\in Y$ arbitrarily.  In all cases, note that
$F'$ is a graph with vertex set $Y\cup \{v'\}$, where $v'$ is a universal vertex and $z'$ has at most one non-neighbor in $F'$.
Let $w'$ be this non-neighbor (or an arbitrary vertex in $Y\setminus \{z'\}$ if $|Y|\ge 2$ and $z'$ is adjacent to all
vertices of $Y\setminus \{z'\}$, or $w'=z'$ if $|Y|=1$).

In the rooted graph $R^G_{C,D}$ we contract $\roots{G[B]}{Y}$ to $F'$,
in the case that $|C\cap D|=|Y|\le 3$ we additionally contract the edge $v'w'$,
and then we contract the paths of the root linkage $\calL$ to single vertices.
If $|C\cap D|\le 3$, then this results in $K_{|C\cap D|}$ as a rooted minor of $R^G_{C,D}$.
If $|C\cap D|=4$, then this instead results in a dart as a rooted minor of $R^G_{C,D}$.
Moreover, note that $|D\setminus C|\ge |V(H)\cup\{v\}|-|C\cap D|\ge 2$.
Since $G$ is 4-light, it follows that the fragment $D\setminus C$ of $G$ is reducible.
However, this contradicts Lemma~\ref{lem:vh-no-reducible}.

This contradiction shows that there exists no \vampco{}.
\end{proof}

\section{Restricting the separations}
\label{sec:five}

We are now ready to start our work on the proof of Theorem~\ref{thm:density}.
The overall structure of the argument is similar to the proof of Theorem~\ref{thm:VH},
with a few adjustments for the unrooted setting.  A \emph{\denco{}} is a $4$-bilight $\target$-minor-free graph $G$
with $n\ge 3$ vertices and at least $4n-7$ edges.  By our definition of $\rhoFour$ for unrooted graphs, we equivalently have
$\rhoFour(G)\ge -7$.  A \denco{} $G$ is \emph{minimal} if every \denco{} either has more than $|V(G)|$ vertices,
or exactly $|V(G)|$ vertices and at least $|E(G)|$ edges.
We start the argument by constraining separations of small order.
\begin{lemma}\label{lem:den-no-reducible}
A minimal \denco{} has no reducible $(\le\!4)$-fragment.
\end{lemma}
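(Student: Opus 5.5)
We argue exactly as in the proof of Lemma~\ref{lem:vh-no-reducible}, adapted to the unrooted setting. Suppose for a contradiction that a minimal \denco{} $G$ has a reducible $(\le\!4)$-fragment. Choose an inclusionwise-maximal reducible $(\le\!4)$-fragment $Y$ of $G$, and let $H$ be the $Y$-reducent of $G$. We will show that $H$ is a smaller \denco{}, contradicting the minimality of $G$.

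By Lemma~\ref{lemma:preserve-bilight}, applied to the unrooted $4$-bilight graph $G$, the graph $H$ is $4$-bilight and $\rhoFour(H)\ge\rhoFour(G)\ge -7$.

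Next we check that $H$ has at least three vertices and fewer vertices than $G$. For the first claim, condition (ii) of reducibility gives $|V(G)\setminus Y|\ge 3$, and $V(G)\setminus Y\subseteq V(H)$, so $|V(H)|\ge 3$. For the second claim, we use condition (iii).
\begin{itemize}
\item In case (a), $V(H)=V(G)\setminus Y$, so $|V(H)|=|V(G)|-|Y|<|V(G)|$.
\item In case (b), $|V(H)|=|V(G)|-|Y|+1$, and $|Y|\ge 2$ by assumption, so again $|V(H)|<|V(G)|$.
\end{itemize}

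Finally, $H$ is a minor of $G$, by Observation~\ref{obs:seprepl} applied to the separation $(V(G)\setminus Y, Y\cup\bd_GY)$, whose right-hand side contains $D$ as an $\id$-rooted minor. Since $G$ is $\target$-minor-free, so is $H$.

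Thus $H$ is a $4$-bilight $\target$-minor-free graph with $|V(H)|\ge 3$ vertices and at least $4|V(H)|-7$ edges, i.e., a \denco{} with fewer vertices than $G$. This contradicts the minimality of $G$.

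The only step that needs any care is the vertex count. It relies on condition (ii) of reducibility together with the requirement $|Y|\ge 2$ in the dart case; both are already built into the definition of a reducible fragment, so we do not expect a genuine obstacle. The substantive work, namely that $4$-bilightness is preserved under the reduction, is entirely contained in Lemma~\ref{lemma:preserve-bilight}.
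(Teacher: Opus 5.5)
Your proof is correct and follows essentially the same argument as the paper. You take an inclusionwise-maximal reducible fragment, apply Lemma~\ref{lemma:preserve-bilight} to the reducent $H$, use condition (ii) to get $|V(H)|\ge 3$, and note that $H$ is a $\target$-minor-free minor of $G$. Your explicit check that $|V(H)|<|V(G)|$, using $|Y|\ge 2$ in the dart case, is a detail the paper leaves implicit.
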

\begin{proof}
Suppose for a contradiction that a minimal \denco{} $G$ contains a reducible $(\le\!4)$-fragment,
and let $Y$ be an inclusionwise-maximal one.  Let $H$ be the $Y$-reducent of $G$.  Note that since $Y$ is a reducible
fragment in an unrooted graph, we have $|V(H)|\ge |V(G)\setminus Y|\ge 3$. Since $H$ is a minor of $G$, it is $\target$-minor-free.
Moreover, Lemma~\ref{lemma:preserve-bilight} implies that $H$ is $4$-bilight and $\rhoFour(H)\ge\rhoFour(G)\ge -7$,
and thus $H$ is a \denco{}.  This contradicts the minimality of the \denco{} $G$.
\end{proof}

\noindent Let us note a technical observation.
\begin{observation}\label{obs:decrw}
Let $G$ and $H$ be $(\le\!4)$-rooted graphs with the same underlying unrooted graph such that $X_H\subseteq X_G$.
If $G$ is 4-light and $\rhoFour(G)\le 0$, then $H$ is also $4$-light and $\rhoFour(H)\le 0$.
\end{observation}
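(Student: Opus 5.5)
The plan is to compare the fragments of $H$ and $G$ directly. Since $H$ and $G$ share the same underlying graph and $X_H\subseteq X_G$, every fragment of $H$ avoids $X_H$ but may contain vertices of $X_G\setminus X_H$. We handle the density claim first, then $4$-lightness through Observation~\ref{obs:heavyfrag}.

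For the density, let $W=X_G\setminus X_H$. The fragment $W'=V(G)\setminus X_G$ of $G$ (if nonempty) satisfies $\rhoFour(H)=\rhoFour(G)+\rhoFour(H,W)$ where, in $H$, the vertices of $W$ are treated as non-roots. More concretely, passing from $G$ to $H$ adds $|W|$ non-root vertices and adds the edges of $G[X_G]$ incident with $W$ but not inside $X_H$. Assuming $G[X_G]$ is not a complete graph is unhelpful; instead I would bound directly: each $w\in W$ contributes $-4$ to $n$-term and at most $|X_G|-1\le 3$ new root-to-root edges. Thus adding the vertices of $W$ one at a time, each raises $\rho$ by at most $3$ and lowers $4n$ by $4$. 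Hence $\rhoFour(H)\le\rhoFour(G)-|W|\le 0$.

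For $4$-lightness, let $Y$ be a fragment of $H$ with $|\bd_HY|\le |X_H|-1$; we must show $\rhoFour(H,Y)\le 0$. Split $Y=Y_0\cup Y_1$ with $Y_1=Y\cap W$ and $Y_0=Y\setminus X_G$. If $Y_0\ne\emptyset$, then $\bd_GY_0\subseteq\bd_HY\cup Y_1$, so $|\bd_GY_0|\le |X_H|-1+|Y_1|\le |X_G|-1$ (as $|Y_1|\le|W|$), and $4$-lightness of $G$ gives $\rhoFour(G,Y_0)\le 0$. Then, adding the vertices of $Y_1$ one by one, each adds at most $\deg$ into previously counted-or-boundary roots; because $|X_G|\le 4$, a vertex of $W$ has at most $3$ neighbours within $X_G$, plus possibly neighbours in $Y_0$ which are already counted in $\rho(G,Y_0)$. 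So each step raises $\rho$ by at most $3$ while $|Y|$ grows by one, yielding $\rhoFour(H,Y)\le\rhoFour(G,Y_0)-|Y_1|\le 0$. If $Y_0=\emptyset$, then $Y\subseteq W$ and $\rho(H,Y)$ counts only edges within $X_G$ touching $Y$, at most $3|Y|$, so again $\rhoFour(H,Y)<0$.

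The only delicate point is the bookkeeping of edges incident with $Y_1$: edges from $Y_1$ to $Y_0$ are already counted in $\rho(G,Y_0)$, so only edges from a $W$-vertex to $X_G$ are new, and there are at most $|X_G|-1\le 3$ per vertex. This is where the hypothesis that $G$ is $(\le\!4)$-rooted is essential, and it is the step I would check most carefully.
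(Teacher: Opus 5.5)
Your decomposition $Y=Y_0\cup Y_1$, with $4$-lightness of $G$ applied to $Y_0$ via $\bd_GY_0\subseteq\bd_HY\cup Y_1$, is the same strategy as the paper's. Your direct count $\rhoFour(H)\le\rhoFour(G)-|W|$ is also correct: there, edges from $W$ to non-roots of $G$ really are already counted in $\rho(G)$. (The identity $\rhoFour(H)=\rhoFour(G)+\rhoFour(H,W)$ you state first is false, since it double counts those edges, but you do not use it.)

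The gap is in the step you flag as delicate. It is not true that the only edges counted in $\rho(H,Y)$ but not in $\rho(G,Y_0)$ go from $Y_1$ to $X_G$. A vertex $w\in Y_1\subseteq X_G$ may be adjacent to non-root vertices of $G$ outside $Y_0$. Such a neighbour lies in $\bd_HY\setminus X_G$, and the edge is new. For example, take $X_G=\{a,b,w\}$, $X_H=\{a,b\}$, $Y=\{w,y\}$ and $\bd_HY=\{z\}$, where $y,z$ are non-roots of $G$ and $wz\in E(G)$. Then $wz$ is missed by your count. The same error appears in your case $Y_0=\emptyset$: $\rho(H,Y)$ is not restricted to edges inside $X_G$. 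For instance, $\rho(H,\{w\})=\deg_G w$, and a root of $G$ can have arbitrarily many non-root neighbours. So ``at most $3|Y|$'' cannot come from $|X_G|\le 4$; it can only come from the hypothesis on $|\bd_HY|$.

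The repair is to use the boundary bound a second time, as the paper does. Every edge counted in $\rho(H,Y)$ but not in $\rho(G,Y_0)$ has one end in $Y_1$ and the other in $Y_1\cup\bd_HY$. Using $|Y_1|\le|W|$ and $|\bd_HY|\le|X_H|-1$, the number of such edges is at most
\[
\binom{|Y_1|}{2}+|Y_1|\cdot|\bd_HY|\le |Y_1|\bigl(|W|+|X_H|-1\bigr)=|Y_1|\bigl(|X_G|-1\bigr)\le 3|Y_1|.
\]
Hence $\rhoFour(H,Y)\le\rhoFour(G,Y_0)-|Y_1|\le 0$, with the convention $\rhoFour(G,\emptyset)=0$ covering $Y_0=\emptyset$. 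The paper also folds the density claim into this computation by allowing $Y=V(H)\setminus X_H$, where $\bd_HY\subseteq X_H$ and $Y_0=V(G)\setminus X_G$, so the same bound applies with $|X_H|$ in place of $|X_H|-1$.
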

\begin{proof}
Let $M=X_G\setminus X_H$.
Let $Y$ be any fragment of $H$ such that either $Y=V(H)\setminus X_H$ or $|\bd_HY|<|X_H|$; it suffices to prove that
$\rhoFour(H,Y)\le 0$.  Let $Y'=Y\setminus M$ and note that $\bd_GY'\subseteq \bd_HY\cup M$.  In particular,
we either have $Y'=V(G)\setminus X_G$, or $|\bd_GY'|\le |\bd_HY|+|M|<|X_H|+|M|=|X_G|$.  By the assumptions,
we have $\rhoFour(G,Y')\le 0$.  Let $m$ be the number of edges with one end in $M\cap Y$ and the other end
outside of $Y'$ (this includes the edges between the vertices of $M\cap Y$).  Note that
\begin{align*}
m&\le \binom{|M\cap Y|}{2}+|M\cap Y|\cdot |\bd_H Y|=|M\cap Y|\cdot \Bigl(\frac{|M\cap Y|-1}{2}+|\bd_H Y|\Bigr)\\
&\le |M\cap Y|\cdot (|M|+|\bd_H Y|)\le |M\cap Y|\cdot |X_G|\le 4|M\cap Y|.
\end{align*}
Therefore,
$$\rhoFour(H,Y)=\rhoFour(G,Y')+m-4|M\cap Y|\le\rhoFour(G,Y')\le 0,$$
as required.
\end{proof}

We can now combine Lemma~\ref{lem:den-no-reducible} with the results of the previous section to get the following consequence.
\begin{corollary}\label{cor:no-VH-in-light}
Let $G$ be a minimal \denco{} and let $(C,D)$ be a $(\le\!4)$-separation of $G$ such that $|(C\setminus D)\cup \bd_G (C\setminus D)|\ge 3$,
$\rhoFour(R_{C,D})\le 0$ and $R_{C,D}$ is $4$-light.  Let  $(A,B)$ be a $5$-separation of $G$ such that $B \subseteq D$.
If the $5$-rooted graph $R_{A,B}$ is $4$-light, then it is not quite heavy.
\end{corollary}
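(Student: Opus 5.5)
Suppose for a contradiction that $R_{A,B}$ is $4$-light and quite heavy. By Theorem~\ref{thm:VH}, $R_{A,B}$ contains a vampire or \Ktwofive{} as a rooted minor. The plan is to transfer this into the rooted graph $G'=R_{C,D}$ and then apply Corollary~\ref{cor:vatoredu} there. That corollary will produce a reducible $(\le\!4)$-fragment of $G$, which is impossible by Lemma~\ref{lem:den-no-reducible}.

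First we check that $(A\cap D,B)$ is a root $5$-separation of the $(\le\!4)$-rooted graph $G'$. Its roots $C\cap D$ lie in $A\cap D$ because $B\subseteq D$. To see this, note that $C\cap D\cap (B\setminus A)=\emptyset$: a vertex of $B\setminus A$ has all its neighbours in $B\subseteq D$. If such a vertex were also in $C\cap D$, then we could move it out of the separator. Cleaner is to simply note that $B\setminus A$ and $C\cap D$ may intersect, in which case we replace $(C,D)$ by handling it directly. I expect, however, that we should argue $C\cap D\subseteq A$ as follows. Any vertex $x\in (C\cap D)\cap(B\setminus A)$ has all neighbours in $B$. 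If $x$ had no neighbour in $C\setminus D$, then $(C\setminus\{x\},D)$ is a smaller-order separation whose right-hand side has the same $4$-density plus edges from $x$ counted, and this needs care. I would treat this as a minor technicality, possibly resolved by choosing the setup so that $B\setminus A\subseteq D\setminus C$.

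The right-hand side of $(A\cap D,B)$ in $G'$ is exactly $R_{A,B}$. Also, $G'$ is $4$-light with $\rhoFour(G')\le 0$ by hypothesis, and $|X_{G'}|\le 4$. Corollary~\ref{cor:vatoredu} therefore applies. Its outcome (i) needs five roots and so is excluded, which leaves outcome (ii): $G'$ contains a reducible $(\le\!4)$-fragment $Y$ with $B\setminus A\subseteq Y$.

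The remaining step, and the main obstacle, is to lift $Y$ to a reducible fragment of $G$. We have $Y\subseteq D\setminus C$, so $Y$ is a fragment of $G$ with $\bd_GY=\bd_{G'}Y$ and $R^G_Y=R^{G'}_Y$. Conditions (i) and (iii) of reducibility therefore carry over verbatim. For (ii) we need $|V(G)\setminus Y|\ge 3$. This holds because $V(G)\setminus Y\supseteq (C\setminus D)\cup\bd_G(C\setminus D)$, which has size at least $3$ by assumption; indeed $\bd_G(C\setminus D)\subseteq C\cap D$ is disjoint from $Y$. Thus $Y$ is reducible in $G$, contradicting Lemma~\ref{lem:den-no-reducible}. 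I expect the only delicate points to be confirming that the root-separation requirement holds in $G'$, and making sure the case where $(C,D)$ is improper or the roots meet $B\setminus A$ causes no trouble.
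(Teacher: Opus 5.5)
Your outline is the paper's: Theorem~\ref{thm:VH}, then Corollary~\ref{cor:vatoredu} inside $R_{C,D}$, then lifting the reducible fragment to contradict Lemma~\ref{lem:den-no-reducible}. However, the point you set aside is a genuine gap. Corollary~\ref{cor:vatoredu} requires $(A\cap D,B)$ to be a \emph{root} separation of $R_{C,D}$, i.e.\ $C\cap D\subseteq A$, and the hypotheses do not give this. Nothing in the statement forces every vertex of $C\cap D$ to have a neighbour in $C\setminus D$, and a vertex of $C\cap D$ without such a neighbour may well lie in $B\setminus A$. You cannot ``choose the setup'' to avoid this, since $(C,D)$ and $(A,B)$ are given. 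Dropping such a vertex $x$ from $C$ is also not free. In the new right-hand side $x$ is no longer a root, so $\rhoFour$ changes and new fragments containing $x$ appear. Both hypotheses of Corollary~\ref{cor:vatoredu} ($4$-lightness and $\rhoFour\le 0$) then have to be re-proved.

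The missing ingredient is Observation~\ref{obs:decrw}. Let $M$ be the set of vertices of $C\cap D$ with no neighbour in $C\setminus D$, and let $C'=C\setminus M$. Then $(C',D)$ is a $(\le\!4)$-separation with $C'=(C\setminus D)\cup\bd_G(C\setminus D)$, which is exactly why the hypothesis is stated for this set. Every root in $C'\cap D$ has a neighbour in $C\setminus D\subseteq A\setminus B$ (as $B\subseteq D$), hence lies in $A$. Observation~\ref{obs:decrw} shows that demoting the roots in $M$ keeps $H=R_{C',D}$ $4$-light with $\rhoFour(H)\le 0$. The reason is that each demoted vertex lying in a fragment contributes at most $|C\cap D|\le 4$ newly counted edges, which offsets its $-4$. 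With $H$ in place of $R_{C,D}$ your argument goes through. In the lifting step the fragment $Y$ may now contain vertices of $M$, but these have all their neighbours in $D$. So you still have $\bd_GY=\bd_HY$, $R^G_Y=R^H_Y$, and $|V(G)\setminus Y|\ge|C'|\ge 3$.
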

\begin{proof}
Let $M$ be the set of vertices of $C\cap D$ with no neighbor in $C\setminus D$, and let $C'=C\setminus M$.
Then $(C',D)$ is also a $(\le\!4)$-separation of $G$.  Let $H=R_{C',D}$.  Observation~\ref{obs:decrw} implies that $\rhoFour(H)\le 0$
and $H$ is $4$-light.

Suppose for a contradiction that $R^G_{A,B}$ is quite heavy.  Since $R^G_{A,B}$ is also $4$-light, Theorem~\ref{thm:VH} implies
that $R^G_{A,B}$ contains a vampire or \Ktwofive{} as a rooted minor.  Let $A'=A\cap D$. Since each vertex of $X_H=C'\cap D$ has a neighbor in $C\setminus D\subseteq A\setminus B$,
we have $X_H\subseteq A$, and thus $(A', B)$ is a root $5$-separation of $H$.
Moreover, note that $R^H_{A',B}=R^G_{A,B}$. By Corollary~\ref{cor:vatoredu}
applied to $H$ and $(A',B)$, whose outcome (i) is excluded since $|X_H|=|C'\cap D|\le 4$,
we see that $H$ contains a reducible $(\le\!4)$-fragment $Y$.  Since
$|V(G)\setminus Y|\ge |C'|=|(C\setminus D)\cup\bd_G(C\setminus D)|\ge 3$,
the fragment $Y$ is also reducible in the unrooted graph $G$.  This contradicts Lemma~\ref{lem:den-no-reducible}.
\end{proof}

This implies the following consistency result on $(\le\!4)$-separations and $5$-separations.

\begin{lemma}\label{lem:one-VH-light}
Let $G$ be a minimal \denco{} and let $(A,B)$ be a $5$-separation of $G$.  If the rooted graph $R_{A,B}$ is quite heavy,
then the ``opposite'' 5-rooted graph $R_{B,A}$ is $4$-light.
\end{lemma}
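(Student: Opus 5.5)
We argue by contradiction. Suppose that $R_{A,B}$ is quite heavy but $R_{B,A}$ is not $4$-light. The plan is to use $4$-bilightness of $G$ twice: once to turn the heavy part of $R_{B,A}$ into a light $(\le\!4)$-separation on the $B$ side, and once to show $R_{A,B}$ is $4$-light. Then Corollary~\ref{cor:no-VH-in-light} gives the contradiction.

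\textbf{Step 1: a dense fragment on the $A$ side.} The roots of $R_{B,A}$ are $A\cap B$. By Observation~\ref{obs:heavyfrag}, there is a $(\le\!4)$-fragment $S$ of $R_{B,A}$ with $\rhoFour(R_{B,A},S)>0$. Such an $S$ satisfies $S\subseteq A\setminus B$. Hence $\bd_GS=\bd_{R_{B,A}}S\subseteq A$, so $S$ is a $(\le\!4)$-fragment of $G$ with $\rhoFour(G,S)>0$. Since no edge joins $A\setminus B$ to $B\setminus A$, the set $S$ is disjoint from and non-adjacent to every fragment $T\subseteq V(G)\setminus(S\cup\bd_GS)$, and in particular to every $T\subseteq B\setminus A$. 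So $4$-bilightness gives the following key fact:
\begin{center}
every $(\le\!4)$-fragment $T$ of $G$ with $T\cap(S\cup\bd_GS)=\emptyset$ satisfies $\rhoFour(G,T)\le 0$.
\end{center}

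\textbf{Step 2: verify the hypotheses of Corollary~\ref{cor:no-VH-in-light}.} Let $C=S\cup\bd_GS$ and $D=V(G)\setminus S$. Then $(C,D)$ is a $(\le\!4)$-separation with $C\cap D=\bd_GS$ and $C\setminus D=S$.
\begin{itemize}
\item $B\subseteq D$, because $S\subseteq A\setminus B$.
\item $\rhoFour(R_{C,D})\le 0$. Indeed, $\rhoFour(R_{C,D})=\rhoFour(G,D\setminus C)$. The set $D\setminus C$ contains $B\setminus A\neq\emptyset$, so it is a fragment, and its boundary lies in $\bd_GS$. The key fact then applies.
\item $R_{C,D}$ is $4$-light. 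Its root $(\le\!(|\bd_GS|-1))$-separations correspond to $(\le\!4)$-fragments of $G$ inside $D\setminus C$, and the key fact applies to each of them.
\item $|S\cup\bd_GS|\ge 3$. As noted in the proof of Lemma~\ref{lemma:preserve-bilight}, $|S|\ge 2$. If $\bd_GS=\emptyset$, then $S$ is a union of components, and $\rhoFour(G,S)>0$ forces $\binom{|S|}{2}>4|S|$, so $|S|\ge 10$. In either case the bound holds.
\end{itemize}

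\textbf{Step 3: $R_{A,B}$ is $4$-light.} A fragment of $R_{A,B}$ with boundary of size at most $4$ is a $(\le\!4)$-fragment $T$ of $G$ with $T\subseteq B\setminus A$. Such a $T$ is disjoint from $S\cup\bd_GS\subseteq A$, so the key fact gives $\rhoFour(G,T)\le 0$. By Observation~\ref{obs:heavyfrag}, $R_{A,B}$ is $4$-light.

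\textbf{Conclusion.} Corollary~\ref{cor:no-VH-in-light} now applies to $(C,D)$ and $(A,B)$, and says that $R_{A,B}$ is not quite heavy. This contradicts our assumption. The only delicate points are the bookkeeping that $S$ lies strictly inside $A\setminus B$ and the verification that $D\setminus C$ is non-empty. I do not expect a substantial obstacle, since the heavy lifting was done in Corollary~\ref{cor:no-VH-in-light}.
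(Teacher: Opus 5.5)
Your proof is correct and follows essentially the same route as the paper. You take a dense $(\le\!4)$-fragment $S\subseteq A\setminus B$, use $4$-bilightness against it to show both that $R_{A,B}$ is $4$-light and that $(S\cup\bd_GS,V(G)\setminus S)$ satisfies the hypotheses of Corollary~\ref{cor:no-VH-in-light}, and then conclude. Your one unstated detail, $B\setminus A\neq\emptyset$, holds because a quite heavy rooted graph has positive $4$-density and therefore has a non-root vertex.
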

\begin{proof}
Suppose for a contradiction that $R_{B,A}$ is not $4$-light, and thus there exists a $(\le\!4)$-fragment $L\subseteq A\setminus B$
such that $\rhoFour(R_{B,A}, L)>0$. Note that $\rhoFour(G,L)=\rhoFour(R_{B,A},L)$, and thus also $\rhoFour(G,L)>0$.

It follows that the $5$-rooted graph $R_{A,B}$ is $4$-light.  Indeed, otherwise there would exist a $(\le\!4)$-fragment
$U\subseteq B\setminus A$ such that $\rhoFour(G,U)=\rhoFour(R_{A,B},U)>0$.  However, then the $(\le\!4)$-bifragment $(L,U)$ of $G$ would be dense,
a contradiction since $G$ is $4$-bilight.  

Consider the $(\le\!4)$-separation $(C,D)=(L\cup\bd_G L, V(G)\setminus L)$ of $G$.
We have $|(C\setminus D)\cup\bd_G(C\setminus D)|=|L\cup\bd_G L|\ge 6$, since $\rhoFour(G,L)>0$.
Note that $\rhoFour(R_{C,D})\le 0$ and the rooted graph $R_{C,D}$ is $4$-light, as otherwise there would exist a $(\le\!4)$-fragment $U'\subseteq D\setminus C$ such that
$\rhoFour(G,U')>0$, and the dense $(\le\!4)$-bifragment $(L,U')$ would contradict the $4$-bilightness of $G$.
Since $B \subseteq D$, the 5-separation $(A,B)$ contradicts Corollary~\ref{cor:no-VH-in-light}. \end{proof}

For a later use, let us note the following consequence.
\begin{corollary}\label{cor:VH-fragment-vs-4sep}
Let $G$ be a minimal \denco{}, let $(C,D)$ be a $(\le\!4)$-separation of $G$, and let $Y$ be a 5-fragment of $G$ such that
$Y\cup\bd_GY\subseteq C$.  If $R_Y$ is quite heavy, then
\begin{itemize}
\item[(i)] $|C\setminus D|\ge 3$,
\item[(ii)] $\rhoFour(R_{C,D})\le 0$, and
\item[(iii)] $R_{C,D}$ is $4$-light.
\end{itemize}
\end{corollary}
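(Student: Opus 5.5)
We apply Lemma~\ref{lem:one-VH-light} to the $5$-separation $(A,B)=(V(G)\setminus Y, Y\cup\bd_GY)$, whose right-hand side is $R_{A,B}=R_Y$. Since $R_Y$ is quite heavy, the lemma shows that the opposite $5$-rooted graph $R_{B,A}$ is $4$-light. The three claims then come from locating the relevant fragments of $(C,D)$ inside $A\setminus B$. The key inclusion is that $D\setminus C$ is disjoint from $Y\cup\bd_GY\subseteq C$. Hence $D\setminus C\subseteq A\setminus B$, and likewise every subset of $D\setminus C$ lies in $A\setminus B$.

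For (ii), first dispose of the case $D\setminus C=\emptyset$: then $\rhoFour(R_{C,D})=0$ and $R_{C,D}$ is trivially $4$-light. Otherwise $U=D\setminus C$ is a fragment of $G$ with $|\bd_GU|\le|C\cap D|\le 4$ and $U\subseteq A\setminus B$. Its neighbourhood in $R_{B,A}$ agrees with that in $G$, because the roots of $R_{B,A}$ are $\bd_GY$ and $U$ sees no vertex of $Y$. Thus $U$ is a $(\le\!4)$-fragment of the $5$-rooted graph $R_{B,A}$, and $\rhoFour(R_{B,A},U)=\rhoFour(G,U)=\rhoFour(R_{C,D})$. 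Since $R_{B,A}$ is $4$-light, Observation~\ref{obs:heavyfrag} gives $\rhoFour(R_{C,D})\le 0$.

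For (iii), take any root $(\le\!(|C\cap D|-1))$-fragment $U'$ of $R_{C,D}$. It is a subset of $D\setminus C$ whose boundary in $G$ equals its boundary in $R_{C,D}$, so it has at most $3$ boundary vertices. By the same argument as in (ii), $U'$ is a $(\le\!4)$-fragment of $R_{B,A}$ with the same $4$-density, and $4$-lightness of $R_{B,A}$ yields $\rhoFour(R_{C,D},U')\le 0$. By Observation~\ref{obs:heavyfrag}, $R_{C,D}$ is $4$-light.

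For (i), we have $Y\subseteq C\setminus D$, because vertices of $Y$ have all their neighbours in $Y\cup\bd_GY\subseteq C$, but a vertex of $Y$ could still lie in $C\cap D$. So we need a separate argument that $|C\setminus D|\ge 3$, and this is where I expect the main difficulty, since it must use quite-heaviness directly. The plan is to bound $|Y|$ from below. If $|Y|=1$, then $R_Y$ has a single non-root vertex adjacent to all five roots, and its $4$-density is $5-4=1$. Such a graph is not quite heavy: $\rhoFour<2$, and in the case $\rhoFour=1$ the definition explicitly excludes this. Hence $|Y|\ge 2$, and quite-heaviness together with $\rho(R_Y)\le 5|Y|+\binom{|Y|}{2}$ forces $|Y|\ge 3$ (for $|Y|=2$ we get $\rho\le 11$, so $\rhoFour\le 3$, and this needs to be ruled out more carefully).

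Any vertex of $Y$ lying in $C\cap D$ must have no neighbour in $D\setminus C$; otherwise that neighbour would be in $\bd_GY\cup Y\subseteq C$, a contradiction. Such vertices could therefore be moved from $C\cap D$ into $C\setminus D$, but the separation $(C,D)$ is fixed. A simpler route is to count: $C\supseteq Y\cup\bd_GY$ has at least $7$ vertices while $|C\cap D|\le4$, so $|C\setminus D|\ge 3$. This counting argument settles (i) as soon as $|Y|\ge 2$, which was shown above.
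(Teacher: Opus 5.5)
Your proof is correct and is essentially the paper's: you apply Lemma~\ref{lem:one-VH-light} to $(V(G)\setminus Y,\,Y\cup\bd_GY)$ and read (ii) and (iii) off the $4$-lightness of the opposite side. Part (i) is the same count $|C|\ge|Y|+|\bd_GY|\ge 7$. Phrasing (ii) through the fragment $D\setminus C$, instead of a root separation, spares you the paper's step of trimming $D$ to $D'$ so that no vertex of $Y$ remains in it. You should delete two stray claims in (i), neither of which your final argument uses: that $Y\subseteq C\setminus D$ (false, since a vertex of $Y$ may lie in $C\cap D$), and that quite-heaviness forces $|Y|\ge 3$ (false, e.g.\ two adjacent vertices both adjacent to all five roots give $\rhoFour=3$).
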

\begin{proof}
Since $R_Y$ is quite heavy, we have $|Y|=n(R_Y)\ge 2$.  Consequently, $|C|\ge |Y|+|\bd_GY|\ge 7$,
and since $|C\cap D|\le 4$, this gives (i).

Let us now prove (ii).  Let $M$ be the set of all vertices $v\in C\cap D$ with no neighbor in $D\setminus C$
and let $D'=D\setminus M$.  Then $(C,D')$ is also a $(\le\!4)$-separation of $G$ and $\rhoFour(R_{C,D})=\rhoFour(R_{C,D'})$.
Hence, it suffices to argue that $\rhoFour(R_{C,D'})\le 0$.  Since each vertex of $C\cap D'$ has a neighbor in $D'\setminus C=D\setminus C$,
and since $\bd_GY\subseteq C$, we have $Y\subseteq C\setminus D'$.

Let $(A,B)=(V(G)\setminus Y,Y\cup\bd_GY)$.  Then $R_{A,B}=R_Y$ is quite heavy, and thus Lemma~\ref{lem:one-VH-light}
implies that $R_{B,A}$ is $4$-light.  Note that $(A\cap C,D')$ is a root $(\le\!4)$-separation of $R_{B,A}$
with the right-hand side equal to $R_{C,D'}$, and thus $\rhoFour(R_{C,D'})\le 0$.

Finally, let us prove (iii).  Consider any root $(\le\!4)$-separation $(I,J)$ of $H=R_{C,D}$ and let $I'=I\cup C$.  Then $(I',J)$ is a $(\le\!4)$-separation
of $G$ with the same right-hand side and $Y\cup \bd_GY\subseteq C\subseteq I'$, and thus (ii) gives
$\rhoFour(R^H_{I,J})=\rhoFour(R^G_{I',J})\le 0$.  It follows that $R_{C,D}$ is $4$-light.
\end{proof}

\noindent For the next step, let us show a simple consequence of Dvořák's density result.
\begin{lemma}\label{lem:D-target}
Let $G$ be a $4$-light $5$-rooted graph and let $m=10-|E(G[X_G])|$ be the number of non-edges between
its roots.  If
$$\rhoFour(G)\ge \Bigl\lceil\frac{m+3}{2}\Bigr\rceil,$$
then $K_5$ is a rooted minor of $G$.
\end{lemma}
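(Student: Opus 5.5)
The plan is to deduce the statement directly from Theorem~\ref{thm:D-target}, applied to the graph of the \emph{missing} root edges. Let $F$ be the graph with vertex set $X_G$ whose edges are exactly the $m$ pairs of non-adjacent roots of $G$; that is, $F$ is the complement of $G[X_G]$ on $X_G$.

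If $F$ is an $\id$-rooted minor of $G$, then $K_5$ is a rooted minor of $G$. Indeed, take an $\id$-rooted model $\mu$ of $F$ and use the same bags. Each bag $\mu(x)$ contains the root $x$, and the bags are pairwise disjoint. For a pair $x,x'$ of roots that is non-adjacent in $G$, the model $\mu$ supplies an edge between $\mu(x)$ and $\mu(x')$. For a pair that is adjacent in $G$, the edge $xx'$ itself joins the two bags. Hence $\mu$ extends to an $\id$-rooted model of $K_5$ on $X_G$.

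By Theorem~\ref{thm:D-target}, $G$ is $\calT_{\rhoFour(G)}$-universal. So it suffices to check that $F\in\calT_{\rhoFour(G)}$; universality then gives a $\pi$-rooted model for every bijection $\pi$, in particular for the identity. The classes $\calT_t$ are nondecreasing in $t$: reading the table left to right, each class contains the previous one, since $\calS_{5,3}\subseteq\calS_{5,4}^-$ and $\calS_{5,4}^-\subseteq\calS_{5,6}$. It is therefore enough to show $F\in\calT_{t_m}$ with $t_m=\lceil (m+3)/2\rceil$, and to do this by going through $m=0,\dots,10$ using the table:
\begin{itemize}
\item $m\le 1$ gives $t_m=2$, and $F\in\calS_{5,3}$.
\item $m\in\{2,3\}$ gives $t_m=3$. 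Here $F$ has at most $3$ edges, so $F\in\calS_{5,4}^{-}$; the only graph removed from $\calS_{5,4}$ to form this class is $K_2+K_3$, which has $4$ edges.
\item $m\in\{4,5\}$ gives $t_m=4$, and $F\in\calS_{5,6}$.
\item $m\in\{6,7\}$ gives $t_m=5$, and $F\in\calS_{5,8}$.
\item $m\in\{8,9\}$ gives $t_m=6$, and $F\in\calS_{5,9}$.
\item $m=10$ gives $t_m=7$, and $F\in\calS_{5,10}$.
\end{itemize}
(For $m=0$ the conclusion is in any case trivial, since $G[X_G]$ is already a clique.)

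There is no real obstacle in this argument. The only points needing care are the case $m=3$, $t=3$, where one must note that the exclusion of $K_2+K_3$ from $\calS_{5,4}^-$ does not matter because $F$ has only three edges, and the observation that the rooted model of $F$ may freely be combined with the existing root edges of $G$.
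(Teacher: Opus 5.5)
Your proof is correct and is essentially the paper's argument: apply Theorem~\ref{thm:D-target} to the complement of $G[X_G]$ on $X_G$, check from the table that $\calT_{\lceil (m+3)/2\rceil}$ contains every $5$-vertex graph with $m$ edges, and then add back the existing root edges to obtain $K_5$. Your explicit check that the targets are nested ($\calT_t\subseteq\calT_{t'}$ for $t\le t'$) plays the role of the paper's function $f(t)$, whose ``for every $t'\ge t$'' clause handles the case $\rhoFour(G)>\lceil (m+3)/2\rceil$ in the same way.
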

\begin{proof}
Let $b(m)=\lceil\tfrac{m+3}{2}\rceil$.
For each integer $t$, let $f(t)$ denote the largest integer~$s \leq 10$ such that for every $t'\ge t$, the $t'$-target contains all $5$-vertex
graphs with at most $s$ edges.  The following table lists the relevant values:
\begin{center}
\begin{tabular}{c|c|c|c}
$m$      & $b(m)$ & the $b(m)$-target & $f(b(m))$\\
\hline
$0$, $1$ & $2$    & $\calS_{5,3}$     & $3$\\
$2$, $3$ & $3$    & $\calS_{5,4}^-$   & $3$\\
$4$, $5$ & $4$    & $\calS_{5,6}$     & $6$\\
$6$, $7$ & $5$    & $\calS_{5,8}$     & $8$\\
$8$, $9$ & $6$    & $\calS_{5,9}$     & $9$\\
$10$     & $7$    & $\calS_{5,10}$    & $10$
\end{tabular}
\end{center}
An inspection of this table shows that $f(b(m))\ge m$.
Since $\rhoFour(G)\ge b(m)$ by the assumptions, it follows that the $\rhoFour(G)$-target $\calT$ contains all $5$-vertex
graphs with $m$ edges.  In particular, it contains the complement $S$ of the graph $G[X_G]$.
By Theorem~\ref{thm:D-target}, it follows that $S$ is an $\id$-rooted minor of $G$.
Therefore, the graph $S\cup G[X_G]$ isomorphic to $K_5$ is an $\id$-rooted minor of $G$.
\end{proof}

We can now show that every 5-separation in a minimal \denco{} has a not quite heavy side.

\begin{lemma}\label{lem:not-both-VH}
Let $(A,B)$ be a $5$-separation of a minimal \denco{} $G$, and let $m=10-|E(G[A\cap B])|$
denote the number of missing edges between the vertices of $A\cap B$.
Then exactly one of the 5-rooted graphs $R_{A,B}$ and $R_{B,A}$ is quite heavy,
and moreover the quite heavy one has $4$-density at least $m+2$.
\end{lemma}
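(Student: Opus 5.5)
The plan rests on a density identity across the $5$-separation. Write $\rho_1=\rhoFour(R_{A,B})$ and $\rho_2=\rhoFour(R_{B,A})$. Every edge of $G$ is of exactly one of three kinds: an edge with a non-root end in $R_{A,B}$, an edge with a non-root end in $R_{B,A}$, or an edge of $G[A\cap B]$. Also $|V(G)|=n(R_{A,B})+n(R_{B,A})+5$. Hence
$$\rhoFour(G)=\rho_1+\rho_2+(10-m)-20 .$$
Since $G$ is a \denco{}, $\rhoFour(G)\ge -7$, which gives the key inequality $\rho_1+\rho_2\ge m+3$.

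\emph{At least one side is quite heavy.} A $5$-rooted graph that is not quite heavy has $4$-density at most $1$: either $\rhoFour\le 0$, or $\rhoFour=1$ and some non-root vertex sees all five roots. If neither side were quite heavy, we would get $\rho_1+\rho_2\le 2<m+3$, a contradiction. The same observation gives the density claim once uniqueness is established. If $R_{A,B}$ is the quite heavy side and $R_{B,A}$ is not, then $\rho_2\le 1$, so $\rho_1\ge m+3-1=m+2$.

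\emph{At most one side is quite heavy.} Suppose both $R_{A,B}$ and $R_{B,A}$ are quite heavy. Lemma~\ref{lem:one-VH-light} applied to $(A,B)$ shows that $R_{B,A}$ is $4$-light. Applied to $(B,A)$, it shows that $R_{A,B}$ is $4$-light. Since $\rho_1+\rho_2\ge m+3$, by symmetry we may assume $\rho_1\ge\lceil\frac{m+3}{2}\rceil$. Lemma~\ref{lem:D-target} then yields $K_5$ as a rooted minor of $R_{A,B}$; its model lives in $B$ and contains the five roots $A\cap B$. On the other side, $R_{B,A}$ is $4$-light and quite heavy, so Theorem~\ref{thm:VH} yields a vampire or \Ktwofive{} as a rooted minor of $R_{B,A}$, living in $A$.

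By Observation~\ref{obs:seprepl} the two rooted models glue along $A\cap B$. This gives a $7$-vertex minor of $G$ formed by a clique on the five roots together with $p,q$, with the edges joining $p,q$ to the roots as in the vampire or \Ktwofive{}. For a vampire, $pq$ is an edge and at most the two independent edges $px_2$ and $qx_1$ are missing, so the resulting graph contains $\target$. For \Ktwofive{}, only $pq$ is missing, so we get $K_7^-\supseteq\target$. Either way this contradicts $\target$-minor-freeness of $G$.

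The only step needing care is the gluing. The roots must be mapped consistently: the $\id$-rooted $K_5$ is fine for any bijection. The two models must also be disjoint outside $A\cap B$, which holds because one lives in $B$ and the other in $A$. I expect no real obstacle beyond correctly invoking Lemma~\ref{lem:one-VH-light} to obtain $4$-lightness of both sides before applying Lemma~\ref{lem:D-target} and Theorem~\ref{thm:VH}.
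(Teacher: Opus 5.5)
Your proof is correct and follows the paper's argument essentially step for step. You use the same density identity to get $\rhoFour(R_{A,B})+\rhoFour(R_{B,A})\ge m+3$, apply Lemma~\ref{lem:one-VH-light} in both directions to make both sides $4$-light, then use Lemma~\ref{lem:D-target} on the denser side and Theorem~\ref{thm:VH} on the other, and glue the two rooted models into a supergraph of $\target$. The only cosmetic difference is how you show that at least one side is quite heavy: you note that a side which is not quite heavy has $4$-density at most $1$, whereas the paper observes directly that the denser side has $4$-density at least $\lceil\frac{m+3}{2}\rceil\ge 2$.
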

\begin{proof}
By symmetry, we can assume that $\rhoFour(R_{A,B})\ge \rhoFour(R_{B,A})$.
Note that
\begin{align*}
-7&\le \rhoFour(G)=\rhoFour(R_{A,B})+\rhoFour(R_{B,A})+|E(G[A\cap B])|-4|A\cap B|\\
&=\rhoFour(R_{A,B})+\rhoFour(R_{B,A})-10-m.
\end{align*}
Therefore, we have
\begin{equation}\label{eq:rhosum}
\rhoFour(R_{A,B})+\rhoFour(R_{B,A})\ge m+3,
\end{equation}
and since $\rhoFour(R_{A,B})\ge \rhoFour(R_{B,A})$ is an integer,
\begin{equation}\label{eq:rhocount}
\rhoFour(R_{A,B})\ge \Bigl\lceil\frac{m+3}{2}\Bigr\rceil\ge 2.
\end{equation}
This implies that $R_{A,B}$ is quite heavy.

Suppose now for a contradiction that $R_{B,A}$ is quite heavy as well.
By Lemma~\ref{lem:one-VH-light}, the 5-rooted graphs $R_{A,B}$ and $R_{B,A}$ are both $4$-light.
In particular, Theorem~\ref{thm:VH} implies that $R_{B,A}$ has an $\id$-rooted minor $W$,
where $W$ is a vampire or \Ktwofive{}.  On the other hand, given~\eqref{eq:rhocount},
Lemma~\ref{lem:D-target} implies that $R_{A,B}$ has an $\id$-rooted minor $K$ isomorphic to $K_5$.
Consequently, $G$ contains the graph $K\cup W$ as a minor.  However, note that $K\cup W$ is
isomorphic to $\target$ or one of its supergraphs, which is a contradiction since $G$ is $\target$-minor-free.

Therefore, the 5-rooted graph $R_{B,A}$ is not quite heavy.  It follows that $\rhoFour(R_{B,A})\le 1$, and
\eqref{eq:rhosum} gives $\rhoFour(R_{A,B})\ge m+2$.
\end{proof}

\noindent Let us restate a useful observation appearing implicitly in the proof of Lemma~\ref{lem:not-both-VH}.

\begin{lemma}\label{lem:K5-in-light-VH}
Let $(A,B)$ be a $5$-separation of a minimal \denco{} $G$.  If the 5-rooted graph $R_{A,B}$ is quite heavy and
$4$-light, then it contains $K_5$ as a rooted minor.
\end{lemma}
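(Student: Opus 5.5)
The plan is to reuse the density bookkeeping from the proof of Lemma~\ref{lem:not-both-VH} and then apply Lemma~\ref{lem:D-target}. Let $m=10-|E(G[A\cap B])|$. As computed there, the assumption $\rhoFour(G)\ge -7$ gives
$$\rhoFour(R_{A,B})+\rhoFour(R_{B,A})\ge m+3.$$
By Lemma~\ref{lem:not-both-VH}, exactly one of $R_{A,B}$ and $R_{B,A}$ is quite heavy, and the quite heavy one has $4$-density at least $m+2$. We are assuming that $R_{A,B}$ is quite heavy, so $R_{B,A}$ is not. Hence $\rhoFour(R_{A,B})\ge m+2$.

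Next, check that this bound is enough for Lemma~\ref{lem:D-target}. That lemma needs $\rhoFour(R_{A,B})\ge\lceil (m+3)/2\rceil$. This follows from $m+2\ge\lceil(m+3)/2\rceil$, which holds for every integer $m\ge 0$, since $2(m+2)\ge m+4\ge m+3$.

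Finally, $R_{A,B}$ is $4$-light by assumption. Lemma~\ref{lem:D-target} then yields $K_5$ as a rooted minor of $R_{A,B}$.

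There is no real obstacle; the only point to take care of is invoking Lemma~\ref{lem:not-both-VH} in the right direction to get the lower bound on $\rhoFour(R_{A,B})$. Alternatively, one can avoid that lemma and argue directly: if $\rhoFour(R_{A,B})\ge\rhoFour(R_{B,A})$, the displayed inequality immediately gives $\rhoFour(R_{A,B})\ge\lceil(m+3)/2\rceil$. Otherwise $\rhoFour(R_{B,A})>\rhoFour(R_{A,B})\ge 1$, so $\rhoFour(R_{B,A})\ge 2$ and $R_{B,A}$ is quite heavy too, which Lemma~\ref{lem:not-both-VH} forbids.
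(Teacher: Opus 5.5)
Your proof is correct and is essentially the paper's argument: both use Lemma~\ref{lem:not-both-VH} to get $\rhoFour(R_{A,B})\ge m+2\ge\lceil\frac{m+3}{2}\rceil$ and then apply Lemma~\ref{lem:D-target}. Your alternative case split is also valid.
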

\begin{proof}
Let $m=10-|E(G[A\cap B])|$.  By Lemma~\ref{lem:not-both-VH}, we have
$$\rhoFour(R_{A,B})\ge m+2\ge \Bigl\lceil \frac{m+3}{2}\Bigr\rceil,$$
and thus Lemma~\ref{lem:D-target} implies that $K_5$ is a rooted minor of $R_{A,B}$.
\end{proof}

Finally, we show that the quite heavy sides are consistent between non-crossing $5$-separations.

\begin{lemma}\label{lem:no-nested-VH}
Let $G$ be a minimal \denco{} and let $(S,T)$ be a $5$-bifragment in $G$.
Then at most one of the $5$-rooted graphs $R_S$ and $R_T$ is quite heavy.
\end{lemma}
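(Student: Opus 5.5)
We argue by contradiction: suppose $(S,T)$ is a $5$-bifragment of a minimal \denco{} $G$ such that both $R_S$ and $R_T$ are quite heavy. The plan is to exhibit a rooted $K_5$ on one side and a rooted vampire or \Ktwofive{} on the other side, with the same roots. Together these give $\target$ as a minor, exactly as in the proof of Lemma~\ref{lem:not-both-VH}.

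Let $(A,B)=(V(G)\setminus S,S\cup\bd_GS)$, so that $R_{A,B}=R_S$, and let $(A_T,B_T)=(V(G)\setminus T,T\cup\bd_GT)$, so that $R_{A_T,B_T}=R_T$.
\begin{itemize}
\item Since $R_S$ is quite heavy, Lemma~\ref{lem:one-VH-light} shows that the opposite $5$-rooted graph $R_{B,A}$ is $4$-light. Symmetrically, $R_{B_T,A_T}$ is $4$-light.
\item $R_S$ itself is $4$-light. Every $(\le\!4)$-fragment $U\subseteq S$ of $R_S$ is also a $(\le\!4)$-fragment of $R_{B_T,A_T}$, because $S\subseteq A_T\setminus B_T$. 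Moreover, $\rhoFour$ of $U$ is the same in both rooted graphs. So $4$-lightness of $R_{B_T,A_T}$ gives $\rhoFour(R_S,U)\le 0$. The same argument shows that $R_T$ is $4$-light.
\item Since $R_S$ is quite heavy and $4$-light, Lemma~\ref{lem:K5-in-light-VH} shows that $R_S$ contains $K_5$ as a rooted minor.
\item Since $R_T$ is quite heavy and $4$-light, Theorem~\ref{thm:VH} shows that $R_T$ contains a vampire or \Ktwofive{} as a rooted minor.
\end{itemize}

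The main step is to transfer the rooted vampire or \Ktwofive{} from the roots $\bd_GT$ to the roots $\bd_GS$. For this we apply Corollary~\ref{cor:vatoredu} to the $5$-rooted $4$-light graph $R_{B,A}$, whose root set is $\bd_GS$, together with the separation $(A\setminus T,\,T\cup\bd_GT)$.

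First we check that this is a root $5$-separation of $R_{B,A}$. Since $G$ has no edges between $S$ and $T$, no vertex of $T$ has a neighbour in $S$. Hence $\bd_GS\cap T=\emptyset$, so the roots lie in $A\setminus T$. Also $T\cup\bd_GT\subseteq A$, and the order of the separation is $|\bd_GT|=5$. Its right-hand side is $R_T$, which contains a vampire or \Ktwofive{} as a rooted minor, so the corollary applies.

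We then rule out each outcome of Corollary~\ref{cor:vatoredu}.
\begin{itemize}
\item Outcome (i): $R_{B,A}$ contains a vampire or \Ktwofive{} $W$ as a rooted minor on the roots $\bd_GS$. By Observation~\ref{obs:seprepl}, we can glue $W$ with the rooted $K_5$ found in $R_S$ along the common roots. This gives a minor of $G$ isomorphic to $\target$ or to a supergraph of it, a contradiction.
\item Outcome (ii): $R_{B,A}$ has a reducible $(\le\!4)$-fragment $Y\supseteq T$. Then $Y\subseteq A\setminus B$, so $Y$ is also a $(\le\!4)$-fragment of $G$, with the same boundary and the same right-hand side. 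We also have $|V(G)\setminus Y|\ge|S\cup\bd_GS|\ge 7$, because $n(R_S)\ge 2$ for a quite heavy $R_S$. Thus condition (ii) of reducibility holds in $G$, and $Y$ is reducible in $G$. This contradicts Lemma~\ref{lem:den-no-reducible}.
\end{itemize}

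The step I expect to need the most care is this transfer: verifying that $(A\setminus T,\,T\cup\bd_GT)$ really is a root separation of $R_{B,A}$, and that reducibility of $Y$ passes from $R_{B,A}$ to $G$. Both rely only on $S$ and $T$ being disjoint with no edges between them.
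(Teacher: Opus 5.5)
Your proof is correct, but the transfer step takes a different route from the paper's.

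\textbf{The paper's route.} It first shows that $G$ has five vertex-disjoint paths from $S\cup\bd_GS$ to $T\cup\bd_GT$. A $(\le\!4)$-separation $(C,D)$ separating these sets is ruled out in two steps. Corollary~\ref{cor:VH-fragment-vs-4sep}, applied to the fragment $S\subseteq C$, makes $R_{C,D}$ $4$-light with $\rhoFour(R_{C,D})\le 0$ and $|C\setminus D|\ge 3$. Corollary~\ref{cor:no-VH-in-light}, applied to $(V(G)\setminus T,T\cup\bd_GT)$ with $T\cup\bd_GT\subseteq D$, then gives a contradiction. Finally it contracts $R_S$ to a vampire or \Ktwofive{}, $R_T$ to $K_5$, and the paths.

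\textbf{Your route.} You pose the linkage question inside the $5$-rooted graph $R_{B,A}$, which is already $4$-light by Lemma~\ref{lem:one-VH-light}, and apply Corollary~\ref{cor:vatoredu} there once. Its outcome (i) is exactly the linked case: the vampire or \Ktwofive{} is moved onto $\bd_GS$ and glued with the rooted $K_5$ of $R_S$, as in Lemma~\ref{lem:not-both-VH}. Its outcome (ii) is a reducible fragment that lifts to $G$, because it avoids $S\cup\bd_GS$.

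\textbf{Comparison.} Corollary~\ref{cor:no-VH-in-light} is itself proved by applying Corollary~\ref{cor:vatoredu} to a light $(\le\!4)$-rooted side, via Observation~\ref{obs:decrw}, so the underlying machinery is the same. Your version is shorter. It bypasses Corollaries~\ref{cor:VH-fragment-vs-4sep} and~\ref{cor:no-VH-in-light} and the explicit Menger argument. It also needs no hypothesis $\rhoFour\le 0$, because the opposite side has five roots and is $4$-light. The paper's version instead produces an explicit linkage in $G$ between the two boundaries and reuses its general tool for $(\le\!4)$-separations with a light side.
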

\begin{proof}
Suppose for a contradiction that $R_S$ and $R_T$ are both quite heavy.
Consider the $5$-separation $(A,B)=(V(G)\setminus S,S\cup \bd_GS)$ of $G$.
By Lemma~\ref{lem:one-VH-light}, since $R_{A,B}=R_S$ is quite heavy, the $5$-rooted graph $R_{B,A}$ is $4$-light.
Since $(S,T)$ is a bifragment, we have $T\subseteq A\setminus B$, and it follows that the $5$-rooted graph $R_T$ is $4$-light as well.
Symmetrically, the $5$-rooted graph $R_S$ is $4$-light.

We claim that $G$ contains five pairwise vertex-disjoint paths from $S\cup\bd_GS$ to $T\cup\bd_GT$.
Indeed, otherwise by Menger's theorem, there would exist a $(\le\!4)$-separation $(C,D)$ of $G$ such
that $S\cup\bd_GS\subseteq C$ and $T\cup\bd_GT\subseteq D$. 
However, then Corollary~\ref{cor:VH-fragment-vs-4sep} implies that $|C\setminus D|\ge 3$, $\rhoFour(R_{C,D})\le 0$, and $R_{C,D}$ is $4$-light;
and since $T\cup\bd_GT\subseteq D$ and $R_T$ is $4$-light and quite heavy, we get a contradiction by Corollary~\ref{cor:no-VH-in-light}.

Let $\calL$ be a set of five pairwise vertex-disjoint paths from $S\cup\bd_GS$ to $T\cup\bd_GT$ in $G$.
Without loss of generality, we can assume that these paths start in $\bd_GS$ and end in $\bd_GT$.
By Theorem~\ref{thm:VH}, we see that $R_S$
has an $\id$-rooted minor $W$, where $W$ is a vampire or \Ktwofive{}.  
On the other hand, Lemma~\ref{lem:K5-in-light-VH} shows that $R_T$ has $K_5$ as a rooted minor.
By contracting $R_S$ to $W$, $R_T$ to $K_5$ on $\bd_GT$, and contracting the edges of the paths of $\calL$,
we obtain $\target$ as a minor of $G$, which is a contradiction.
\end{proof}

Let us now consolidate several of the results of this section.
\begin{corollary}\label{cor:bifragment-constraint}
Let $G$ be a graph and let $(S,T)$ be a $(\le\!5)$-bifragment in $G$.  Suppose that for each $Z\in \{S,T\}$,
we have $\rhoFour(G,Z)>0$, and moreover if $|\bd_GZ|=5$, then the $5$-rooted graph $R_Z$ is quite heavy.
Then $G$ is not a minimal \denco{}.
\end{corollary}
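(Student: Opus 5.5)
Suppose for a contradiction that $G$ is a minimal \denco{} with such a bifragment $(S,T)$. The proof splits into cases by the orders $|\bd_GS|$ and $|\bd_GT|$, each at most five, and in each case we contradict one of the earlier results.

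\emph{Case 1: both boundaries have size at most four.} Then $(S,T)$ is a dense $(\le\!4)$-bifragment. This contradicts the $4$-bilightness of $G$, which is part of the definition of a \denco{}.

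\emph{Case 2: both boundaries have size five.} Then $(S,T)$ is a $5$-bifragment with both $R_S$ and $R_T$ quite heavy. This is excluded directly by Lemma~\ref{lem:no-nested-VH}.

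\emph{Case 3: exactly one boundary has size five.} By symmetry, $|\bd_GS|=5$, $R_S$ is quite heavy, $|\bd_GT|\le 4$ and $\rhoFour(G,T)>0$. Apply Corollary~\ref{cor:VH-fragment-vs-4sep} with
\[
(C,D)=(V(G)\setminus T,\;T\cup\bd_GT)
\]
and $Y=S$. This is a $(\le\!4)$-separation of $G$, since $|C\cap D|=|\bd_GT|\le 4$.

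The main point to check is the hypothesis $S\cup\bd_GS\subseteq C$. We have $S\cap T=\emptyset$, so $S\subseteq C$. Every vertex of $\bd_GS$ has a neighbour in $S$. Because $(S,T)$ is a bifragment, no vertex of $T$ is adjacent to a vertex of $S$, so $\bd_GS\cap T=\emptyset$. Hence $\bd_GS\subseteq V(G)\setminus T=C$.

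Conclusion (ii) of Corollary~\ref{cor:VH-fragment-vs-4sep} then gives $\rhoFour(R_{C,D})\le 0$. But $R_{C,D}=R_T$, so $\rhoFour(R_{C,D})=\rhoFour(G,T)>0$. This is a contradiction.

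The cases cover all possibilities, so no minimal \denco{} has such a bifragment, and $G$ is not a minimal \denco{}. There is no real obstacle here; the corollary merely packages Lemma~\ref{lem:no-nested-VH}, Corollary~\ref{cor:VH-fragment-vs-4sep} and $4$-bilightness. The only care needed is the containment $\bd_GS\subseteq C$ in Case 3, which the bifragment condition supplies.
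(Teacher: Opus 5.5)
Your proof is correct and is essentially the paper's argument: the same three cases, with $4$-bilightness handling the first and Lemma~\ref{lem:no-nested-VH} the second. In the mixed case you invoke Corollary~\ref{cor:VH-fragment-vs-4sep}(ii), whereas the paper applies Lemma~\ref{lem:one-VH-light} directly to $(A,B)=(V(G)\setminus S,S\cup\bd_GS)$ to see that $R_{B,A}$ is $4$-light. Since part (ii) of that corollary is itself derived from Lemma~\ref{lem:one-VH-light} in exactly this way, this is the same reasoning with one extra layer of packaging.
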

\begin{proof}
Suppose for a contradiction that $G$ is a minimal \denco{}.
Note that $S$ and $T$ cannot both be $(\le\!4)$-fragments, as otherwise $(S,T)$ would be a dense $(\le\!4)$-bifragment.
Hence, we can assume that $S$ is a $5$-fragment (and $R_S$ is quite heavy).
By Lemma~\ref{lem:no-nested-VH}, we see that $T$ is a $(\le\!4)$-fragment.
However, this contradicts Lemma~\ref{lem:one-VH-light} with $(A,B)=(V(G)\setminus S,S\cup\bd_GS)$.
\end{proof}

\section{Density of $\target$-minor-free graphs}
\label{sec:density-final}

Using the results of the previous section, we now argue that removing or contracting an
edge in a minimal \denco{} preserves $4$-bilightness, and thus it has to decrease the number of edges.

\begin{lemma}\label{lemma:one-edge-delete}
If $G$ is a minimal \denco{}, then $|E(G)|=4|V(G)|-7$.
\end{lemma}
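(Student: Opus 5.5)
Suppose for a contradiction that $G$ is a minimal \denco{} with $|E(G)|\ge 4|V(G)|-6$, that is, $\rhoFour(G)\ge -6$. Take any edge $e=uv$ and set $H=G-e$. Then $H$ is $\target$-minor-free, has the same $n\ge 3$ vertices, and satisfies $\rhoFour(H)=\rhoFour(G)-1\ge -7$. It has fewer edges than $G$, so by minimality $H$ is not a \denco{}. The only condition $H$ can violate is $4$-bilightness. Hence $H$ contains a dense $(\le\!4)$-bifragment $(S,T)$, and we aim to contradict Corollary~\ref{cor:bifragment-constraint} (or the $4$-bilightness of $G$).

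\textbf{Case 1: $e$ does not join $S$ to $T$.} Then $(S,T)$ is still a bifragment in $G$. Since $\bd_GZ\supseteq\bd_HZ$ and adding $e$ can only increase $\rho(\cdot,Z)$, each side $Z\in\{S,T\}$ gains at most one boundary vertex and keeps positive $4$-density. Precisely, if $e$ has one end $u\in Z$ and the other end $v\notin Z\cup\bd_HZ$, then $\bd_GZ=\bd_HZ\cup\{v\}$ and $\rhoFour(G,Z)=\rhoFour(H,Z)+1\ge 2$. Otherwise $\bd_GZ=\bd_HZ$ and $\rhoFour(G,Z)\ge\rhoFour(H,Z)>0$. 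If both sides remain $(\le\!4)$-fragments, $(S,T)$ is a dense $(\le\!4)$-bifragment of $G$, contradicting $4$-bilightness. Otherwise some side $Z$ became a $5$-fragment with $\rhoFour(G,Z)\ge 2$, so $R^G_Z$ is quite heavy. Then $(S,T)$ satisfies the hypotheses of Corollary~\ref{cor:bifragment-constraint}, which is again a contradiction.

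\textbf{Case 2: $u\in S$ and $v\in T$.} In $G$, the pair $(S,T)$ is no longer a bifragment. I would replace it by $(S,T\setminus\{v\})$, or symmetrically $(S\setminus\{u\},T)$. Consider $T'=T\setminus\{v\}$. If $T'$ is nonempty, then $\bd_GT'\subseteq\bd_HT\cup\{v\}$, so $|\bd_GT'|\le 5$. Also $\rhoFour(G,T')=\rhoFour(H,T)+4-\deg_{H}(v)+|N_H(v)\cap \bd_H T|$, where the last two terms together subtract the edges from $v$ into $T'$. This may become non-positive, so some care is needed.

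The cleaner route is to choose the dense bifragment $(S,T)$ with $|S|+|T|$ minimal. Then, as in the proof of Lemma~\ref{lemma:simplify-dense-bifragment}, $R^H_S$ and $R^H_T$ are $4$-light. I would instead exploit $S'=S\cup\{v\}$ in $G$. Its boundary satisfies $\bd_GS'\subseteq\bd_HS\cup N_H(v)$, which is too large in general. So the better option is the $5$-fragment view: $\bd_GS=\bd_HS\cup\{v\}$, $\rhoFour(G,S)=\rhoFour(H,S)+1\ge 2$, and similarly $\bd_GT=\bd_HT\cup\{u\}$ with $\rhoFour(G,T)\ge 2$. The pair $(S,T)$ fails to be a bifragment in $G$ only because of $e$.

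To repair this, take $S$ and $T\setminus\{v\}$. Alternatively, note that if $|\bd_HS|\le 3$, then $\bd_GS$ has order at most $4$, and we get a heavy $(\le\!4)$-fragment $S$ together with $T$ as a $5$-fragment containing... This is the main obstacle. I expect the resolution to come from choosing $e$ cleverly, for instance an edge with both ends in a common side, rather than handling an arbitrary edge. If every edge lands in Case 2 for all its dense bifragments, we can instead take an edge inside $G[S]$ for a fixed witness. Since $|S|\ge 2$ and $\rhoFour(H,S)>0$ forces edges inside $S$, deleting such an edge $f$ leads by minimality to another dense bifragment in $G-f$. A minimality or uncrossing argument with the original witness should then fall into Case 1. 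Making this uncrossing rigorous, using Lemmas~\ref{lem:one-VH-light} and~\ref{lem:no-nested-VH} to control how a $5$-fragment interacts with a $(\le\!4)$-fragment, is where I expect the real work to lie.
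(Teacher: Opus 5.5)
Your Case~1 is correct. When $e$ joins $S$ to a vertex outside $S\cup\bd_HS\cup T$, applying Corollary~\ref{cor:bifragment-constraint} directly to $(S,T)$ is even slightly quicker than the paper's uniform treatment.

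The gap is Case~2, $u\in S$ and $v\in T$. This is where the whole content of the lemma lies, and you leave it open; the fallback of choosing $e$ cleverly and uncrossing is not an argument. Your obstacle there comes from a bookkeeping error. The quantity $\rho(G,T\setminus\{v\})$ still counts the edges from $v$ into $T\setminus\{v\}$. The only edges lost are those from $v$ to $\bd_HT$ and the edge $e$, so $\rhoFour(G,T\setminus\{v\})=\rhoFour(H,T)+4-|N_H(v)\cap\bd_HT|\ge 1$. Moreover, the boundary stays inside $\bd_HT\cup\{v\}$, and $(S,T\setminus\{v\})$ is a bifragment of $G$.

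The paper makes the symmetric move. It labels the sides so that $|\bd_GS|\ge|\bd_GT|$ and takes $S'=S\setminus\{u\}$, which is nonempty since $\rhoFour(G,S)\ge 2$. It then gets $\rhoFour(G,S')=\rhoFour(G,S)+4-a\ge 1$, where $a\le 5$ is the number of edges from $u$ to $\bd_GS$. If $|\bd_GS|\le 4$, then $(S',T)$ is a dense $(\le\!4)$-bifragment of $G$. Otherwise, Corollary~\ref{cor:bifragment-constraint} applied to $(S',T)$ leaves only one case: $S'$ is a $5$-fragment whose right-hand side is not quite heavy.

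That extremal case is the second idea you are missing. In it, $\rhoFour(G,S')=1$ and some $u'\in S'$ is adjacent to all of $\bd_GS'=\bd_HS\cup\{u\}$. Tightness of the estimate forces $a=5$, so $u$ is adjacent to all of $\bd_GS=\bd_HS\cup\{v\}$, and it forces $\rhoFour(G,S)=2$. Lemma~\ref{lem:not-both-VH} says the quite heavy side has $4$-density at least $m+2$, so $m=0$ and $\bd_GS$ is a clique. Hence $\bd_HS\cup\{u,u',v\}$ induces a supergraph of $K_7^-\supseteq\target$, a contradiction. The corollaries on bifragments alone do not finish Case~2; you need this use of the $m+2$ bound and the explicit $K_7^-$.
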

\begin{proof}
Suppose for a contradiction that $|E(G)|\ge 4|V(G)|-6$.  Since $|V(G)|\ge 3$, this gives $|E(G)|\ge 6$,
and in particular $E(G)\neq\emptyset$.  Let $e$ be any edge of $G$, and consider the graph $H=G-e$.
We have $|V(H)|=|V(G)|\ge 3$ and $|E(H)|=|E(G)|-1\ge 4|V(H)|-7$.  Moreover, $H$ is a minor of $G$,
and thus it is $\target$-minor-free.  The minimality of $G$ implies that $H$ is not a \denco{}, and thus
$H$ is not 4-bilight.

Let $(S,T)$ be a dense $(\le\!4)$-bifragment in $H$.  If $e=uv$ had both ends in $S\cup\bd_HS$, or both ends in $T\cup\bd_HT$,
or neither end in $S\cup T$, then $(S,T)$ would also be a dense $(\le\!4)$-bifragment in $G$,
a contradiction.  Hence, we can by symmetry assume that $u\in S$ and $v\not\in S\cup\bd_HS$.
Moreover, in case that $v\in T$, we also by symmetry assume that $|\bd_GS|\ge|\bd_GT|$.

Note that $|\bd_GS|=|\bd_HS|+1\le 5$.  The rooted graph $R^G_S$ is obtained from $R^H_S$ by adding the root $v$ and
the edge $e$, and thus $\rhoFour(G,S)=\rhoFour(H,S)+1\ge 2$.  Analogously, we also have $\rhoFour(G,T)\ge 2$ if $v\in T$
(and $\rhoFour(G,T)=\rhoFour(H,T)>0$ if $v\not\in T$).

Since $\rhoFour(G,S)\ge 2$, we have $|S|\ge 2$.  Let $S'=S\setminus\{u\}$.  Since $v\not\in S\cup\bd_HS$,
we have $\bd_GS'\subseteq(\bd_GS\setminus\{v\})\cup \{u\}$, and thus $|\bd_GS'|\le|\bd_GS|$.
Moreover, let $a\le 5$ denote the number of edges of $G$ between $u$ and $\bd_GS$ (including the edge $e$),
and observe that
\begin{equation}\label{eq:excle}
\rhoFour(G,S')=\rhoFour(G,S)+4-a\ge 1.
\end{equation}
If $|\bd_GS|\le 4$, then also $|\bd_G T|\le 4$ (if $v\in T$ then recall that we chose the labels of $S$ and $T$ so that $|\bd_GS|\ge|\bd_GT|$, and otherwise
$|\bd_G T|=|\bd_H T|$).  However, then $(S',T)$ would be a dense $(\le\!4)$-bifragment in $G$, a contradiction.
Therefore, $S$ is a 5-fragment in $G$.

Recall that $\rhoFour(G,T)\ge 1$, and that if $|\bd_GT|=5$,
then $v\in T$ and $\rhoFour(G,T)\ge 2$ (and thus $R^G_T$ is quite heavy).  Since $\rhoFour(G,S')\ge 1$, Corollary~\ref{cor:bifragment-constraint}
applied to the bifragment $(S',T)$ implies that $|\bd_GS'|=5$ and the 5-rooted graph $R^G_{S'}$ is not quite heavy.  Consequently,
we have $\rhoFour(G,S')=1$ and there exists a vertex $u'\in S'$ adjacent to all vertices in
$\bd_GS'=\bd_HS\cup\{u\}$.  Moreover, \eqref{eq:excle} implies that $a=5$ (that is, $u$ is adjacent to all vertices in $\bd_GS=\bd_HS\cup\{v\}$) and $\rhoFour(G,S)=2$.
By Lemma~\ref{lem:not-both-VH}, $\rhoFour(G,S)=2$ is only possible when $\bd_GS=\bd_HS\cup \{v\}$ induces a clique in $G$.
Therefore, the subgraph of $G$ induced by $\bd_HS\cup\{u,u',v\}$ contains $K_7^-$, which is a contradiction since $G$ is $\target$-minor-free.
\end{proof}

\noindent Next, we consider the effect of an edge contraction.
\begin{lemma}\label{lem:one-edge-contract}
If $G$ is a minimal \denco{}, then $|V(G)|\ge 7$ and every edge of $G$ is contained in at least four triangles.
\end{lemma}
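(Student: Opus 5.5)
The plan mirrors Lemma~\ref{lemma:one-edge-delete}, with contraction in place of deletion.

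\textbf{Small graphs.} First dispose of $|V(G)|\le 6$. By Lemma~\ref{lemma:one-edge-delete}, $|E(G)|=4|V(G)|-7$. For $n\le 5$ this exceeds $\binom{n}{2}$, which is impossible. For $n=6$ it gives $17>15$, also impossible. Hence $|V(G)|\ge 7$.

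\textbf{Setting up the contraction.} Let $e=uv$ be an edge and $t$ the number of triangles containing it. Suppose for a contradiction that $t\le 3$, and let $H=G/e$, with $w$ the new vertex. Then $|V(H)|=|V(G)|-1\ge 6\ge 3$, and $H$ is $\target$-minor-free. Moreover
\[
|E(H)|=|E(G)|-1-t\ge 4|V(G)|-11=4|V(H)|-7 .
\]
By minimality, $H$ is therefore not $4$-bilight. Apply Lemma~\ref{lemma:simplify-dense-bifragment} to $H$ with $U=\{w\}$, which is trivially a clique. This yields a dense $(\le\!4)$-bifragment $(S,T)$ in $H$ such that either $w\notin S\cup T$, or $w\in S$, $T\cap\{w\}=\emptyset$, and $R^H_S$ is $4$-light.

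\textbf{First case: $w\notin S\cup T$.} If $w\notin\bd_HS\cup\bd_HT$, then $(S,T)$ is already dense in $G$, a contradiction. Otherwise uncontract: replace $w$ by $\{u,v\}$. A boundary containing $w$ then grows by at most one, giving a $(\le\!5)$-bifragment in $G$. The boundary grows only when both $u$ and $v$ have neighbours in $S$, and in that case the $4$-density on that side rises by the number of edges from $S$ to the extra vertex, so it stays positive.

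\textbf{Second case: $w\in S$.} Set $S'=(S\setminus\{w\})\cup\{u,v\}$. Then $\bd_GS'=\bd_HS$, and
\[
\rhoFour(G,S')=\rhoFour(H,S)+1+t-4=\rhoFour(H,S)+t-3 .
\]
If $t=3$, this gives the dense $(\le\!4)$-bifragment $(S',T)$ in $G$, a contradiction. If $t\le 2$, the density can drop to $\le 0$. Since $R^H_S$ is $4$-light with positive density, Theorem~\ref{thm:D-le4} and Lemma~\ref{lem:dartexists} show that $R^H_S$ contains a clique or a dart as a rooted minor. Hence so does $R^G_{S'}$, using $|S'|\ge 2$. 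Also $|V(G)\setminus S'|\ge |T\cup\bd T|\ge 3$. So $S'$ is a reducible $(\le\!4)$-fragment, contradicting Lemma~\ref{lem:den-no-reducible}.

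\textbf{Main obstacle: the first case after uncontracting.} We obtain a $(\le\!5)$-bifragment in which one side may be a $5$-fragment. If both sides are still $(\le\!4)$-fragments, $4$-bilightness is contradicted. Otherwise, say $S$ has a $5$-boundary $\bd_HS\setminus\{w\}\cup\{u,v\}$. To apply Corollary~\ref{cor:bifragment-constraint}, I need $R^G_S$ to be quite heavy.
\begin{itemize}
\item If $\rhoFour(G,S)\ge 2$, this holds.
\item If $\rhoFour(G,S)=1$, then $\rhoFour(H,S)=1$ and no edges were added. The remaining worry is a vertex of $S$ adjacent to all five boundary vertices, including both $u$ and $v$. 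That vertex would be a common neighbour of $u$ and $v$, and it produces $K_7^-$-type structure around $\{u,v\}$.
\end{itemize}
I would handle this subcase as in the end of Lemma~\ref{lemma:one-edge-delete}: either invoke Lemma~\ref{lem:not-both-VH} to force the boundary to be nearly a clique and find $\target$, or shrink $S$ by removing that vertex and reapply Corollary~\ref{cor:bifragment-constraint}. If both sides have $5$-boundaries, Lemma~\ref{lem:no-nested-VH} forbids it once quite-heaviness is established. This bookkeeping is where I expect most of the work to lie.
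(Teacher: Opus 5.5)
Your overall plan is the paper's. The paper picks $(S,T)$ minimizing $|S|+|T|$ rather than invoking Lemma~\ref{lemma:simplify-dense-bifragment} with $U=\{w\}$, but the two are equivalent. Your bound $|V(G)|\ge 7$ and the case $w\in S$ are essentially fine. One small fix in the case $w\in S$: split on the sign of $\rhoFour(G,S')$, not on $t$. If $\rhoFour(G,S')>0$, which can also happen when $t\le 2$, then $(S',T)$ is a dense $(\le\!4)$-bifragment of $G$. Only when $\rhoFour(G,S')\le 0$ is $S'$ reducible.

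The genuine gap is the case $w\notin S\cup T$. You leave the subcase $\rhoFour(G,S)=1$ with $|\bd_GS|=5$ open and offer two unverified fallbacks. The cause is your density bookkeeping. When $w\in\bd_HS$ and you pass from $H$ back to $G$, $\rhoFour(\cdot,S)$ does not rise by ``the number of edges from $S$ to the extra vertex''. A vertex of $S$ adjacent to exactly one of $u,v$ contributes one edge in both graphs. A vertex adjacent to both contributes two edges in $G$ but only one edge (to $w$) in $H$. Hence $\rhoFour(G,S)=\rhoFour(H,S)+q$, where $q$ is the number of common neighbours of $u$ and $v$ in $S$.

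With this, your worrying subcase is empty. If $|\bd_GS|=5$, then $\bd_GS=(\bd_HS\setminus\{w\})\cup\{u,v\}$. If moreover $\rhoFour(G,S)=1$, then $\rhoFour(H,S)\ge 1$ forces $q=0$, so no vertex of $S$ is adjacent to both $u$ and $v$, let alone to all five roots. You state both halves of this (``no edges were added'' and ``that vertex would be a common neighbour of $u$ and $v$'') without noticing that they contradict each other. So $R^G_Z$ is quite heavy for each $Z\in\{S,T\}$ with $|\bd_GZ|=5$. Corollary~\ref{cor:bifragment-constraint} applied to the $(\le\!5)$-bifragment $(S,T)$ then finishes the proof immediately. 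It already covers the case where both sides are $5$-fragments, so you need no separate appeal to Lemma~\ref{lem:no-nested-VH}.

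Your first fallback would not transfer as described. In the hypothetical situation, $R^G_S$ is not quite heavy, so Lemma~\ref{lem:not-both-VH} only bounds the density of the opposite side and does not force $\bd_GS$ to be (nearly) a clique. This differs from the end of Lemma~\ref{lemma:one-edge-delete}, where $\rhoFour(G,S)=2$ with $R^G_S$ quite heavy is what forces $m=0$.
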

\begin{proof}
Since $\binom{|V(G)|}{2}\ge |E(G)|=4|V(G)|-7$ and $|V(G)|\ge 3$, we have $|V(G)|\ge 7$.
Suppose for a contradiction that an edge $e=uv\in E(G)$ is contained in $t\le 3$ triangles,
and thus the graph $H=G/e$ satisfies
$$\rhoFour(H)=\rhoFour(G)+4-(t+1)\ge\rhoFour(G)=-7.$$
Moreover, $H$ is a minor of $G$, and thus it is $\target$-minor-free, and $|V(H)|=|V(G)|-1>3$.
The minimality of $G$ implies that $H$ is not a counterexample, and thus $H$ is not $4$-bilight.
Let $(S,T)$ be a dense $(\le\!4)$-bifragment in $H$, chosen so that $|S|+|T|$ is minimal.
Moreover, let $z$ denote the vertex of $H$ resulting from the contraction of the edge $e$.

Suppose first that $z\in S$.  By the minimality of $|S|+|T|$, we see that the rooted graph $R^H_S$
is $4$-light, as otherwise we could replace $S$ by a smaller $(\le\!4)$-fragment.
By Theorem~\ref{thm:D-le4} and Lemma~\ref{lem:dartexists}, the $(\le\!4)$-rooted graph
$R^H_S$ contains either $K_{|\bd_HS|}$ or a dart as a rooted minor.
Let $S'=(S\setminus\{z\})\cup\{u,v\}$.  Since $R^H_S$ is obtained from $R^G_{S'}$ by contracting the edge $e$,
the $(\le\!4)$-rooted graph $R^G_{S'}$ contains $K_{|\bd_GS'|}$ or a dart as a rooted minor as well.
Moreover, we clearly have $|S'|\ge 2$, and $|V(G)\setminus S'|\ge |T\cup\bd_HT|\ge 6$ since $\rhoFour(H,T)>0$.
Finally, we have $\rhoFour(G,S')\le 0$, as otherwise $(S',T)$ would be a dense $(\le\!4)$-bifragment in $G$.
However, then $S'$ is a reducible $(\le\!4)$-fragment in $G$, contradicting Lemma~\ref{lem:den-no-reducible}.

Therefore, we have $z\not\in S$, and symmetrically $z\not\in T$.
Consider any $Z\in \{S,T\}$.
Note that $Z$ is a $(\le\!5)$-fragment in $G$ and that $\rhoFour(G,Z)\ge \rhoFour(H,Z)>0$.
Moreover, in case that $Z$ is a $5$-fragment, we have $z\in \bd_HZ$, and
$\rhoFour(G,Z)=\rhoFour(H,Z)+q$, where $q$ is the number of common neighbors of $u$ and $v$ in $Z$.
It follows that either $\rhoFour(G,Z)\ge 2$, or $\rhoFour(G,Z)=1$ and $u$ and $v$ have no common neighbor in $Z$,
and in either case the $5$-rooted graph $R^G_Z$ is quite heavy.
However, then the $(\le\!5)$-bifragment $(S,T)$ in $G$ contradicts Corollary~\ref{cor:bifragment-constraint}.
\end{proof}

In particular, let us note the following simple consequence.
\begin{corollary}\label{cor:delta}
If $G$ is a minimal \denco{}, then $G$ has minimum degree at least five.
\end{corollary}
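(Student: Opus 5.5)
This follows quickly from Lemma~\ref{lem:one-edge-contract}. Let $G$ be a minimal \denco{}. By Lemma~\ref{lem:one-edge-contract}, $|V(G)|\ge 7$ and every edge of $G$ lies in at least four triangles. We must show that no vertex has degree at most four, and we split according to whether $v$ is isolated or not.

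First suppose a vertex $v$ has a neighbor $u$. The edge $uv$ lies in at least four triangles, so $u$ and $v$ have at least four common neighbors. These common neighbors, together with $u$, are five distinct neighbors of $v$. Hence $\deg_G v\ge 5$.

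It remains to exclude isolated vertices. Suppose $v$ is isolated, and let $H=G-v$. Then $|V(H)|=|V(G)|-1\ge 6\ge 3$ and
$$|E(H)|=|E(G)|=4|V(G)|-7>4|V(H)|-7.$$
Since $H$ is a minor of $G$, it is $\target$-minor-free. To see that $H$ is $4$-bilight, take any $(\le\!4)$-bifragment $(S,T)$ of $H$. It is also a bifragment of $G$, with the same boundaries and the same values $\rhoFour(\cdot,S)$ and $\rhoFour(\cdot,T)$, because $v$ has no neighbors. As $G$ is $4$-bilight, $(S,T)$ is not dense.

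Thus $H$ is a \denco{} with fewer vertices than $G$, contradicting the minimality of $G$. Alternatively, one can argue that $\{v\}$ is a $0$-fragment with $\rhoFour(G,\{v\})=-4\le 0$, that $R_{\{v\}}$ trivially contains $K_0$ as a rooted minor, and that $|V(G)\setminus\{v\}|\ge 3$; so $\{v\}$ is reducible, contradicting Lemma~\ref{lem:den-no-reducible}.

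There is no real obstacle here. The only point needing care is the isolated-vertex case, since the triangle argument says nothing when $v$ has no incident edge.
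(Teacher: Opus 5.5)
Your proof is correct and follows the paper's: rule out isolated vertices because $G-v$ would be a smaller counterexample, then get $\deg v\ge 5$ from the four triangles on any incident edge given by Lemma~\ref{lem:one-edge-contract}. You also check that $G-v$ is $4$-bilight and $\target$-minor-free, which the paper leaves implicit, and your alternative via the reducible $0$-fragment $\{v\}$ is also valid.
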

\begin{proof}
Consider any vertex $v\in V(G)$.  Note that $v$ is not isolated, since otherwise $G-v$ would
be a counterexample.  Hence $v$ is incident with an edge, and $\deg v\ge 5$ since this edge is contained in at least
four triangles.
\end{proof}

Next, let us show a useful lemma concerning neighborhoods of vertices of small degree.

\begin{lemma}\label{lem:lot-on-five}
Let $G$ be a minimal \denco{}, let $v$ be a vertex of $G$ of degree at most seven,
and let $H$ be the subgraph of $G$ induced by the neighbors of $v$.
For every set $S\subseteq V(H)$ of size five, the $5$-rooted graph $\roots{H}{S}$ contains
$K_5^-$ as a rooted minor.
\end{lemma}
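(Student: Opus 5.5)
The plan is to use Lemma~\ref{lem:one-edge-contract}: every edge of $G$ lies in at least four triangles. Hence every vertex $u\in V(H)$ has $\deg_H u=t_G(uv)\ge 4$. Since $|V(H)|=\deg v\le 7$, the graph $H$ has between $5$ and $7$ vertices and minimum degree at least four. We also know that $H$ is $K_6$-minor-free: otherwise adding $v$ would give a $K_7$ minor, and in particular $\target$. This suggests doing a short case analysis on $|V(H)|$ and on the position of $S$, producing in each case a model of $K_5^-$ rooted at $S$.

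\emph{Case $|V(H)|=5$.} Then $H=K_5$, since the minimum degree is $4$, and $S=V(H)$, so we are done. \emph{Case $|V(H)|=6$.} Let $U=V(H)\setminus S=\{w\}$. The vertex $w$ has at least four neighbours in $S$, and $H[S]$ has minimum degree at least $3$, because each vertex of $S$ loses at most one neighbour (namely $w$). So $H[S]$ misses a matching of size at most two (it is $K_5$ minus a matching, since complements have maximum degree at most $1$). If $H[S]$ misses at most one edge we are done. If it misses two edges $ab$ and $cd$, then we contract $w$ into a neighbour in $S$ chosen so as to restore one of the two missing edges. Since $w$ has at least four neighbours in $S$, $w$ is adjacent to at least one endpoint of each missing pair. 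Say $w$ is adjacent to $a$ and $b$: contracting $wa$ restores $ab$. Otherwise, $w$ is adjacent to exactly one endpoint of one pair and to both endpoints of the other, and we contract into the appropriate endpoint of the pair it fully covers. Either way we get $K_5^-$.

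\emph{Case $|V(H)|=7$.} Now $U=\{w_1,w_2\}$. Each vertex of $S$ has at least two neighbours in $S$, since it loses at most two to $U$. Each $w_i$ has at least three neighbours in $S$. The idea is to contract $w_1$ and $w_2$ (possibly together, if $w_1w_2\in E(H)$) into suitable vertices of $S$ to add missing edges of $H[S]$. Counting gives $\sum_{s\in S}\deg_H s\ge 20$, and the edges from $U$ to $S$ number at most $10$, so $|E(H[S])|\ge 5$. Here the argument must use more than degree counting: we also use that $H$ is $K_6$-minor-free and that the edges $w_iv$ and $sv$ lie in at least four triangles. I would organise this case by the structure of the complement $\overline{H[S]}$, which has at most $5$ edges and maximum degree at most $2$, so it is a union of paths and cycles. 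For each configuration I would choose which vertex of $S$ absorbs $w_1$ and which absorbs $w_2$ (or $w_1w_2$ jointly) so that all but one of the missing edges are covered. A helpful reduction: a single vertex $w_i$ with at least four neighbours in $S$, contracted onto a vertex $s\in S$ that has at most one non-neighbour left, already does a lot of the work. Some configurations may need the rooted-minor tools, namely Lemma~\ref{lem:vampnbhd}-style arguments or Theorem~\ref{thm:D-target} applied to $\roots{H}{S}$.

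I expect the $|V(H)|=7$ case to be the main obstacle. In particular, when $\overline{H[S]}$ is a $5$-cycle (so $H[S]$ is also a $5$-cycle) and $w_1,w_2$ are non-adjacent with exactly three neighbours each in $S$, degree counting alone may not suffice. There I would first try to show that this configuration contradicts the four-triangles condition (Lemma~\ref{lem:one-edge-contract}) applied to edges inside $H$. If that fails, I would show that it forces a $\target$ minor in $G[V(H)\cup\{v\}]$, contradicting $\target$-minor-freeness.
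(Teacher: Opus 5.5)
Your cases $|V(H)|\in\{5,6\}$ are fine, and the overall plan (split on $|V(H)|$, then on the structure of the complement of $H[S]$) is the right one. The gap is the case $|V(H)|=7$. This case is the actual content of the lemma, and you leave it as a plan.

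The configuration you single out as the main obstacle cannot occur. If $w_1w_2\notin E(H)$ and $w_i$ has only three neighbours in $S$, then $\deg_H w_i=3$, contradicting the minimum degree four you derived from Lemma~\ref{lem:one-edge-contract}.

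The missing idea is to apply the degree bound to all of $H$, not just inside $S$. Since $|V(H)|=7$ and $H$ has minimum degree at least four, $\overline{H}$ has maximum degree at most two. So a vertex of $S$ with two non-neighbours in $S$ is adjacent to both $w_1$ and $w_2$. A vertex with one non-neighbour in $S$ is adjacent to at least one of them. In particular, if $\overline{H}[S]$ is a $5$-cycle, then $w_1$ and $w_2$ are both complete to $S$.

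With this coupling, a direct case analysis on $\overline{H}[S]$ finishes the proof using only explicit contractions.
\begin{itemize}
\item If $\overline{H}[S]$ is a $5$-cycle $v_1\ldots v_5$, or contains a cycle $v_1\ldots v_k$ with $k\in\{3,4\}$, contract $v_1w_1$ and $v_3w_2$.
\item Otherwise $\overline{H}[S]$ is a forest. If it is a path $v_1\ldots v_5$, label so that $v_1w_1\in E(H)$ and contract the same two edges.
\item If it has at most three edges and contains a path $v_1v_2v_3$, contract both $w_1$ and $w_2$ into $v_2$.
\item Finally, it has maximum degree at most one, and at most one non-edge is trivial. So suppose it is a matching $\{v_1v_2,v_3v_4\}$. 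Either some $w_j$ is adjacent to both ends of one of these non-edges, and you contract it into one end. Or the non-adjacencies between each pair and $\{w_1,w_2\}$ form perfect matchings. Then each $w_j$ already has two non-neighbours, which forces $w_1w_2\in E(H)$. Contracting $v_1w_1$ and $v_2w_2$ (labelled so that these are edges) restores $v_1v_2$.
\end{itemize}

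This is how the paper argues, and $K_6$-minor-freeness is never used. The heavier fallbacks you mention would not do the job anyway. Lemma~\ref{lem:vampnbhd} only produces three edges at a single vertex. Also, $\rhoFour(\roots{H}{S})$ can be as low as $-1$ (e.g.\ in the last case above), where Theorem~\ref{thm:D-target} yields nothing.
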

\begin{proof}
Lemma~\ref{lem:one-edge-contract} implies that $H$ has minimum degree at least four.
If $|V(H)|=5$, then $H$ is isomorphic to $K_5$, and thus the claim holds trivially.

If $|V(H)|=6$, then the complement of $H$ has maximum degree at most one.
Hence, $H[S]$ has at most two non-edges.  If $H[S]$ is a clique, then the claim is again
trivial. Otherwise, let $uv$ be a non-edge of $H[S]$, and let $z$ be the unique vertex
in $V(H)\setminus S$.  Since the complement of $H$ has maximum degree at most one,
$z$ is adjacent in $H$ both to $u$ and to $v$.  Hence, by contracting the edge $uz$
we obtain $K_5^-$ as a rooted minor of $\roots{H}{S}$.

Finally, suppose that $|V(H)|=7$.  In this case the complement $\overline{H}$ of $H$
has maximum degree at most two.  In particular, $|E(\overline{H}[S])|\le 5$.
Let $z_1$ and $z_2$ be the two vertices in $V(H)\setminus S$.  Note that if a vertex has degree two in $\overline{H}[S]$,
then it is adjacent to both $z_1$ and $z_2$ in $H$.  Let us distinguish several cases:
\begin{itemize}
\item If $|E(\overline{H}[S])|=5$, then $\overline{H}[S]$ is a $5$-cycle $v_1\ldots v_5$.
We obtain $K_5^-$ as a rooted minor of $\roots{H}{S}$ by contracting the edges $v_1z_1$ and $v_3z_2$.
Hence, we can assume that $|E(\overline{H}[S])|\le 4$.
\item If $\overline{H}[S]$ contains a cycle $v_1\ldots v_k$ (where $k\in \{3,4\}$),
then we again obtain $K_5^-$ as a rooted minor of $\roots{H}{S}$ by contracting the edges $v_1z_1$ and $v_3z_2$.
Therefore, we can assume that $\overline{H}[S]$ is a forest.
\item Hence, if $|E(\overline{H}[S])|=4$, then $\overline{H}[S]$ is a path $v_1\ldots v_5$.
Note that $v_1$ and $v_5$ each have at most one neighbor in $\{z_1,z_2\}$ in $\overline{H}$.
By symmetry, we can assume that $v_1z_1\in E(H)$.  
We obtain $K_5^-$ as a rooted minor of $\roots{H}{S}$ by contracting the edges $v_1z_1$ and $v_3z_2$.
Hence, we can assume that $|E(\overline{H}[S])|\le 3$.
\item If $\overline{H}[S]$ contains a path $v_1v_2v_3$, then note that $v_1$ and $v_3$ each have at most one neighbor
in $\{z_1,z_2\}$ in $\overline{H}$.  Hence, we can obtain $K_5^-$ as a rooted minor of $\roots{H}{S}$
by contracting the edges $v_2z_1$ and $v_2z_2$.  Therefore, we can assume that $\overline{H}[S]$
has maximum degree at most one.
\item If $\overline{H}[S]$ has at most one edge, then the claim is trivial.
Hence, suppose that $\overline{H}[S]$ has exactly two edges $v_1v_2$ and $v_3v_4$.
If there exists $i\in \{1,3\}$ and $j\in\{1,2\}$ such that $v_iz_j,v_{i+1}z_j\in E(H)$,
then we obtain $K_5^-$ as a rooted minor of $\roots{H}{S}$ by contracting the edge $v_iz_j$.

Otherwise, since $\overline{H}$ has maximum degree at most two, it follows that
$\overline{H}$ contains a matching between $\{v_1,v_2\}$ and $\{z_1,z_2\}$
and between $\{v_3,v_4\}$ and $\{z_1,z_2\}$.  Moreover, $\Delta(\overline{H})\le 2$ also implies $z_1z_2\in E(H)$.
By symmetry, we can assume that $v_1z_1,v_2z_2\in E(H)$, and
we obtain $K_5^-$ as a rooted minor of $\roots{H}{S}$ by contracting these two edges.
\end{itemize}
\end{proof}

\noindent By further contracting the edges in $K_5^-$, we have the following consequence.
\begin{corollary}\label{cor:lot-on-lefour}
Let $G$ be a minimal \denco{}, let $v$ be a vertex of $G$ of degree at most seven,
and let $H$ be the subgraph of $G$ induced by the neighbors of $v$.
For every set $S\subseteq V(H)$ of size at most four, the $|S|$-rooted graph $\roots{H}{S}$ contains
$K_{|S|}$ as a rooted minor.
\end{corollary}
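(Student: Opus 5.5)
Our plan is to derive the statement directly from Lemma~\ref{lem:lot-on-five}. Contracting is fine because the definition of a rooted minor lets us contract edges with at least one non-root end, and non-root vertices may also be deleted. The basic step is this: from a rooted $K_5^-$ on five roots we delete one root to obtain a rooted $K_4$ on the remaining four. We choose the deleted root to be an end of the missing edge, so the surviving four roots span a clique.

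First we check that $H$ has at least four vertices, so that supersets of $S$ exist. By Corollary~\ref{cor:delta}, $\deg v\ge 5$, so $|V(H)|\ge 5$. Given $S\subseteq V(H)$ with $|S|\le 4$, we extend $S$ to a set $S^+\subseteq V(H)$ of size five. Lemma~\ref{lem:lot-on-five} then gives an $\id$-rooted model $\mu$ of $K_5^-$ in $\roots{H}{S^+}$, where the missing edge is between some pair $a,b\in S^+$.

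Next we pick a four-element set $S'$ with $S\subseteq S'\subseteq S^+$ that omits at least one of $a,b$. This is possible because $|S^+\setminus S|\ge 1$: we choose the omitted vertex $w\in S^+\setminus S$, taking $w\in\{a,b\}$ whenever $\{a,b\}\not\subseteq S$. The restriction of $\mu$ to $S'$ then models a complete graph on $S'$, so $\roots{H}{S'}$ contains $K_4$ as an $\id$-rooted minor.

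There is one bad case: $\{a,b\}\subseteq S$ with $|S|=4$, so the missing edge lies inside $S$. We avoid it by changing the choice of extension $w$. Here $S^+=S\cup\{w\}$ for an arbitrary $w\in V(H)\setminus S$, and in the bad case the pair $a,b$ is nonadjacent in the minor. Instead, we merge the bag of $w$ into the bag of $a$ or of $b$. This works provided the bag $\mu(w)$ is adjacent to every other bag, which holds because $w$ is adjacent to all of $a,b$ and the other roots in $K_5^-$. Then $\mu(a)\cup\mu(w)$ is connected and adjacent to $\mu(b)$ and to the remaining bags, and it still contains the root $a$. The result is a rooted $K_4$ on $S$, which is legitimate because $w$ is not a root of $\roots{H}{S}$.

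Finally, for $|S|<4$ we apply the $|S|=4$ case to any four-element superset of $S$ and delete the extra roots, since subsets of a clique are cliques. We expect the only real subtlety to be the bookkeeping in the bad case just described, which is the "further contracting" that the remark before the statement alludes to.
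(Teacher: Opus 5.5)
Your proposal is correct and is essentially the paper's argument, which the paper gives in one line (``by further contracting the edges in $K_5^-$''). You extend $S$ to five vertices of $H$ using minimum degree at least five, apply Lemma~\ref{lem:lot-on-five}, and then either drop the extra vertex or merge it into an end of the missing edge, so the case analysis you spell out is exactly the detail the paper leaves implicit.
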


Moreover, Lemma~\ref{lem:lot-on-five} implies that small degree vertices must be separated from the
rest of the minimal \denco{} by $(\le\!4)$-cuts.
\begin{lemma}\label{lem:sepby4}
Let $G$ be a minimal \denco{}, let $v$ be a vertex of $G$ of degree at most seven, let $M$ be the
set of its neighbors, and let $C_1$, \ldots, $C_k$ be the vertex sets of the components of $G-(M\cup\{v\})$.
Then $|\bd_GC_i|\le 4$ and $\rhoFour(G,C_i)\le 0$ holds for each $i\in\{1,\ldots,k\}$.
\end{lemma}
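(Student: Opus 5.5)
Fix a component $C=C_i$ and set $N=\bd_GC\subseteq M$. The plan is to first show $|N|\le 5$, and then to rule out the cases $|N|=5$ and ($|N|\le 4$ with $\rhoFour(G,C)>0$) separately, each time by producing a $\target$ minor or a reducible fragment.

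\textbf{Bounding $|N|$.} Suppose $|N|\ge 6$.
\begin{itemize}
\item Choose $Z\subseteq N$ with $|Z|=5$. By Lemma~\ref{lem:lot-on-five}, $\roots{H}{Z}$ contains $K_5^-$ as a rooted minor, where $H=G[M]$.
\item Contract $C$ together with one further vertex of $N\setminus Z$ into a single vertex $c$. Since every vertex of $N$ has a neighbour in $C$, this $c$ is adjacent to all of $Z$.
\item The vertex $v$ is also adjacent to all of $Z$, and $v$ is non-adjacent to $c$.
\end{itemize}
The result is $K_5^-$ on $Z$ together with two vertices $v,c$ complete to $Z$. This graph is $K_7$ minus one edge of $Z$ and the edge $vc$, i.e.\ two independent edges, so it is $\target$. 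Hence $|N|\le 5$.

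\textbf{The case $|N|=5$.} Here I would use the same construction with $c$ obtained by contracting $C$ alone. Since $c$ and $v$ are both complete to $N$, this already gives $K_5^-$ on $N$ plus $v,c$, which is again $\target$. So in fact $|N|\le 4$ directly, without any density considerations.

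\textbf{The case $|N|\le 4$: showing $\rhoFour(G,C)\le 0$.}
\begin{itemize}
\item Suppose instead $\rhoFour(G,C)>0$. Consider the fragment $T=V(G)\setminus(C\cup N)$, which contains $v$ and $M\setminus N$.
\item By Corollary~\ref{cor:lot-on-lefour}, $G[M]$ contracts to $K_{|N|}$ rooted at $N$. So $R_T$, whose boundary is contained in $N$, contains $K_{|\bd_GT|}$ as a rooted minor (after possibly shrinking the roots to $\bd_G T$).
\item If $\rhoFour(G,T)\le 0$, then $T$ is reducible. Indeed, $|T|\ge 1$; in the dart case we need $|T|\ge2$, but the clique outcome (iii)(a) has no size restriction. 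Also $|V(G)\setminus T|\ge |C\cup N|\ge 3$, since $\rhoFour(G,C)>0$ forces $|C|\ge 2$. This contradicts Lemma~\ref{lem:den-no-reducible}.
\item Otherwise $\rhoFour(G,T)>0$, and $(C,T)$ is a dense $(\le\!4)$-bifragment, contradicting $4$-bilightness.
\end{itemize}

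\textbf{Main obstacle.} The delicate step is checking that $\bd_GT\subseteq N$, i.e.\ that $T$ has no neighbours outside $N$. This holds because $T$ consists of $v$, $M\setminus N$ and the other components $C_j$, all of whose neighbours lie in $M$, and vertices of $M\setminus N$ have no neighbours in $C$.

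A second point to verify is that the reducent condition (ii) holds. This needs $|V(G)\setminus T|\ge3$, which follows from $|N|+|C|\ge 3$.
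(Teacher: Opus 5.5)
Your proof is correct in substance and follows the paper's argument. A boundary of size at least five gives $K_5^-$ on five boundary vertices (Lemma~\ref{lem:lot-on-five}), plus $v$ and the contracted component, hence $K_7^{=}$. For $|N|\le 4$ with $\rho_4(G,C)>0$, $4$-bilightness forces $\rho_4(G,T)\le 0$ for $T=V(G)\setminus(C\cup N)$. Then $T$ is reducible via Corollary~\ref{cor:lot-on-lefour}, contradicting Lemma~\ref{lem:den-no-reducible}.

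One step needs correcting. In your $|N|\ge 6$ case, do not merge a vertex of $N\setminus Z$ into $c$:
\begin{itemize}
\item The $K_5^-$ model from Lemma~\ref{lem:lot-on-five} lives in $G[M]$ and may use vertices of $M\setminus Z$. For instance, when $|M|\in\{6,7\}$ it contracts edges to the vertices outside the root set. So the bags need not stay disjoint.
\item That extra vertex is a neighbour of $v$, so your claim that $v$ and $c$ are non-adjacent also fails.
\end{itemize}
The extra vertex is unnecessary anyway. Contracting $C$ alone already gives a vertex adjacent to every vertex of $Z\subseteq N$, so your $|N|=5$ construction works verbatim for every $|N|\ge 5$. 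This is how the paper treats all cases $|\partial_G C_i|\ge 5$ at once, with $S$ an arbitrary $5$-subset of the boundary.

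Two smaller precision points. First, $\partial_G T=N$ exactly, since $v\in T$ is adjacent to all of $N$, so no shrinking of the roots is needed. Second, $|C|\ge 2$ by itself does not give $|C\cup N|\ge 3$. The cleanest route is the paper's: a vertex of $C$ has at least five neighbours, all in $C\cup N$, by Corollary~\ref{cor:delta}, so $|C\cup N|\ge 6$.
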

\begin{proof}
Let $H=G[M]$ be the subgraph of $G$ induced by the neighbors of $v$ and consider any $i\in\{1,\ldots,k\}$.
Let $S$ be a subset of $\bd_GC_i$ of size $\min(5,|\bd_GC_i|)$.

Suppose first that $|\bd_GC_i|\ge 5$.  By Lemma~\ref{lem:lot-on-five}, we can contract $H$ to $K_5^-$ on $S$.
By further contracting $G[C_i]$ to a single vertex $u$, we obtain $\target$ on $S\cup\{u,v\}$ as a minor of $G$.
This is a contradiction.  Therefore, we have $|\bd_GC_i|\le 4$ and $S=\bd_GC_i$.

Suppose now that $\rhoFour(G,C_i)>0$, and let $Y=V(G)\setminus (C_i\cup S)$. Then $(C_i,Y)$ is a $(\le\!4)$-bifragment
in $G$, and since $G$ is $4$-bilight, we have $\rhoFour(G,Y)\le 0$.  Corollary~\ref{cor:lot-on-lefour} implies
that $K_{|S|}$ is a rooted minor of $R^G_Y$.  Moreover, since $G$ has minimum degree at least five, we have
$|C_i\cup S|\ge 6$.  It follows that the $(\le\!4)$-fragment $Y$ of $G$ is reducible, contradicting Lemma~\ref{lem:den-no-reducible}.
This contradiction shows that $\rhoFour(G,C_i)\le 0$.
\end{proof}

With this, it is easy to further increase the lower bound on the minimum degree of a minimal \denco{}.
\begin{corollary}\label{cor:deg8}
Every minimal \denco{} has minimum degree at least eight.
\end{corollary}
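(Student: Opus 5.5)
Let $G$ be a minimal \denco{} and suppose for a contradiction that some vertex $v$ has degree at most seven. Let $M$ be its neighbourhood and $C_1,\ldots,C_k$ the vertex sets of the components of $G-(M\cup\{v\})$. The plan is to sum up the 4-density of $G$ piece by piece and show that it is at most $-8$, contradicting $\rhoFour(G)\ge -7$ (in fact Lemma~\ref{lemma:one-edge-delete} gives $\rhoFour(G)=-7$ exactly).

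\textbf{Decomposition.} Every edge of $G$ either lies inside $G[M\cup\{v\}]$ or has an end in some $C_i$. Hence
$$\rhoFour(G)=|E(G[M\cup\{v\}])|-4(|M|+1)+\sum_{i=1}^k\rhoFour(G,C_i).$$
By Lemma~\ref{lem:sepby4}, each term $\rhoFour(G,C_i)\le 0$. It therefore suffices to bound the first part.

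\textbf{Bounding the first part.} Write $d=|M|\le 7$. Then
$$|E(G[M\cup\{v\}])|\le d+\binom{d}{2},$$
with the additional observation that $G[M\cup\{v\}]$ is $\target$-minor-free. This gives
$$\rhoFour(G)\le d+|E(H)|-4d-4,$$
where $H=G[M]$.
\begin{itemize}
\item For $d=5$: $|E(H)|\le 10$, so $\rhoFour(G)\le 5+10-24=-9$.
\item For $d=6$: $H$ has at most 15 edges. If $H=K_6$, then $v$ together with $H$ is $K_7\supseteq\target$, which is forbidden. Indeed, any $H$ with at most two non-edges forming a matching would give $\target$. So $|E(H)|\le 12$, and $\rhoFour(G)\le 6+12-28=-10$. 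Even the crude bound $|E(H)|\le 14$ gives $-8$.
\item For $d=7$: we need $|E(H)|\le 21-7+\ldots$. Precisely, $\rhoFour(G)\le 7+|E(H)|-32$, so we need $|E(H)|\le 16$.
\end{itemize}

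\textbf{The case $d=7$ (main obstacle).} By Lemma~\ref{lem:one-edge-contract}, $H$ has minimum degree at least four, so $|E(H)|\ge 14$. The task is to show that $|E(H)|\ge 17$ forces a $K_6^=$ minor in $H$ (that is, $K_6$ minus two independent edges). Adding $v$ to such a minor would yield $\target$. When $|E(H)|\ge 17$, the complement $\overline{H}$ has at most 4 edges. The plan is to pick a vertex $z$ and contract it onto a suitable neighbour so as to kill one or two of the missing edges. With at most 4 non-edges among 7 vertices, the remaining six-vertex graph has at most two missing edges forming a matching, giving $K_6^=$. This is a small case analysis on the structure of $\overline{H}$: a path, a star, a triangle plus an edge, and so on.

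Alternatively, one can invoke Lemma~\ref{lem:lot-on-five} more cleverly: contract a neighbour outside a 5-set $S$ to obtain $K_5^-$ on $S$ together with one further vertex adjacent to most of $S$. The first route seems cleaner, and we check the few complement configurations directly.

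Combining the cases, every $d\le 7$ gives $\rhoFour(G)\le -8$, which is a contradiction.
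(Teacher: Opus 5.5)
Your decomposition $\rhoFour(G)=|E(G[M\cup\{v\}])|-4(|M|+1)+\sum_i\rhoFour(G,C_i)$ via Lemma~\ref{lem:sepby4} is exactly the paper's argument. So is your treatment of $d=5$, and of $d=6$ with the crude bound $|E(H)|\le 14$. Two small points there:
\begin{itemize}
\item Your intermediate claim $|E(H)|\le 12$ for $d=6$ is unjustified. Two non-edges sharing a vertex give $K_7^{\vee}$, which has no $\target$ minor. You do not need this claim, though.
\item The case $d\le 4$ is not treated, but it is excluded by Corollary~\ref{cor:delta}.
\end{itemize}

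The real problem is the case $d=7$, where an off-by-one error leads you to a false claim. From $\rhoFour(G)\le |E(H)|-25$, contradicting $\rhoFour(G)\ge -7$ only requires $|E(H)|\le 17$, not $|E(H)|\le 16$. The statement you set out to prove is that $|E(H)|\ge 17$ (at most four non-edges) forces a $K_6^{=}$ minor in $H$. It is false. Let $\overline{H}$ be a $4$-cycle $abcd$ plus three isolated vertices $e,f,g$. Then $H$ has $17$ edges and minimum degree four. Every vertex deletion or edge contraction of $H$ leaves a $6$-vertex graph with two non-edges sharing a vertex:
\begin{itemize}
\item deleting $a$ leaves $bc,cd$;
\item contracting $ea$ also leaves $bc,cd$;
\item contracting $ac$ leaves non-edges from the new vertex to both $b$ and $d$;
\item deleting or contracting inside $\{e,f,g\}$ keeps the whole $4$-cycle.
\end{itemize}
Hence $H$ has no $K_6^{=}$ minor, and in fact $G[M\cup\{v\}]$ has no $\target$ minor at all. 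Your assertion that ``the remaining six-vertex graph has at most two missing edges forming a matching'' fails exactly at this configuration, so the case analysis you sketch cannot be completed.

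With the correct threshold, the fix is immediate and coincides with the paper's proof. We get $|E(H)|\ge 18$, so $\overline{H}$ has at most three edges. If two of them share a vertex $z$, then $H-z$ has at most one non-edge. Otherwise they form a matching; deleting a vertex $z$ covering one of them (any vertex if there are none) leaves at most two independent non-edges. Either way $H-z$ contains $K_6^{=}$, and adding $v$ gives $\target$.
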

\begin{proof}
Suppose for a contradiction that a minimal \denco{} $G$ contains a vertex $v$ of degree at most $7$.
Let $Q$ be the set consisting of $v$ and all neighbors of $v$ and let $C_1$, \ldots, $C_k$ be the vertex sets of the components of $G-Q$.
By Lemma~\ref{lem:sepby4}, we have
$$-7=\rhoFour(G)=|E(G[Q])|-4|Q|+\sum_{i=1}^k \rhoFour(G,C_i)\le |E(G[Q])|-4|Q|,$$
and thus
$$|E(G[Q])|\ge 4|Q|-7.$$
If $\deg v=5$, then $|Q|=6$ and $|E(G[Q])|\ge 17>\binom{|Q|}{2}$, which is a contradiction.
If $\deg v=6$, then $|Q|=7$ and $|E(G[Q])|\ge 21=\binom{|Q|}{2}$.  Hence, $G[Q]$ is a clique of size seven, a contradiction
since $G$ is $\target$-minor-free.

Therefore, we have $\deg v=7$, $|Q|=8$, and $|E(G[Q])|\ge 25$ while $\binom{|Q|}{2}=28$.
Hence, $G[Q]$ has $m\le 3$ non-edges.  If at least $m-1$ of them are incident with the same vertex $z$,
then $G[Q]-z$ is a supergraph of $K_7^-\supset\target$, which is a contradiction.  Therefore $G[Q]$ has
exactly three non-edges $u_1u_2$, $u_3u_4$, and $u_5u_6$ not incident with the same vertex.
However, then $G[Q]-u_6$ is isomorphic to $\target$, which is a contradiction.
\end{proof}

\noindent The rest of the proof is trivial.
\begin{proof}[Proof of Theorem~\ref{thm:density}]
Suppose for a contradiction that Theorem~\ref{thm:density} is false, and thus
there exists a minimal \denco{} $G$.  Corollary~\ref{cor:deg8} implies that
$|E(G)|\ge 4|V(G)|$.  However, this contradicts Lemma~\ref{lemma:one-edge-delete}.
\end{proof}

\section{6-colorability of $\target$-minor-free graphs}
\label{sec:coloring}

We are now ready to prove our main result, Theorem~\ref{thm:coloring}.
Recall that a graph $G$ is \emph{$k$-contraction-critical} if it has chromatic number~$k$, but every proper minor of $G$ is
$(k-1)$-colorable.  A minimal counterexample to Theorem~\ref{thm:coloring} clearly is $7$-contraction-critical, and thus
it will be convenient to recall a few well-known facts about such graphs.

\begin{theorem}[Mader~\cite{Mader1968Connectivity}]\label{thm:critical-connected}
For every $k\geq7$, every $k$-contraction-critical graph other than $K_k$ is $7$-connected.
\end{theorem}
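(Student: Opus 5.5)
We argue by contradiction: let $G$ be $k$-contraction-critical with $k\ge 7$, $G\neq K_k$, and suppose $G$ has a proper separation $(A,B)$ of order at most six. Choose $S=A\cap B$ to be a minimum separator. Since $G$ is contraction-critical, it is $k$-critical and has minimum degree at least $k-1\ge 6$. Also, every vertex of $S$ has a neighbor in every component of $G-S$, because $S$ is minimal.

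The tool is a colour-gluing argument. Let $M$ be a set of pairwise disjoint non-adjacent pairs in $S$. Say a side $B$ \emph{realizes} $M$ if $R_{A,B}$ contains, as an $\id$-rooted minor in the sense of the paper, a graph on $S$ in which
\begin{itemize}
\item each pair of $M$ is identified (we allow contracting a path through $B\setminus A$ joining the pair), and
\item all other pairs of the resulting classes are adjacent.
\end{itemize}
Suppose both $B$ realizes $M$ (for use on the $A$ side) and $A$ realizes $M$ (for use on the $B$ side). Then $G[A]$ together with that minor is a proper minor of $G$, and likewise for $B$, so both are $(k-1)$-colourable. In each colouring the classes of the partition of $S$ induced by $M$ get pairwise distinct colours, and each pair of $M$ gets one colour. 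After permuting colours the two colourings agree on $S$, and they glue to a $(k-1)$-colouring of $G$, a contradiction. In particular, if $G[S]$ is a clique we take $M=\emptyset$ and are done immediately. This is Dirac's observation, giving connectivity at least $5$ in the classical way.

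So the work is to show that some common $M$ is realized from both sides whenever $|S|\le 6$. The plan has three steps.
\begin{enumerate}
\item Take components $C$ of $G-S$ on each side. Each is adjacent to all of $S$, so contracting a component to one vertex already gives a vertex adjacent to all of $S$.
\item Use the high minimum degree together with the minimality of $S$ (any smaller separator would contradict the choice) to get, inside $G[C\cup S]$, disjoint paths linking prescribed non-adjacent pairs of $S$. Via Menger, and for two pairs via the two-linkage (Seymour--Thomassen) characterization, the obstruction would be a planar-like structure. That structure forces vertices of low degree or a smaller separator, contradicting degree at least $6$.
\item Choose $M$ to be a maximal matching in the complement of $G[S]$ that both sides can link. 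The unmatched non-edges are then handled by the vertices obtained from contracted components, which supply the missing adjacencies after identification.
\end{enumerate}

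The cases $|S|\le 4$ are short. For $|S|=5$ a single contracted component plus one linking path suffices. The main obstacle is $|S|=6$, the step that needs $k\ge 7$: there the complement of $G[S]$ can contain, for example, a perfect matching or a triangle, and we must realize the same $M$ on both sides simultaneously. My first attempt would be a case analysis on the complement of $G[S]$. In each case I would either exhibit two disjoint linking paths on each side (using step 2), or derive a separator of order at most $5$ or a vertex of degree less than $6$, contradicting the choices above. This mirrors Mader's original argument.
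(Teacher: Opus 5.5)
The paper gives no proof of this statement; it only cites Mader. Judged on its own, your proposal has a genuine gap. Your gluing lemma is correct: a common pattern $M$ of identified non-adjacent pairs, with all resulting classes pairwise adjacent, realized as a rooted minor on both sides of a minimum separator $S$, gives a $(k-1)$-colouring. It also settles the case that $G[S]$ is a clique. But the whole content of Mader's theorem is showing that such a common pattern exists when $|S|\in\{5,6\}$. You defer this to a case analysis you do not carry out, and both mechanisms you propose for it fail as stated.

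First, a contracted component supplies only a star. Merged into one class, it adds exactly the missing adjacencies at that class. If $B\setminus A$ is a single component, this piece and the linking paths for $M$ must be disjoint connected subgraphs of that same component, which is itself a rooted-minor problem. So your claim that for $|S|=5$ one contracted component plus one linking path suffices is false in general. Take $G[S]$ edgeless: identifying one pair leaves six missing adjacencies among four classes, of which a star supplies three. Identifying two pairs leaves three, of which a star supplies two.

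Second, step 2 does not yield the promised contradiction. Linking paths must avoid the other vertices of $S$, so for $|S|=6$ you work in $G[C\cup\{s_1,t_1,s_2,t_2\}]$. There, vertices of $C$ are only guaranteed degree $k-2$ (even using $\delta(G)\ge k$), i.e.\ $5$ for $k=7$. The planar two-linkage obstruction is compatible with this. For instance, delete an edge $ab$ from the icosahedron and let $c,d$ be the other two vertices of the resulting quadrilateral face. Then:
\begin{itemize}
\item $ab$ and $cd$ are non-edges, and there are no disjoint $a$--$b$ and $c$--$d$ paths;
\item all vertices other than $a,b$ have degree $5$;
\item taking the remaining eight vertices as $C$ and joining them to the other two vertices of $S$ gives them degree $7$;
\item since the icosahedron minus an edge is $4$-connected, no separation of order less than $6$ splits $C$.
\end{itemize}
So one side can genuinely fail to realize a prescribed pattern without any low-degree vertex or smaller separator appearing. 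A proof must therefore exploit the freedom in choosing the pattern, possibly with independent classes larger than pairs, together with the interaction between the two sides. That argument is exactly what is missing, and it is the substance of Mader's paper. As it stands, your proposal is the easy Dirac-type framework plus a promissory note. For this paper, citing Mader, as the authors do, is the appropriate route.
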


Theorem~\ref{thm:critical-connected} easily implies that a $7$-contraction-critical graph $G$ other than $K_7$
has clique number at most five.  In our setting, it in fact implies the following stronger claim.
\begin{lemma}\label{lem:no-K6eq}
If a graph $G$ is $7$-contraction-critical and $K_7^-$-minor-free, then it does not contain $K_6^-$ as a subgraph.
\end{lemma}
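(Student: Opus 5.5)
Suppose for a contradiction that $G$ is $7$-contraction-critical and $K_7^-$-minor-free, and that $G$ contains a subgraph $K$ isomorphic to $K_6^-$. Let $V(K)=\{a,b,c_1,c_2,c_3,c_4\}$, where $ab$ is the missing edge of $K$. Note that $G\neq K_7$, since $K_7$ contains $K_7^-$. Theorem~\ref{thm:critical-connected} therefore implies that $G$ is $7$-connected. Because $G$ has at least eight vertices, $G-V(K)$ is non-empty. The plan is to build a $K_7^-$ minor using one additional branch set taken from $G-V(K)$.

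The first step handles the missing edge $ab$. In $G-\{c_1,c_2,c_3,c_4\}$, which is $3$-connected, there is a path $P$ from $a$ to $b$ that avoids the vertices $c_1,\ldots,c_4$. Suppose we can choose $P$ so that its interior misses some vertex $w\notin V(K)$, and so that $w$ has a connected region $C$ of $G-V(K)-V(P)$ attached to many vertices of $K$. Then we could contract $P$ into $a$, which restores a $K_6$ on $\{a,c_1,\ldots,c_4,b\}$, contract $C$ into a seventh vertex, and obtain $K_7^-$ if $C$ is adjacent to at least five of the six branch sets. This bookkeeping is delicate, so I would instead use the following cleaner route.

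Take a component $C$ of $G-V(K)$. By $7$-connectivity, $C$ has at least seven neighbors. Since $|V(K)|=6$, either $C$ has a neighbor outside $V(K)$, which is impossible because $C$ is a component of $G-V(K)$, or $C$ is the whole of $G-V(K)$ and is adjacent to all six vertices of $K$ with further structure. More precisely, $7$-connectivity gives $|N(C)|\ge 7$ whenever $V(G)\setminus (C\cup N(C))\neq\emptyset$. Since $N(C)\subseteq V(K)$ has size at most six, this forces $V(G)=C\cup V(K)$, so $C$ is the only component and every vertex of $K$ has a neighbor in $C$. If we contract $C$ to a single vertex $u$, we obtain $K_6^-$ together with a vertex adjacent to all six vertices, which is $K_7^=$ with only one pair missing. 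Counting gives $21-1=20$ edges, which is exactly $K_7^-$. This contradicts the assumption that $G$ is $K_7^-$-minor-free.

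The only step requiring care is the connectivity argument showing that $C$ sees all of $V(K)$. In the one-component case, note that every vertex of $K$ has degree at least $7$ in $G$, while it has at most five neighbors inside $K$. Hence each vertex of $K$ has a neighbor outside $V(K)$, that is, in $C$. This settles the case where $G-V(K)$ is connected. If $G-V(K)$ has two components, each component's neighborhood is contained in $V(K)$, which has size six and so forms a cut of size at most six, contradicting $7$-connectivity. The main obstacle is thus only ensuring $G\ne K_7$ so that Theorem~\ref{thm:critical-connected} applies, and this holds because $K_7$ contains $K_7^-$.
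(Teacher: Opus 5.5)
Your argument is correct and is essentially the paper's proof: Mader's theorem gives $7$-connectivity, so $G-V(K)$ is connected and every vertex of $K$ has a neighbor in it, and contracting $G-V(K)$ to a single vertex turns the $K_6^-$ into a $K_7^-$ minor. You can delete the abandoned first paragraph about the path $P$, and the phrase ``$K_7^{=}$ with only one pair missing'' should read ``$K_7$ with only one pair missing''.
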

\begin{proof}
Consider any $6$-vertex set $W\subseteq V(G)$.  Since $G$ is $7$-connected by Theorem~\ref{thm:critical-connected},
the subgraph $G-W$ is connected and every vertex of $W$ has a neighbor in $V(G)\setminus W$.
The minor of $G$ obtained by contracting $G-W$ to a single vertex cannot contain $K_7^-$ as a subgraph,
and thus $G[W]$ does not contain $K_6^-$ as a subgraph.
\end{proof}

On the other hand, the following result forces the neighborhoods of small-degree vertices to be quite dense.

\begin{theorem}[Dirac~\cite{Dirac1960}]\label{thm:alpha}
For every $k$-contraction-critical graph $G$, the subgraph of $G$ induced by the neighbors
of any vertex $v\in V(G)$ has independence number at most $\deg v-k+2$.
In particular if $G\neq K_k$, then $G$ has minimum degree at least $k$.
\end{theorem}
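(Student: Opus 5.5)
The statement to prove is Dirac's theorem (Theorem~\ref{thm:alpha}), which is classical. I would give the standard Kempe-style argument based on contraction-criticality.

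\textbf{Setup.} Let $G$ be $k$-contraction-critical, let $v\in V(G)$, and let $I$ be an independent set in $G[N(v)]$ of size $s$. The goal is to show $s\le \deg v-k+2$. We may assume $s\ge 1$; the case $s=0$ is trivial when $\deg v\ge k-2$, and otherwise handled by the same count below. Let $G'$ be obtained from $G$ by contracting all edges between $v$ and $I$ into a single vertex $w$.

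\textbf{Main step.} The set $\{v\}\cup I$ induces a connected star, so $G'$ is a proper minor of $G$: it has fewer vertices since $s\ge 1$. By criticality, $G'$ has a proper $(k-1)$-coloring $\varphi$.

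Lift $\varphi$ to $G-v$ by giving every vertex of $I$ the color $\varphi(w)$. This is proper because:
\begin{itemize}
\item $I$ is independent;
\item every neighbor of a vertex of $I$ outside $\{v\}\cup I$ is adjacent to $w$ in $G'$, so its color differs from $\varphi(w)$.
\end{itemize}
The vertices of $N(v)\setminus I$ are adjacent to $w$ in $G'$, so they avoid the color $\varphi(w)$. Hence $N(v)$ uses at most $1+|N(v)\setminus I| = 1+\deg v-s$ colors.

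Since $G$ is not $(k-1)$-colorable, this coloring of $G-v$ cannot be extended to $v$. So all $k-1$ colors must appear on $N(v)$, giving
\[
k-1\le \deg v-s+1,
\]
that is, $s\le \deg v-k+2$.

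\textbf{Minimum degree.} Suppose $G\ne K_k$. Any vertex $v$ of positive degree has $\alpha(G[N(v)])\ge 1$, so $1\le \deg v-k+2$, i.e.\ $\deg v\ge k-1$.

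To reach $k$, I would use the refinement that a clique of size $k-1$ in $N(v)$ would give $K_k\subseteq G$. A $k$-chromatic graph containing $K_k$ as a proper subgraph is not $k$-contraction-critical, since deleting an edge outside that $K_k$ gives a proper minor still containing $K_k$. So $G[N(v)]$ is not complete, which gives $\alpha\ge 2$ and hence $\deg v\ge k$.

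Isolated vertices cannot occur, since $G$ minus an isolated vertex is a proper minor with the same chromatic number.

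\textbf{Main obstacle.} The only delicate points are checking that the contraction is indeed a proper minor and handling the edge case $\alpha=1$ in the minimum-degree claim. Both are routine.
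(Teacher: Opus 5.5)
Your argument is correct. Contracting the star on $v$ and a maximum independent set $I\subseteq N(v)$, then lifting a $(k-1)$-coloring to $G-v$, is the standard proof of Dirac's bound. The paper gives no proof of its own and simply quotes the result from Dirac, so there is nothing to compare against.

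Two small points of polish:
\begin{itemize}
\item For $s=0$, the phrase ``handled by the same count'' does not work literally, since contracting an empty star does not give a proper minor. Instead, delete $v$: a $(k-1)$-coloring of $G-v$ cannot extend to $v$, so $\deg v\ge k-1$, which settles that case.
\item In the minimum-degree step, it is cleaner to delete a \emph{vertex} outside the copy of $K_k$. Such a vertex exists because $G\neq K_k$, and this avoids relying on the existence of an edge outside the clique.
\end{itemize}
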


Consequently, the neighborhood of a vertex $v$ of degree seven in a $7$-contract\-ion-critical
graph has independence number at most two, which often forces the presence of a clique of size four (combining with $v$
to a clique of size five).
However, there is an important exception.  The \emph{Moser spindle} is the 7-vertex graph
obtained from a 5-cycle $v_1\ldots v_5$ by blowing up two non-adjacent vertices, say $v_2$ and $v_4$; that is,
by adding a vertex $v'_2$ adjacent to $v_1$, $v_2$, and $v_3$, and a vertex $v'_4$ adjacent to $v_3$, $v_4$, and $v_5$.

\begin{theorem}[{Kawarabayashi and Toft~\cite[Section~2]{KT2005}}]\label{thm:spindle}
Let $H$ be a $7$-vertex graph. If $\alpha(H)\le 2$, then $H$ contains $K_4$ or the Moser spindle
as a subgraph.
\end{theorem}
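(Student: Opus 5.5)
We pass to complements. Let $\overline{H}$ be the complement of $H$. The hypothesis $\alpha(H)\le 2$ says that $\overline{H}$ is triangle-free. Suppose that $H$ contains no $K_4$; then $\alpha(\overline{H})\le 3$. Our goal is to show that $\overline{H}$ is a (spanning) subgraph of the complement $\overline{M}$ of the Moser spindle, up to isomorphism. That is exactly the statement that $H$ contains the Moser spindle as a subgraph. Adding edges to $\overline{H}$ cannot increase $\alpha(\overline{H})$. We therefore first replace $\overline{H}$ by an edge-maximal triangle-free supergraph $F$ on the same vertex set. Then $F$ is triangle-free with $\alpha(F)\le 3$, and any two non-adjacent vertices of $F$ have a common neighbor. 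It suffices to show that every such $F$ is isomorphic to $\overline{M}$, or at least to a subgraph of it; this step is the reduction.

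Next we pin down the degrees in $F$ using Ramsey bounds. For a vertex $v$, the neighborhood $N_F(v)$ is independent, so $\deg_F v\le 3$. The non-neighborhood $V(F)\setminus N_F[v]$ is triangle-free and has independence number at most $2$, since together with $v$ an independent set grows by one. Because $R(3,3)=6$, it has at most $5$ vertices, so $\deg_F v\ge 1$. Since $7$ is odd, $F$ is not $3$-regular. Moreover, $F$ has at least $21-\mathrm{ex}$ non-edges constrained by $\alpha\le 3$. Here we record that $\overline{M}$ has $21-11=10$ edges: it is $C_7$ together with one further edge, a triangle-free graph with independence number $3$.

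The main step is a short case analysis on a vertex $v$ of minimum degree in $F$. If $\deg_F v=1$, then the non-neighborhood of $v$ is a $5$-vertex triangle-free graph with $\alpha\le 2$, hence a $5$-cycle $C$. Maximality and triangle-freeness then force the neighbor $w$ of $v$ to be adjacent to exactly two non-consecutive vertices of $C$ (it is adjacent to at most two, since its neighborhood is independent). One checks that this gives $\overline{M}$ or a graph with an independent $4$-set, which is excluded. If $\deg_F v=2$, the non-neighborhood is a $4$-vertex triangle-free graph with $\alpha\le2$, which must contain a perfect matching or a $4$-cycle. We then track the adjacencies of the two neighbors of $v$, using that $v$ together with any independent triple of $V(F)\setminus N_F[v]$ is forbidden. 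If $\delta(F)=3$, then every vertex has degree $3$, which is impossible. In each surviving configuration we exhibit an isomorphism to a subgraph of $\overline{M}$; equivalently, we read off directly in $H$ the $5$-cycle $v_1\ldots v_5$ and the two blown-up vertices $v'_2,v'_4$.

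I expect the bookkeeping in the degree-$2$ case to be the main obstacle. The clean way to handle it is to work in $H$ rather than $\overline{H}$. There, the two non-neighbors of $v$ form an edge, and the neighborhood of $v$ is a triangle-free graph with $\alpha\le 2$ on $4$ vertices, so it contains a $4$-cycle or a perfect matching. The spindle is then found by using $v$ as $v_3$ and choosing $v'_2,v'_4$ among its neighbors. Alternatively, as the statement is quoted from Kawarabayashi and Toft~\cite[Section~2]{KT2005}, one may simply invoke their result.
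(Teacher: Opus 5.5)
\textbf{There is a genuine gap: the decisive case $\delta(F)=2$ is never proved, and the concrete claims you do make are wrong.} Your reduction to an edge-maximal triangle-free $F\supseteq\overline{H}$ with $\alpha(F)\le 3$ is sound. So are the bounds $1\le \deg_F v\le 3$ and the parity argument against $3$-regularity. But for $\delta(F)=2$ you only announce that you will ``track the adjacencies'' and ``exhibit an isomorphism''. The specific errors are these:
\begin{itemize}
\item $\overline{M}$ has $10$ edges, so it is not $C_7$ plus one edge. It is the $K_{2,2}$ between $\{v_2,v_2'\}$ and $\{v_4,v_4'\}$, plus $v_1$ joined to $v_3,v_4,v_4'$ and $v_5$ joined to $v_3,v_2,v_2'$. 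Its only vertex of degree two is $v_3$.
\item In the degree-$1$ case, the graph you arrive at ($C_5$, a vertex $w$ joined to two non-consecutive cycle vertices, and $v$ pendant at $w$) has $8$ edges and independence number $3$. So it is neither $\overline{M}$ nor excluded by an independent $4$-set. In fact, maximality applied to the pairs $\{v,c\}$ with $c\in V(C)$ forces $w$ to be adjacent to all of $C$, so this case cannot occur at all.
\item Splitting on whether $N_{\overline F}(v)$ contains a $4$-cycle or a perfect matching does not settle the degree-$2$ case. Suppose $N_{\overline F}(v)=\{p_1,\dots,p_4\}$ induces the path $p_1p_2p_3p_4$, and the remaining vertices satisfy $N_{\overline F}(a)=\{b,p_1,p_4\}$ and $N_{\overline F}(b)=\{a,p_2,p_3,p_4\}$. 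Then $\overline F$ has $\alpha\le 2$, no $K_4$, and $\deg_F v=2=\delta(F)$. Yet $a$ is complete to neither edge of the unique perfect matching $\{p_1p_2,p_3p_4\}$, so $v$ cannot play the role of $v_3$. Maximality excludes this $F$ precisely at a pair of the form $\{v,u\}$: $v$ and $p_4$ have no common $F$-neighbour.
\end{itemize}

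\textbf{The missing step is to apply maximality to the pairs $\{v,u\}$ with $u\notin N_F[v]$; this finishes the proof in a few lines.}
\begin{enumerate}
\item Each such $u$ has a common $F$-neighbour with $v$. Each $z\in N_F(v)$ has at most two $F$-neighbours besides $v$, because its neighbourhood is independent and $\alpha(F)\le 3$. Hence $6-\deg_F v\le 2\deg_F v$, so $\delta(F)\ge 2$; you need neither $R(3,3)$ nor the $C_5$ analysis.
\item By parity, some $v$ has $N_F(v)=\{a,b\}$. Let $P=N_F(a)\setminus\{v\}$ and $Q=N_F(b)\setminus\{v\}$. Each has at most two elements, and together they cover the four non-neighbours of $v$, so they are disjoint pairs.
\item Each of $P$ and $Q$ is $F$-independent, hence an edge of $H$. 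In $H$, $a$ is adjacent to both vertices of $Q$ and $b$ to both vertices of $P$. Also $ab\in E(H)$ since $F$ is triangle-free, and $v$ is adjacent in $H$ to all of $P\cup Q$.
\item Write $P=\{x,x'\}$ and $Q=\{y,y'\}$. The $5$-cycle $ayvxb$, with $y$ blown up by $y'$ and $x$ blown up by $x'$, is a Moser spindle in $H$ with $v$ as $v_3$.
\end{enumerate}
The paper gives no proof of this statement; it simply cites Kawarabayashi and Toft, as your closing remark suggests. Your last paragraph had the right picture ($v$ as $v_3$), but not the covering argument that justifies it.
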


We handle the exceptional Moser spindle case using a result of Kriesell and Mohr~\cite{KM2019}
on minor-realizability of Kempe chains.  Given a proper coloring $\varphi$ of a graph $G$,
we say that differently colored vertices $u,v\in V(G)$ are \emph{$\varphi$-Kempe-adjacent} if $G$ contains a path from $u$ to $v$
on which $\varphi$ only uses the colors $\varphi(u)$ and $\varphi(v)$.

\begin{theorem}[{Kriesell and Mohr~\cite[Lemma~2]{KM2019}}]\label{thm:Kempe-cycle}
Let $\varphi$ be a proper coloring of a graph $G$ and let $v_1$, \ldots, $v_k$
be vertices of $G$ given pairwise distinct colors by $\varphi$.
If the vertices $v_i$ and $v_{i+1}$ are $\varphi$-Kempe-adjacent for every $i\in\{1,\ldots, k\}$ (where $v_{k+1}=v_1$),
then $\roots{G}{\{v_1,\ldots,v_k\}}$ contains the $k$-cycle $v_1\ldots v_k$
as an $\id$-rooted minor.
\end{theorem}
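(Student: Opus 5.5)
The plan is to build the $\id$-rooted model of the cycle directly from the Kempe chains. Write $c_i=\varphi(v_i)$, with indices taken modulo $k$. For each $i$, fix a path $P_i$ from $v_i$ to $v_{i+1}$ coloured only with $c_i$ and $c_{i+1}$, chosen so that $\sum_i|V(P_i)|$ is minimal; in particular each $P_i$ is induced in the subgraph of $G$ coloured by $\{c_i,c_{i+1}\}$. The key structural fact is that colour classes separate the paths. If $\{i,i+1\}\cap\{j,j+1\}=\emptyset$, then $P_i$ and $P_j$ use disjoint colour pairs and are therefore vertex-disjoint. Two consecutive paths $P_{i-1}$ and $P_i$ can share only vertices of colour $c_i$. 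For $k\ge 4$ these are the only possible intersections. The cases $k\le 2$ are trivial: for $k=2$, contract $P_1$ to an edge. For $k=3$, $P_{i-1}$ and $P_{i+1}$ share the colour $c_{i+1}$ or $c_{i-1}$ only in the expected way, so the same bookkeeping applies.

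Next I construct the bags. On $P_i$, traversed from $v_i$, let $x_i$ be the first vertex lying on $P_{i+1}$; it exists because $v_{i+1}\in P_{i+1}$, and it has colour $c_{i+1}$. Let $z_i$ be the vertex of $P_i$ immediately before $x_i$; it has colour $c_i$. I then set
\[
\mu(v_i)=V\bigl(P_{i-1}[x_{i-1},v_i]\bigr)\cup V\bigl(P_i[v_i,z_i]\bigr).
\]
This is connected and contains $v_i$. The edge $z_ix_i$ joins $\mu(v_i)$ to $\mu(v_{i+1})$, since $x_i$ lies on $P_i$ and hence in $P_{i}[x_i,v_{i+1}]\subseteq\mu(v_{i+1})$. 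What remains is to verify that the bags are pairwise disjoint.

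Disjointness between non-consecutive bags follows from the colour argument above, because each bag uses only the colours $c_{i-1},c_i,c_{i+1}$ and lives on $P_{i-1}\cup P_i$. The real work, and the step I expect to be the main obstacle, is the interaction of $\mu(v_i)$ and $\mu(v_{i+1})$ along $P_i$. Here one needs $P_i[v_i,z_i]$ to avoid $P_i[x_i,v_{i+1}]$, which is immediate since $z_i$ precedes $x_i$. One also needs $P_i[v_i,z_i]$ to avoid $P_{i+1}[v_{i+1},z_{i+1}]$, which holds by the choice of $x_i$ as the \emph{first} common vertex. The delicate part is the tail $P_{i-1}[x_{i-1},v_i]$ of the previous bag. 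This tail must not re-enter $P_i$ beyond $z_i$, and it must not hit $P_{i+1}$. It cannot hit $P_{i+1}$ when the colour pairs $\{c_{i-1},c_i\}$ and $\{c_{i+1},c_{i+2}\}$ are disjoint, that is, when $k\ge 4$; when $k=3$ this must be checked separately.

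If the first-hit choice does not make the ordering work, I would first try to exploit the minimality of $\sum|V(P_i)|$. Whenever two paths cross in the ``wrong order'', one can reroute along the other, shortening the total length. Should that get messy, the fallback is induction on $k$ via contracting $P_1[v_1,x_1]$ into $v_2$'s side. In either approach, the uncrossing argument is where most of the care goes.
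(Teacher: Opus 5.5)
Your construction fails at exactly the step you flag as delicate, and the failure does not come from a poor choice of paths. Take $k=4$ and the graph on $v_1,v_2,v_3,v_4,w,y,a,b$ with $\varphi(v_i)=i$, $\varphi(y)=1$, $\varphi(w)=2$, $\varphi(a)=3$, $\varphi(b)=4$, and edges $v_1w$, $wy$, $yv_2$, $v_2a$, $aw$, $wv_3$, $v_3b$, $ba$, $av_4$, $v_4v_1$. The colouring is proper, and each of the four relevant bichromatic subgraphs contains exactly one path between its two roots. So $P_1=v_1wyv_2$, $P_2=v_2awv_3$, $P_3=v_3bav_4$ and $P_4=v_4v_1$ are forced, and in particular they minimise $\sum_i|V(P_i)|$. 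Your rule gives $x_1=w$ and $x_2=a$. Hence $w\in V(P_1[x_1,v_2])\subseteq\mu(v_2)$ and also $w\in V(P_2[x_2,v_3])=\{a,w,v_3\}\subseteq\mu(v_3)$, so these two bags intersect. In general $x_{i-1}$ always lies on $P_i$, and nothing prevents it from lying beyond $x_i$ (here $w$ comes after $a$ on $P_2$). When that happens, $x_{i-1}$ belongs both to the tail $P_{i-1}[x_{i-1},v_i]$ that you place in $\mu(v_i)$ and to the segment $P_i[x_i,v_{i+1}]$ that you place in $\mu(v_{i+1})$. The theorem still holds in this example: $\{v_1\}$, $\{v_2,y,w\}$, $\{v_3,b\}$, $\{v_4,a\}$ are the bags of a rooted model of the $4$-cycle. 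But that model is not what your rule produces; for instance, the bag of $v_4$ has to absorb $a$, the vertex where $P_3$ last meets $P_2$.

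So the central idea of the proof is missing. You need an argument that copes with crossings between the ``loops'' $P_{i-1}[u,v_i]\cup P_i[v_i,u]$, for $u\in V(P_{i-1})\cap V(P_i)$, at consecutive roots. Neither fallback provides it. Uncrossing via minimality of $\sum_i|V(P_i)|$ is ruled out by the example above, where all paths are unique. The induction on $k$ by contracting $P_1[v_1,x_1]$ is not specified: the contracted vertex merges $v_1$ with a vertex of $P_2$, and the colouring need not stay proper. You also do not say to which smaller instance the induction hypothesis applies, or how the resulting bag would be split back into bags for two roots. A smaller inaccuracy: disjointness of non-consecutive bags does not follow from colours alone, since $\mu(v_i)$ and $\mu(v_{i+2})$ can both contain vertices of colour $c_{i+1}$. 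It does hold, but because of the first-hit property (e.g.\ $P_i[v_i,z_i]$ avoids $P_{i+1}$). For context, the paper does not prove this statement; it quotes it as Lemma~2 of Kriesell and Mohr. Within the paper a citation suffices, but as a self-contained argument your proposal is incomplete.
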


With this, we can further restrain vertices of degree seven.

\begin{lemma}\label{lem:seven-in-K5}
Let $G$ be a $7$-contraction-critical graph.  If $G$ is $K_7^-$-minor-free, then every vertex
of $G$ of degree seven is contained in a clique of size five.
\end{lemma}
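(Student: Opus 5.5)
Let $v$ be a vertex of degree seven in $G$ and let $H=G[N(v)]$. Since $G$ is $K_7^-$-minor-free, $G\neq K_7$, so by Theorem~\ref{thm:alpha} with $k=7$ we get $\alpha(H)\le 7-7+2=2$. Theorem~\ref{thm:spindle} then says $H$ contains either $K_4$ or the Moser spindle as a subgraph. In the first case, the $K_4$ together with $v$ is a clique of size five and we are done. So the whole task is to rule out the case where $H$ contains the Moser spindle but no $K_4$.

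Label the spindle in $H$ as in the definition: the $5$-cycle $v_1\ldots v_5$, with $v_2'$ adjacent to $v_1,v_2,v_3$ and $v_4'$ adjacent to $v_3,v_4,v_5$. Note that $\{v_1,v_2,v_2'\}$ and $\{v_3,v_4,v_4'\}$ span triangles, and so do $\{v_2,v_3,v_2'\}$ and $\{v_3,v_4',v_5\}$.

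Use criticality in the usual way. Contract the edge $vv_1$, say, or delete $v$, and take a $6$-coloring $\varphi$ of $G-v$. This exists since $G-v$ is a proper minor. Because $\chi(G)=7$, $\varphi$ uses all six colors on $N(v)$; otherwise $v$ could be colored. With seven neighbors and six colors, exactly one color repeats, on a pair of non-adjacent neighbors. The standard Kempe argument then applies: for any two neighbors $a,b$ that are the unique vertices of their colors, $a$ and $b$ are $\varphi$-Kempe-adjacent. Otherwise swapping the colors on the Kempe component of $a$ frees a color for $v$.

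The plan is to choose the repeated pair cleverly. The better option is to contract an edge $vx$ together with a non-neighbor $y$ of $x$ in $H$, i.e.\ contract the path $x v y$, which is a proper minor. This gives a $6$-coloring of $G-v$ where $x$ and $y$ share a color and the remaining five neighbors get the five other distinct colors. For example, take $x=v_1$ and $y=v_3$, which are non-adjacent if $H$ has no $K_4$ and no extra edges; I would handle any extra edges by choosing among the spindle's non-edges.

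The remaining five neighbors $v_2,v_2',v_4,v_4',v_5$ are pairwise Kempe-adjacent. Apply Theorem~\ref{thm:Kempe-cycle} to suitable cycles, or better, to all pairs. This lets me try to build a $K_5$-like rooted structure: the five singly-colored neighbors are pairwise Kempe-adjacent, which by repeated application of Theorem~\ref{thm:Kempe-cycle} and the $7$-connectivity from Theorem~\ref{thm:critical-connected} should give a rooted minor on these five vertices with many edges. Adding $v$ and the merged vertex $\{v_1,v_3\}$ through the triangles of $H$ should then produce $K_7^-$.

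The main obstacle is that Theorem~\ref{thm:Kempe-cycle} only realizes one cycle at a time, and the Kempe paths for different pairs may intersect. So I would instead aim for just the missing edges. In $H$ restricted to the five singly-colored vertices, most pairs are already edges of the spindle. I would pick $x,y$ so that the non-edges among the remaining five form a cycle, or lie on a single cycle with spindle edges, and use one application of Theorem~\ref{thm:Kempe-cycle} to realize that cycle as a rooted minor. This rooted minor is disjoint from $v$ and, after recoloring considerations, from the $\varphi$-colors of $x$ and $y$. Together with the edges of $H$, it yields a minor on $N(v)\cup\{v\}$ that contains $K_7^-$, contradicting the assumption. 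The delicate part, which I expect to be a case check over the spindle's non-edges, is verifying that a single cycle suffices and that the cycle's minor is compatible with the edges used from $H$, $v$, and the pair $x,y$.
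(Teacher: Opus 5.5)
\textbf{There is a genuine gap in the final step.} Your setup matches the paper's: Dirac gives $\alpha(H)\le 2$, Kawarabayashi--Toft gives $K_4$ or a spindle, and contracting the path $v_1vv_3$ gives a $6$-coloring of $G-v$. In that coloring $v_2,v_2',v_4,v_4',v_5$ receive distinct colors and are pairwise Kempe-adjacent. (Also, $v_1v_3\notin E(H)$ follows from $K_4$-freeness alone, since otherwise $\{v_1,v_2,v_3,v_2'\}$ is a $K_4$; no caveat about extra edges is needed.)

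\textbf{Why your endgame fails.} You want one cycle that covers the non-edges among the five singly-colored vertices. With $x=v_1$, $y=v_3$, the spindle's non-edges among $v_2,v_2',v_4,v_4',v_5$ form $K_{2,3}$ between $\{v_2,v_2'\}$ and $\{v_4,v_4',v_5\}$. Two of its vertices have degree three, so no single cycle contains it, and one application of Theorem~\ref{thm:Kempe-cycle} cannot make these five a rooted $K_5$. Now check the other non-adjacent pairs of the spindle. The leftover non-edges lie on one cycle only when $x\in\{v_2,v_2'\}$ and $y\in\{v_4,v_4'\}$, where they form a $5$-cycle. That gives $K_6$ on $v$ and the five singletons. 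But in the spindle itself each of $x,y$ has only three neighbours among those five, so the seventh branch set misses two edges at one vertex. You get $K_7^{\vee}$, not $K_7^-$. Your proposed merged vertex $\{v_1,v_3\}$ is not connected, so it cannot serve as a branch set. Hence the ``case check'' you defer does not succeed.

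\textbf{The missing idea.} Do not aim for a clique on all five singletons; use only $v_2,v_2',v_4,v_4'$.
\begin{itemize}
\item Their mutual non-edges are exactly the $4$-cycle $v_2v_4v_2'v_4'$, while $v_2v_2'$ and $v_4v_4'$ are spindle edges.
\item Restrict the coloring to the subgraph $G''$ induced by the four colors of these vertices. Kempe chains between two of these colors stay inside $G''$.
\item So Theorem~\ref{thm:Kempe-cycle} applied in $G''$ realizes this $4$-cycle, and with the two diagonal edges a $K_4$, as an $\id$-rooted minor. Its branch sets avoid $v_1,v_3$ (color $6$) and $v_5$ (the fifth color). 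This restriction is what makes your ``recoloring considerations'' precise.
\item Now take as branch sets $\{v_1,v_5\}$ (connected by the cycle edge $v_1v_5$), $\{v_3\}$, the four $K_4$ branch sets, and $\{v\}$.
\item Adjacencies: $v_1$ sees $v_2,v_2'$; $v_5$ sees $v_4,v_4'$; $v_3$ sees all four; and $v$ sees everything.
\end{itemize}
The only possibly missing edge is between $\{v_1,v_5\}$ and $v_3$, which yields $K_7^-$. This is exactly the paper's argument.
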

\begin{proof}
Consider any vertex $v\in V(G)$ of degree seven and let $H$ be the subgraph of $G$ induced by its neighbors.
By Theorem~\ref{thm:alpha}, we have $\alpha(H)\le 2$, and thus Theorem~\ref{thm:spindle}
implies that $H$ contains $K_4$ or the Moser spindle as a subgraph.  If $H$ contains $K_4$
as a subgraph, then $v$ is contained in a clique of size five.

Suppose for a contradiction that this is not the case.  Hence, we can label the vertices of $H$
as $v_1$, \ldots, $v_5$, $v'_2$, and $v'_4$ so that $v_1\ldots v_5$ is a cycle, $v'_2$ is adjacent to $v_1$, $v_2$, and $v_3$,
and $v'_4$ is adjacent to $v_3$, $v_4$, and $v_5$.  Since $K_4\not\subseteq H$, we have $v_1v_3\not\in E(H)$.
Let $G'$ be the minor of $G$ obtained by contracting the path $v_1vv_3$ to a single vertex $u$.
Since $G$ is $7$-contraction-critical, the graph $G'$ is $6$-colorable.  Let $\varphi$ be the $6$-coloring of $G-v$
obtained from a $6$-coloring of $G'$ by giving both $v_1$ and $v_3$ the color of $u$.
We can assume that $\varphi(v_1)=\varphi(v_3)=6$.  Note that $\varphi$ has to use the colors $1$, \ldots, $5$
on $V(H)$ as well, as otherwise we could extend $\varphi$ to a $6$-coloring of $G$ by giving $v$ a color not used on $V(H)$.
Hence, we can furthermore assume that $\varphi(v'_2)=1$, $\varphi(v_2)=2$, $\varphi(v'_4)=3$, $\varphi(v_4)=4$, and $\varphi(v_5)=5$.

Suppose now that there exist distinct vertices $x,y\in V(H)\setminus\{v_1,v_3\}$ such that $x$ and $y$ are not $\varphi$-Kempe-adjacent.
Equivalently, the component $C$ of $G[\varphi^{-1}(\{\varphi(x),\varphi(y)\})]$ that contains $x$ does not contain $y$.
Hence, we can obtain a $6$-coloring of $G$ from $\varphi$ by exchanging the colors $\varphi(x)$ and $\varphi(y)$ on $C$
(thus recoloring $x$ to $\varphi(y)$), then giving $v$ the color $\varphi(x)$.  This is a contradiction, and thus
the vertices $v_2$, $v'_2$, $v_4$, $v'_4$, and $v_5$ are pairwise $\varphi$-Kempe-adjacent.

Let $\psi$ be the restriction of $\varphi$ to the graph $G''=G[\varphi^{-1}(\{1,\ldots,4\})]$.
Note that the vertices $v_2$, $v'_2$, $v_4$, and $v'_4$ of $G''$ are also pairwise $\psi$-Kempe-adjacent.
By Theorem~\ref{thm:Kempe-cycle}, the rooted graph $\roots{G''}{\{v_2,v'_2,v_4,v'_4\}}$ contains the 4-cycle
$v_2v_4v'_2v'_4$ as an $\id$-rooted minor.  Since we also have $v_2v'_2, v_4v'_4\in E(G'')$, this
rooted graph actually contains $K_4$ as an $\id$-rooted minor.  By contracting $G''$ to $K_4$ on $\{v_2,v'_2,v_4,v'_4\}$
and additionally contracting the edge $v_1v_5$, we see that $\roots{G-v}{V(H)\setminus\{v_5\}}$ contains $K_6^-$
as a rooted minor, and thus $G$ contains $K_7^-$ as a minor.  This is a contradiction.
\end{proof}

Moreover, we can almost eliminate cliques of size five.
\begin{lemma}\label{lem:K5inters}
Every $K_7^-$-minor-free $7$-contraction-critical graph $G$ contains at most one clique of size five.
\end{lemma}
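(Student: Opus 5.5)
The plan is to take two distinct $5$-cliques $K$ and $K'$ in $G$ and build a $K_7^-$ minor using the $7$-connectivity from Theorem~\ref{thm:critical-connected}. (We may assume $G\neq K_7$, since otherwise $G$ contains $K_7^-$.) We split into cases by $s=|K\cap K'|\in\{0,1,2,3,4\}$.

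\emph{Large intersections.} If $s=4$, then $K\cup K'$ is a $6$-vertex set inducing $K_6^-$, which contradicts Lemma~\ref{lem:no-K6eq}. If $s=3$, let $a,b$ be the two vertices of $K\setminus K'$ and $c,d$ the two vertices of $K'\setminus K$. The set $K\cup K'$ then has $7$ vertices and induces $K_7$ with at most the four edges between $\{a,b\}$ and $\{c,d\}$ missing. Since $G$ is $7$-connected, $G-(K\cap K'\cup\{a,c\})$ has order at least $3$ and is connected, so it contains a path from $b$ to $d$ whose interior avoids $K\cup K'$. We contract this path into $b$. Next we want a path from $a$ to $c$ avoiding the previous path and $K\cap K'$. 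Menger's theorem with $7$-connectivity gives several disjoint $\{a,b\}$--$\{c,d\}$ paths outside $K\cap K'$, and in fact we can find two disjoint paths $a$--$c$ and $b$--$d$, or $a$--$d$ and $b$--$c$, in $G-(K\cap K')$. This works because removing only $3$ vertices leaves a $4$-connected graph, and $4$-connected graphs contain two disjoint paths linking $\{a,b\}$ to $\{c,d\}$ in some pairing. Contracting both paths restores two of the four missing edges, and the result is $K_7^-$ minus at most one more edge. To get all but one edge we pick the third contraction carefully: remove $K\cap K'\cup\{a,b\}$ and use the connectivity of the rest to join one more cross pair. The precise bookkeeping is the delicate point here.

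\emph{Small intersections.} For $s\le 2$ we want $|K\cup K'|\ge 8$ and aim for $K_7^-$ directly. Take $K$, and use $7$-connectivity of $G$ to get five disjoint paths from $K$ to $K'$ (for $s>0$, use trivial paths at shared vertices). Contract these so that $K'$ supplies, for each vertex of $K$, a distinct ``partner'' bag. This gives a $K_5$ on $K$ in which each vertex additionally sees a private vertex of $K'$. Combined with the clique on $K'$, this yields a $K_{10}$-like structure, and contracting matched pairs gives a $K_5$ that we would like to enlarge. The cleaner route is to contract $G-K$ into pieces. Since $G-K$ is connected ($7$-connectivity) and every vertex of $K$ has at least $2$ neighbours outside $K$ (degree at least $7$ by Theorem~\ref{thm:alpha}), we want to split $G-K$ into two connected subgraphs $P,Q$ each adjacent to all but at most one vertex of $K$, with $P$ adjacent to $Q$. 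That is exactly a $K_7^-$ (or a supergraph). Use $K'\setminus K$, which has at least $3$ vertices and forms a clique, as the seed for $P$: take $P$ to be a vertex $w\in K'\setminus K$ together with paths to the vertices of $K$, and let $Q$ be the remainder.

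The main obstacle is making this split rigorous. The approach I would try first is Menger's theorem applied in $G-K$ from $K'\setminus K$, together with the $2$-neighbour condition, to obtain two disjoint connected sets each dominating $K$ up to one vertex. If that fails, the fallback is to use the degree-$\ge 7$ condition and Lemma~\ref{lem:no-K6eq} to restrict the local structure further.
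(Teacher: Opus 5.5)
Your proposal has a genuine gap: in neither case do you actually produce a $K_7^-$, and the structures you aim for are too weak.

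\textbf{The case $s=3$.} Contract the two disjoint paths in $G-(K\cap K')$, say $a$--$c$ and $b$--$d$, to edges. The seven branch sets are then pairwise adjacent except possibly for $ad$ and $bc$. These two edges are independent, so you only have $K_7^{=}$. The missing adjacency is deferred to ``bookkeeping''. Your suggested remedy is to delete $(K\cap K')\cup\{a,b\}=K$ and join ``one more cross pair''. This does not control which pair of branch sets the new path connects. In $G-K$ the path may run from the $a$-side to the $b$-side, or from $c$ to $d$, and those pairs are already adjacent.

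\textbf{The case $s\le 2$.} Your target is insufficient even if achieved. Take $K$ together with two adjacent connected sets $P,Q$, each missing at most one vertex of $K$. This guarantees only $19$ of the $21$ adjacencies, so it may be $K_7^{=}$ or $K_7^{\vee}$ rather than $K_7^-$. You would need one of $P,Q$ to see all of $K$. Moreover, you give no construction of $P$ and $Q$. Seeding $P$ at some $w\in K'\setminus K$ guarantees neither that the remainder $Q$ is connected nor that it sees four vertices of $K$. This route also essentially discards the second clique, which is exactly what the lemma relies on.

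\textbf{The missing idea.} Your $s=4$ case via Lemma~\ref{lem:no-K6eq} is fine. For all $s\le 3$ at once, no case split is needed. Use $K'$ as the far end of a linkage, then obtain the last adjacency from a path in the complement of a carefully chosen $5$-set.
\begin{enumerate}
\item By Menger and $7$-connectivity, take five disjoint $K$--$K'$ paths $P_1,\dots,P_5$, where $P_1,\dots,P_s$ are the trivial paths on $K\cap K'$.
\item Let $P_5$ run from $u\in K$ to $v\in K'$, and put $Z=(K\setminus\{u\})\cup\{v\}$.
\item Since $G-Z$ is connected, it contains a path from $u$ to $K'\setminus Z$. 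Every vertex of $K'\setminus Z$ is the $K'$-end of one of $P_{s+1},\dots,P_4$.
\item A minimal segment $Q$ of this path joins $P_5$ to one of these paths, say $P_4$ with $K$-end $u'$. The segment $Q$ avoids $u'$, $v$ and $P_1,\dots,P_s$, because all of them lie in $Z$.
\item Contract $P_{s+1},\dots,P_3$, $P_4-u'$ and $P_5-v$ to single vertices, and contract $Q$ to an edge.
\end{enumerate}
This yields the branch sets $P_1,P_2,P_3,\{u'\},P_4-u',P_5-v,\{v\}$. They are pairwise adjacent except possibly $u'$ and $v$, which gives $K_7^-$.

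For $s=3$ this is precisely your two-path picture. The only change is that the third path must be found in $G-((K\cap K')\cup\{a,d\})$, not in $G-K$. That is, you delete the $K$-end of one linkage path and the $K'$-end of the other.
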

\begin{proof}
Suppose for a contradiction that there exist two distinct sets $L_1,L_2\subset V(G)$ of size five inducing cliques in $G$.
Let $k=|L_1\cap L_2|$.  By Lemma~\ref{lem:no-K6eq}, we have $k\le 3$.
Since $G$ is $7$-connected by Theorem~\ref{thm:critical-connected}, Menger's theorem implies that $G$ contains
five pairwise vertex-disjoint paths $P_1$, \ldots, $P_5$ from $L_1$ to $L_2$.  We can choose the labels so that
$P_1$, \ldots, $P_k$ are the single-vertex paths covering the intersection $L_1\cap L_2$.

Let $u$ and $v$ be the ends of the path $P_5$, where $u\in L_1$ and $v\in L_2$, and let $Z=(L_1\setminus \{u\})\cup \{v\}$.
Since $G$ is $7$-connected, the graph $G-Z$ is connected and contains a path $Q_0$ from $u$ to $L_2\setminus Z$.
Since $(L_1\cap L_2)\cup\{v\}\subset Z$, the end of $Q_0$ in $L_2$ lies on one of the paths $P_{k+1}$, \ldots, $P_4$.
Let $Q$ be a minimal segment of $Q$ with one end on $P_5$ and the other end in $V(P_{k+1}\cup \cdots\cup P_4)$.
Without loss of generality, we can assume that $Q$ ends on $P_4$.  Let $u'$ be the end of $P_4$ in $L_1$.
Note that since $u',v\in Z$, we have $u',v\not\in V(Q)$.

We now contract each of the paths $P_{k+1}$, \ldots, $P_3$, $P_4-u'$, and $P_5-v$ to a single vertex,
then contact the path $Q$ to a single edge.  This way, we obtain $K_7^-$ as a minor of $G$, which is a contradiction.
\end{proof}

\noindent This gives us Theorem~\ref{thm:counterdens}.
\begin{proof}[Proof of Theorem~\ref{thm:counterdens}]
Let $v$ be an arbitrary vertex of $G$.  The proper minor $G-v$ of $G$ is $6$-colorable by the assumptions, and thus
the graph $G$ is $7$-colorable.  Therefore, $G$ has chromatic number exactly seven, and consequently it is $7$-contraction-critical.
Since $K_7^-$ is not a minor of $G$, we have $G\neq K_7$, and thus Theorem~\ref{thm:critical-connected} implies that $G$
is $7$-connected.  In particular, $G$ has minimum degree at least seven.

Moreover, $G$ has at most one clique of size five by Lemma~\ref{lem:K5inters},
and every vertex of $G$ of degree seven is contained in it by
Lemma~\ref{lem:seven-in-K5}.  Therefore, $G$ has at most five vertices of
degree seven.  Consequently,
$$|E(G)|\ge \Bigl\lceil\frac{8|V(G)|-5}{2}\Bigr\rceil=4|V(G)|-2.$$
\end{proof}

\noindent Showing that $\target$-minor-free graphs are $6$-colorable is now trivial.

\begin{proof}[Proof of Theorem~\ref{thm:coloring}]
Suppose for a contradiction that there exists a $\target$-minor-free graph $G$ of chromatic number at least $7$,
and choose one with $|V(G)|+|E(G)|$ minimal.  It follows that every proper minor of $G$ is $6$-colorable.
Since $G$ clearly does not contain $K_7^-\supset\target$ as a minor, Theorem~\ref{thm:counterdens}
implies that $G$ is $7$-connected and $|E(G)|\ge 4|V(G)|-2$.
However, Theorem~\ref{thm:denstarget} then implies that $G$ contains $\target$ as a minor, which is a contradiction.
\end{proof}

\bibliographystyle{alpha}
\bibliography{main}

\end{document}